
\documentclass[onecolumn,draftclsnofoot]{IEEEtran}
\usepackage{multirow}
\usepackage{graphicx}
\usepackage{amsmath}
\usepackage{amsthm}
\usepackage{mathtools}
\usepackage[psamsfonts]{amssymb}
\usepackage{amsxtra}
\usepackage{hyperref}
\usepackage{paralist}
\usepackage{threeparttable}
\usepackage{cleveref}
\usepackage{cite}
\usepackage{color}
\usepackage[justification=centering]{caption}

\newtheorem{Theorem}{Theorem}[section]
\newtheorem{Lemma}[Theorem]{Lemma}

\newtheorem{Remark}[Theorem]{Remark}
\theoremstyle{definition}
\newtheorem{assump}{Assumption}

\newenvironment{myassump}[2][]
{\begin{assump}[#1]}
	{\end{assump}}

\allowdisplaybreaks








\title{\LARGE \bf
Communication-Efficient Distributed Strongly Convex Stochastic Optimization: Non-Asymptotic Rates
}


\author{Anit~Kumar~Sahu, Dusan~Jakovetic, Dragana~Bajovic,  and Soummya~Kar
\thanks{The work of DJ and DB was supported in part by the European Union~(EU) Horizon 2020 project I-BiDaaS, project number~{780787}. The work of D. Jakovetic was also supported in part by the Serbian Ministry of Education, Science, and Technological Development, grant 174030. The work of AKS and SK was supported in part by National Science Foundation under grant CCF-1513936. D. Bajovic is with University of Novi Sad, Faculty of Technical Sciences, Department of Power, Electronics and Communication Engineering 21000 Novi Sad, Serbia {\tt\small dbajovic@uns.ac.rs}. D. Jakovetic is with University of Novi Sad, Faculty of Sciences, Department of Mathematics and Informatics 21000 Novi Sad, Serbia {\tt\small djakovet@uns.ac.rs}.
A. K. Sahu and S. Kar are with the Department of Electrical and Computer Engineering, Carnegie Mellon University, Pittsburgh, PA 15213 {\tt\small \{anits,soummyak\}@andrew.cmu.edu}.}%
}

\begin{document}

\maketitle
\thispagestyle{empty}
\pagestyle{empty}

\begin{abstract}
We examine fundamental tradeoffs in iterative distributed zeroth and first order stochastic optimization in multi-agent networks in terms of \emph{communication cost} (number of per-node transmissions) and \emph{computational cost}, measured by the number of per-node noisy function (respectively, gradient) evaluations with zeroth order (respectively, first order) methods. Specifically, we develop
novel distributed stochastic optimization methods for zeroth and first order strongly convex optimization by utilizing a probabilistic inter-agent communication protocol that increasingly sparsifies communications among agents as time progresses.
Under standard assumptions on the cost functions and the noise statistics, we establish with the proposed method the $O(1/(C_{\mathrm{comm}})^{4/3-\zeta})$ and $O(1/(C_{\mathrm{comm}})^{8/9-\zeta})$ mean square error convergence rates, for the first and zeroth order optimization, respectively, where $C_{\mathrm{comm}}$ is the expected number of network communications and $\zeta>0$ is arbitrarily small. The methods are shown to achieve order-optimal convergence rates in terms of computational cost~$C_{\mathrm{comp}}$, $O(1/C_{\mathrm{comp}})$ (first order optimization) and $O(1/(C_{\mathrm{comp}})^{2/3})$ (zeroth order optimization), while achieving the order-optimal convergence rates in terms of iterations. Experiments on real-life datasets illustrate the efficacy of the proposed algorithms.
\end{abstract}


\section{Introduction}
\label{section-intro}
\noindent Stochastic optimization has taken a central role in problems of learning and inference making over large data sets. Many practical setups are inherently distributed in which, due to sheer data size, it may not be feasible to store data in a single machine or agent. Further, due to the complexity of the objective functions (often, loss functions in the context of learning and inference problems), explicit computation of gradients or exactly evaluating the objective at desired arguments could be computationally prohibitive. The class of stochastic optimization problems of interest can be formalized in the following way:
{\small\begin{align*}
\min f(\mathbf{x}) = \min \mathbb{E}_{\boldsymbol{\xi}\sim P}\left[F(\mathbf{x};\boldsymbol{\xi})\right],
\end{align*}}
\noindent where the information available to implement an optimization scheme usually involves gradients, i.e., $\nabla F(\mathbf{x};\boldsymbol{\xi})$ or function values of $F(\mathbf{x};\boldsymbol{\xi})$ itself. However, both the gradients and the function values are only unbiased estimates of the gradients and the function values of the desired objective $f(\mathbf{x})$. Moreover, due to huge data sizes and distributed applications, the data is often split across different agents, in which case the (global) objective reduces to the sum of $N$ local objectives, $F(\mathbf{x};\boldsymbol{\xi})=\sum_{i=1}^{N}F_{i}(\mathbf{x};\boldsymbol{\xi})$, where $N$ denotes the number of agents. Such kind of scenarios are frequently encountered in setups such as empirical risk minimization in statistical learning\cite{vapnik1998statistical}. In order to address the aforementioned problem setup, we study zeroth and first order distributed stochastic strongly convex optimization over networks.\\ \noindent There are $N$ networked nodes, interconnected through a preassigned possibly sparse communication graph, that collaboratively aim to minimize the sum of their locally known strongly convex costs. We focus on zeroth and first order distributed stochastic optimization methods, where at each time instant (iteration)~$k$, each node queries a stochastic zeroth order oracle~($\mathcal{SZO}$) for a noisy estimate of its local function's value at the current iterate (zeroth order optimization), or a stochastic first order oracle~($\mathcal{SFO}$) for a noisy estimate of its local function's gradient (first order optimization). In both of the proposed stochastic optimization methods, an agent updates its iterate at each iteration by simultaneously assimilating information obtained from the neighborhood~(consensus) and the queried information from the relevant oracle ~(innovations). In the light of the aforementioned distributed protocol, our focus is then on examining the tradeoffs between the \emph{communication cost}, measured by the number of per-node transmissions to their neighboring nodes in the network; and \emph{computational cost}, measured by the number of queries made to $\mathcal{SZO}$ (zeroth order optimization) or $\mathcal{SFO}$ (first order optimization).\\
\noindent \textbf{Contributions}. Our main contributions are as follows. We develop novel methods for zeroth and first order distributed stochastic optimization, based on a probabilistic inter-agent communication protocol that increasingly sparsifies agent communications over time. For the proposed zeroth order method, we establish the $O(1/(C_{\mathrm{comm}})^{8/9-\zeta})$ mean square error (MSE)
convergence rate in terms of communication cost~$C_{\mathrm{comm}}$, where $\zeta>0$ is arbitrarily small. At the same time, the method achieves the order-optimal
$O(1/(C_{\mathrm{comp}})^{2/3})$ MSE rate in terms of computational cost~$C_{\mathrm{comp}}$ in the context of strongly convex functions with second order smoothness. For the first order distributed stochastic optimization, we propose a novel method that is shown to achieve the $O(1/(C_{\mathrm{comm}})^{4/3-\zeta})$ MSE communication rate. At the same time, the proposed method retains the order-optimal $O(1/(C_{\mathrm{comp}}))$ MSE rate in terms of the computational cost, the best achievable rate in the corresponding centralized setting.\\
 \noindent The achieved results reveal an interesting relation between the zeroth and first order distributed stochastic optimization. Namely, as we show here, the zeroth order method achieves a slower MSE communication rate than the first order method due to the (unavoidable) presence of bias in nodes' local functions' gradient estimation. Interestingly, increasing the degree of smoothness\footnote{Degree of smoothness $p$ refers to the function under consideration being $p$-times continuously differentiable with the $p$-th order derivative being Lipschitz continuous.} $p$ in cost functions coupled with a fine-tuned gradient estimation scheme, adapted to the smoothness degree, effectively reduces the bias and enables the zeroth order optimization mean square error to scale as $O(1/(C_{\mathrm{comp}})^{(p-1)/p})$. Thus, with increased smoothness and appropriate gradient estimation schemes, the zeroth order optimization scheme gets increasingly close in mean square error of its first order counterpart. In a sense, we demonstrate that the first order (bias-free) stochastic optimization corresponds to the limiting case of the zeroth order stochastic optimization when $p \rightarrow \infty$. \\
\noindent In more detail, the proposed distributed communication efficient stochastic methods work as follows. They utilize an increasingly sparse communication protocol that we recently proposed in the context of distributed estimation problems \cite{sahu2018communication}. Therein, at each time step (iteration)~$k$,
each node participates in the communication protocol with its
immediate neighbors with a time-decreasing probability~$p_k$.
The probabilities of communicating are equal
across all nodes, while the nodes' decisions whether
to communicate or not are independent of the
past and of the other nodes.
 Upon the transmission stage, if active, each node makes a weighted average
 of its own solution estimate
 and the solution estimates
 received from all
 of its communication-active (transmitting) neighbors,
 assigning to each neighbor a
 time-varying weight~$\beta_k$.
In conjunction with the averaging step,
 the nodes in parallel assimilate the obtained neighborhood information and the local information through a local gradient approximation step -- based on the noisy functions estimates only -- with step-size $\alpha_k$.\\
 \noindent By structure, the proposed distributed zeroth and first order stochastic methods are of a similar
 nature, expect for the fact that
 rather than approximating local gradients based on the noisy functions estimates in the zeroth order case, the first order setup assumes
 noisy gradient estimates are directly available.\\ 
\noindent\textbf{Brief literature review}.
 We now briefly review the literature to help us
contrast this paper from prior work.
 In the context of the extensive literature on distributed
 optimization, the most relevant to our work
 are the references that fall within the following three classes of works: 1)
 distributed strongly convex stochastic optimization;
 2) distributed optimization over random networks (both
 deterministic and stochastic methods); and 3)
 distributed optimization methods
 that aim to improve communication efficiency.
\noindent  While we pursue stochastic optimization in this paper, the case of deterministic noiseless distributed optimization has seen much progress (\cite{BoydADMoM,WotaoYinExtra,WotaoYinDisGrad,SayedExact1}) and more recently accelerated methods (\cite{arxivVersion,UsmanDextra}). For the first class of works,
 several papers give explicit
 convergence rates in terms of the iteration counter~$k$, that here translates
 into computational cost~$C_{\mathrm{comp}}$ or equivalently number of queries to $\mathcal{SZO}$ or $\mathcal{SFO}$, under different assumptions.
 Regarding the underlying network,
 references~\cite{RabbatDistributedStronglyCVX,SayedStochasticOpt}
 consider static networks, while the works~\cite{DistributedMirrorDescent,Kozat,NedicStochasticPush}
  consider deterministic time-varying networks. They all consider
  \emph{first order} optimization.

\noindent References~\cite{RabbatDistributedStronglyCVX,SayedStochasticOpt} consider distributed first order strongly convex optimization for static networks, assuming that the data distributions that underlie each node's local cost function are equal (reference \cite{RabbatDistributedStronglyCVX} considers empirical risks while reference \cite{SayedStochasticOpt} considers risk functions in the form of expectation); this essentially corresponds to each nodes' local function having the same minimizer. References~\cite{DistributedMirrorDescent,Kozat,NedicStochasticPush} consider deterministically varying networks, assuming that the ``union graph'' over finite windows of iterations is connected. The papers~\cite{RabbatDistributedStronglyCVX,SayedStochasticOpt,DistributedMirrorDescent,Kozat} assume undirected networks, while \cite{NedicStochasticPush} allows for
directed networks and assumes a bounded support for the gradient noise. The works~\cite{RabbatDistributedStronglyCVX,DistributedMirrorDescent,Kozat,NedicStochasticPush}
allow the local costs to be non-smooth, while \cite{SayedStochasticOpt} assumes smooth costs, as we do here. With respect to these works, we consider random networks (that are undirected and connected on average), smooth costs, and allow the noise to have unbounded support. The authors of~\cite{hajinezhad2017zeroth} propose a distributed zeroth optimization algorithm for non-convex minimization with a static graph, where a random directions-random smoothing approach was employed.

\noindent For the second class of works, distributed optimization over random networks has been studied in \cite{AsuRandom2,ASU_Math_Prog,randomNesterov}. References \cite{AsuRandom2,ASU_Math_Prog} consider non-differentiable convex costs, first order methods, and no (sub)gradient noise, while reference \cite{randomNesterov} considers differentiable costs with Lipschitz continuous and bounded gradients, first order methods,  and it also does not allow for gradient noise, i.e., it considers methods with exact (deterministic) gradients.
Reference~\cite{CDCSGD2018} considers distributed stochastic first order methods and establishes the method's $O(1/k)$ convergence rate. 
References~\cite{CDCKW2018} considers a zeroth order distributed stochastic approximation method, which queries the $\mathcal{SZO}$ $2d$ times at each iteration where $d$ is the dimension of the optimizer and establishes the method's $O(1/k^{1/2})$ convergence rate in terms of the number of iterations under first order smoothness. 

\noindent In summary, each of the references in the two classes above is not primarily concerned with studying communication rates of distributed stochastic methods. Prior work achieves order-optimal rates in terms of computational cost (that translates here into the number of iterations~$k$), both for the zeroth order, e.g., \cite{CDCKW2018}, and for the first order, e.g., \cite{CDCSGD2018}, distributed strongly convex optimization.\footnote{The works in the first two classes above utilize a non-diminishing amount of communications across iterations, and hence they achieve at best the $O(1/(C_{\mathrm{comm}}))$ (first order optimization) and $O(1/(C_{\mathrm{comm}})^{1/2})$ communication rates.}In contrast, we establish here communication rates as well. This paper and our prior works \cite{CDCKW2018,GlobalSIP2018} distinguish further from other works on distributed zeroth order optimization, e.g.,~\cite{hajinezhad2017zeroth,duchi2015optimal}, in that, not only the gradient is approximated through function values due to the absence of first order information, but also the function values themselves are subject to noise. Reference \cite{GlobalSIP2018} considers a communication efficient zeroth order approximation scheme, where the convergence rate is established to be $O(1/k^{1/2})$ and the MSE-communication is improved to $O(1/(C_{\mathrm{comm}})^{2/3-\zeta})$. In contrast to \cite{GlobalSIP2018}, with additional smoothness assumptions we improve the convergence rate to $O(1/k^{2/3})$ and the MSE-communication is further improved to $O(1/(C_{\mathrm{comm}})^{8/9-\zeta})$.

\noindent Finally, we review the class of works that are concerned with designing distributed methods that achieve communication efficiency, e.g.,~\cite{tsianos2012communication,tsianos2013networked,jakovetic2016distributed,lan2017communication,wang2016decentralized,sahu2018communication,sahu2018communicationNL}. In \cite{wang2016decentralized}, a data censoring method is employed in the context of distributed least squares estimation to reduce computational and communication costs. However, the communication savings in \cite{wang2016decentralized} are a constant proportion with respect to a method which utilizes all communications at all times, thereby not improving the order of the convergence rate. References~\cite{tsianos2012communication,tsianos2013networked,jakovetic2016distributed} also consider a different setup than we do here, namely they study
distributed optimization where the data is available a priori (i.e., it is not streamed). This corresponds to an intrinsically different setting with respect to the one studied here, where actually geometric MSE convergence rates are attainable with stochastic-type methods, e.g., \cite{mokhtari2016dsa}. In terms of the strategy to save communications, references~\cite{tsianos2012communication,tsianos2013networked,jakovetic2016distributed,lan2017communication} consider, respectively, deterministically increasingly sparse communication, an adaptive communication scheme, and selective activation of agents. These strategies are different from ours; we utilize randomized, increasingly sparse communications in general.
In references~\cite{sahu2018communication,sahu2018communicationNL}, we study distributed estimation problems and develop communication-efficient distributed estimators. The problems studied in~\cite{sahu2018communication,sahu2018communicationNL} have a major difference with respect to the current paper in that, in~\cite{sahu2018communication,sahu2018communicationNL}, the assumed setting yields individual nodes' local gradients to evaluate to zero at the global solution. In contrast, the model assumed here does not feature such property, and hence it is more challenging. 

\noindent Finally, we comment on the recent paper~\cite{lan2017communication} that develops communication-efficient distributed methods for both non-stochastic and stochastic distributed first order optimization, both in the presence and in the absence of the strong convexity assumption. For the stochastic, strongly convex first order optimization, \cite{lan2017communication} shows that the method therein gets $\epsilon$-close to the solution in $O(1/\sqrt{\epsilon})$ communications and with an $O(1/\epsilon)$ computational cost. The current paper has several differences
with respect to~\cite{lan2017communication}. First, reference~\cite{lan2017communication} does not study zeroth order optimization. Second, this work
assumes for the gradient noise to be independent of the algorithm iterates. This is a strong assumption that may be not satisfied, e.g., with many machine learning applications.
Third, while we assume here twice differentiable costs, this assumption is not imposed in~\cite{lan2017communication}. Finally, the method in~\cite{lan2017communication} is considerably more complex than the one proposed here, with two layers of iterations (inner and outer iterations). In particular, the inner iterations involve solving an exact minimization problem which necessarily points to the usage of an off-the-shelf solver, the computation cost of which is not factored into the computation cost in \cite{lan2017communication}.

\noindent \textbf{Paper organization}. The next paragraph introduces notation.
Section~\ref{sec:model} describes the model and the proposed algorithms for zeroth and first order
distributed stochastic optimization. Section~\ref{sec:main_results} states our convergence rates results for the two methods.
Sections~\ref{sec:proof_main_res_0} and~\ref{sec:proof_main_res_1}  provide proofs for the zeroth and first order methods,
respectively. Section~\ref{sec:sim} demonstrates communication efficiency
of the proposed methods through numerical examples. Finally,
we conclude in Section~\ref{sec:conc}.

\noindent\textbf{Notation}.
We denote by $\mathbb R$ the set of real numbers and by ${\mathbb R}^m$ the $m$-dimensional
Euclidean real coordinate space. We use normal lower-case letters for scalars,
lower case boldface letters for vectors, and upper case boldface letters for
matrices. Further, we denote by: $\mathbf{A}_{ij}$ the entry in the $i$-th row and $j$-th column of
a matrix $\mathbf{A}$;
$\mathbf{A}^\top$ the transpose of a matrix $A$; $\otimes$ the Kronecker product of matrices;
$I$, $0$, and $\mathbf{1}$, respectively, the identity matrix, the zero matrix, and the column vector with unit entries; $\mathbf{J}$ the $N \times N$ matrix $J:=(1/N)\mathbf{1}\mathbf{1}^\top$.
When necessary, we indicate the matrix or vector dimension as a subscript.
Next, $A \succ  0 \,(A \succeq  0 )$ means that
the symmetric matrix $A$ is positive definite (respectively, positive semi-definite). For a set $\mathcal{X}$, $|\mathcal{X}|$ denotes the cardinality of set $\mathcal{X}$.
We denote by:
$\|\cdot\|=\|\cdot\|_2$ the Euclidean (respectively, induced) norm of its vector (respectively, matrix) argument; $\lambda_i(\cdot)$, the $i$-th smallest eigenvalue of its matrix argument; $\nabla h(w)$ and $\nabla^2 h(w)$ the gradient and Hessian, respectively, evaluated at $w$ of a function $h: {\mathbb R}^m \rightarrow {\mathbb R}$, $m \geq 1$; $\mathbb P(\mathcal A)$ and $\mathbb E[u]$ the probability of
an event $\mathcal A$ and expectation of a random variable $u$, respectively. By $\mathbf{e}_{j}$ we denote the $j$-th column of the identity matrix $\mathbf{I}$ where the dimension is made clear from the context.
Finally, for two positive sequences $\eta_n$ and $\chi_n$, we have: $\eta_n = O(\chi_n)$ if
$\limsup_{n \rightarrow \infty}\frac{\eta_n}{\chi_n}<\infty$.

\section{Model and the proposed algorithms}
\label{sec:model}
\noindent The network of $N$ agents in our setup collaboratively aim to solve the following unconstrained problem:
{\small\begin{align}
	\label{eq:opt_problem}
	\min_{\mathbf{x}\in\mathbb{R}^{d}}\sum_{i=1}^{N}f_{i}(\mathbf{x}),
	\end{align}}
where $f_{i}: \mathbb{R}^{d}\mapsto\mathbb{R}$ is a strongly convex
function available to node $i$, $i=1,...,N$. We make the following assumption on the functions $f_{i}(\cdot)$:

\begin{myassump}{1}
	\label{as:1}
	For all $i=1,...,N$, function
	$f_{i}: \mathbb{R}^{d}\mapsto\mathbb{R}$ is twice continuously differentiable with Lipschitz continuous gradients. In particular, there exist constants $L, \mu > 0$ such that for all $\mathbf{x} \in {\mathbb R}^d$, 
	{\small\begin{align*}
		\mu\ \mathbf{I} \preceq \nabla^2 f_i(\mathbf{x}) \preceq L~\mathbf{I}.
		\end{align*}}
\end{myassump}

\noindent From Assumption~\ref{as:1} we have that each $f_i$, $i=1,\cdots,N$, is $\mu$-strongly convex. Using standard properties of strongly convex functions, we have for any $\mathbf{x,y} \in {\mathbb R}^d$:
{\small\begin{align*}
	&f_i(\mathbf{y}) \geq f_i(\mathbf{x})+ \nabla f_i(\mathbf{x})^\top \,(\mathbf{y}-\mathbf{x})+ \frac{\mu}{2}\|\mathbf{x}-\mathbf{y}\|^2,\\
	&\|\nabla f_i(\mathbf{x}) -\nabla f_i(\mathbf{y})\| \leq L\,\|\mathbf{x}-\mathbf{y}\|.
	\end{align*}}
\noindent We also have that from assumption \ref{as:1}, the optimization problem in \eqref{eq:opt_problem} has a unique solution, which we denote by $\mathbf{x}^{\ast}\in\mathbb{R}^{d}$. Throughout the paper, we use the sum function which is defined as $f:\,{\mathbb R}^d \rightarrow \mathbb R$,~$f(\mathbf{x})=\sum_{i=1}^N f_i(\mathbf{x})$. We consider distributed stochastic
gradient methods to solve~\eqref{eq:opt_problem}.
That is, we study algorithms of the following form:
{\small\begin{align}
\label{eq:opt_strategy}
&\mathbf{x}_i(k+1)=\mathbf{x}_i(k) - \sum_{j \in \Omega_i(k)}\gamma_{i,j}(k)\left( \mathbf{x}_i(k) - \mathbf{x}_j(k) \right)\nonumber\\
&-\alpha_k\widehat{\mathbf{g}}_{i}(\mathbf{x}_{i}(k)),
\end{align}}
where the weight assigned to an incoming message $\gamma_{i,j}(k)$ and the neighborhood of an agent $\Omega_i(k)$ are determined by the specific instance of the designated communication protocol. The approximated gradient $\widehat{\mathbf{g}}_{i}(\mathbf{x}_{i}(k))$ is specific to the optimization, i.e., whether it is a zeroth order optimization or a first order optimization scheme. Technically speaking, as we will see later, a zeroth order optimization scheme approximates the gradient as a biased estimate of the gradient while a first order optimization scheme approximates the gradient as an unbiased estimate of the gradient. The variation in the gradient approximation across first order and zeroth order methods can be attributed to the fact that the oracles from which the agents query for information pertaining to the loss function differ. For instance, in the case of the zeroth order optimization, the agents query a stochastic zeroth order oracle~($\mathcal{SZO}$) and in turn receive noisy function values~(unbiased estimates) for the queried point. However, in the case of first order optimization, the agents query a stochastic first order oracle~($\mathcal{SFO}$) and receive unbiased estimates of the gradient. In subsequent sections, we will explore the gradient approximations in greater detail.
Before stating the algorithms, we first discuss the communication scheme.
\noindent Specifically,
we adopt the following model.
\noindent\subsubsection{Communication Scheme}
\noindent The inter-node communication network to which the information exchange between nodes conforms to is modeled as an \emph{undirected} simple connected graph $G=(V,E)$, with $V=\left[1\cdots N\right]$ and~$E$ denoting the set of nodes and communication links. The neighborhood of node~$n$ is given by $\Omega_{n}=\left\{l\in V\,|\,(n,l)\in E\right\}$. The node~$n$ has degree $d_{n}=|\Omega_{n}|$. The structure of the graph is described by the  $N\times N$ adjacency matrix, $\mathbf{A}=\mathbf{A}^\top=\left[\mathbf{A}_{nl}\right]$, $\mathbf{A}_{nl}=1$, if $(n,l)\in E$, $\mathbf{A}_{nl}=0$, otherwise. The graph Laplacian $\overline{\mathbf{R}}=\mathbf{D}-\mathbf{A}$ is positive semidefinite, with eigenvalues ordered as $0=\lambda_{1}(\overline{\mathbf{R}}) \leq\lambda_{2}(\overline{\mathbf{R}}) \leq\cdots \leq \lambda_{N}(\overline{\mathbf{R}})$, where $\mathbf{D}$ is given by $\mathbf{D}=\mbox{diag}\left(d_{1}\cdots d_{N}\right)$. 
\noindent We make the following assumption on $\overline{\mathbf{R}}$.
\begin{myassump}{2}
	\label{as:2}
	\label{assumption-network}
	The inter-agent communication graph is connected on average, i.e., $\overline{\mathbf{R}}$ is connected. In other words, $\lambda_{2}(\mathbf{\overline{\mathbf{R}}}) > 0$.
\end{myassump}
\noindent Thus, $\overline{\mathbf{R}}$ corresponds to the maximal graph, i.e., the graph of all \emph{allowable} communications. We now describe our randomized communication protocol that selects a (random) subset of the allowable links at each time instant for information exchange.\\
\noindent For each node $i$, at every time $k$, we introduce a binary random variable $\psi_{i,k}$, where
{\small\begin{align}
	\label{eq:dec_rule_lin_1}
	\psi_{i,k}=
	\begin{cases}
	\rho_{k} &~~\textit{with~probability}~\zeta_{k}\\
	0 & ~~\textit{otherwise},
	\end{cases}
	\end{align}}
where $\psi_{i,k}$'s are independent both across time and the nodes, i.e., across $k$ and $i$ respectively. The random variable $\psi_{i,k}$ abstracts out the decision of the node $i$ at time $k$ whether to participate in the neighborhood information exchange or not. We specifically take $\rho_{k}$ and $\zeta_{k}$ of the form
{\small\begin{align}
	\label{eq:time_decay}
	\rho_{k} = \frac{\rho_{0}}{(k+1)^{\epsilon/2}},~\zeta_{k} = \frac{\zeta_{0}}{(k+1)^{(\tau/2-\epsilon/2)}},
	\end{align}}
where $0<\tau\leq \frac{1}{2}$ and $0<\epsilon<\tau$. Furthermore, define $\beta_{k}$ to be
{\small\begin{align}
	\label{eq:beta}
	\beta_{k}=\left(\rho_{k}\zeta_{k}\right)^{2} = \frac{\beta_{0}}{(k+1)^{\tau}},
	\end{align}}
\noindent where $\beta_0=\rho_0^2\zeta_0^2$. 
\noindent With the above development in place, we define the random time-varying Laplacian $\mathbf{R}(k)$, where $\mathbf{R}(k)\in\mathbb{R}^{N\times N}$ abstracts the inter-node information exchange as follows:
{\small\begin{align}
\label{eq:laplacian}
\mathbf{R}_{i,j}(k)=
\begin{cases}
-\psi_{i,k}\psi_{j,k} & \{i,j\}\in E, i\neq j\\
0 & i\neq j, \{i,j\}\notin E\\
\sum_{l\neq i}\psi_{i,k}\psi_{l,k}& i=j.
\end{cases}
\end{align}}
\noindent The above communication protocol allows two nodes to communicate only when the link is established in a bi-directional fashion and hence avoids directed graphs. The design of the communication protocol as depicted in \eqref{eq:dec_rule_lin_1}-\eqref{eq:laplacian} not only decays the weight assigned to the links over time but also decays the probability of the existence of a link. Such a design is consistent with frameworks where the agents have finite power and hence not only the number of communications, but also, the quality of the communication decays over time.
\noindent We have, for $\{i,j\}\in E$ and $i\neq j$: 
{\small\begin{align}
\label{eq:expectation}
&\mathbb{E}\left[\mathbf{R}_{i,j}(k)\right]= -\left(\rho_{k}\zeta_{k}\right)^{2} = -\beta_{k} = -\frac{\beta_0}{(k+1)^{\tau}}\nonumber\\
&\mathbb{E}\left[\mathbf{R}_{i,j}^{2}(k)\right] = \left(\rho_{k}^{2}\zeta_{k}\right)^{2} = \frac{\rho_0^{2}\beta_0}{(k+1)^{\tau+\epsilon}}.
\end{align}}
\noindent Thus, we have that, the variance of $\mathbf{R}_{i,j}(k)$ is given by,
{\small\begin{align}
\label{eq:variance}
\textit{Var}\left(\mathbf{R}_{i,j}(k)\right) = \frac{\beta_{0}\rho_{0}^{2}}{(k+1)^{\tau+\epsilon}} - \frac{\beta_0^{2}}{(k+1)^{2\tau}}.
\end{align}}
\noindent Define, the mean of the random time-varying Laplacian sequence $\{\mathbf{R}(k)\}$ as $\overline{\mathbf{R}}_{k} = \mathbb{E}\left[\mathbf{R}(k)\right]$ and $\widetilde{\mathbf{R}}(k) = \mathbf{R}(k)-\overline{\mathbf{R}}_{k}$.
\noindent Note that, $\mathbb{E}\left[\widetilde{\mathbf{R}}(k)\right] = \mathbf{0}$, and
{\small\begin{align}
\label{eq:laplace_res}
\mathbb{E}\left[\left\|\widetilde{\mathbf{R}}(k)\right\|^{2}\right] \leq 4N^{2}\mathbb{E}\left[\widetilde{\mathbf{R}}_{i,j}^{2}(k)\right] = \frac{4N^{2}\beta_{0}\rho_{0}^{2}}{(k+1)^{\tau+\epsilon}} - \frac{4N^{2}\beta_0^{2}}{(k+1)^{2\tau}},
\end{align}}
where $\left\|\cdot\right\|$ denotes the $\mathcal{L}_{2}$ norm. The above equation follows by relating the $\mathcal{L}_{2}$ and Frobenius norms.\\
\noindent We also have that, $\overline{\mathbf{R}}_{k}=\beta_{k}\overline{\mathbf{R}}$, where
{\small\begin{align}
\label{eq:laplacian1}
\overline{\mathbf{R}}_{i,j}=
\begin{cases}
-1 & \{i,j\}\in E, i\neq j\\
0 & i\neq j, \{i,j\}\notin E\\
-\sum_{l\neq i}\overline{\mathbf{R}}_{i,l}& i=j.
\end{cases}
\end{align}}
\noindent Technically speaking, the communication graph at each time $k$ encapsulated as $\mathbf{R}(k)$ need not be connected at all times, although the graph of allowable links $G$ is connected.. In fact, at any given time $k$, only a few of the possible links could be active. However, since $\overline{\mathbf{R}}_{k}=\beta_{k}\overline{\mathbf{R}}$, we note that, by Assumption \ref{as:2}, the instantaneous Laplacian $\mathbf{R}(k)$ is connected on average.The connectedness in average basically ensures that over time, the information from each agent in the graph reaches other agents over time in a symmetric fashion and thus ensuring information flow, while providing the leeway for the instantaneous communication graphs at different times to be not connected.\\
We employ a primal algorithm for solving the optimization problem in  \eqref{eq:opt_problem}. In particular, the update in \eqref{eq:opt_strategy} can then be written in a vector form as follows:
{\small\begin{align}
\label{eq:opt_strategy_1}
\mathbf{x}(k+1) = \mathbf{W}_{k}\mathbf{x}(k)-\alpha_{k}\widehat{\mathbf{G}}(\mathbf{x}(k)),
\end{align}}
where {\small$\mathbf{x}(k) = \left[\mathbf{x}_{1}^{\top}(k),\cdots,\mathbf{x}_{N}^{\top}(k)\right]^{\top}\in\mathbb{R}^{Nd}$, $F(\mathbf{x})=\sum_{i=1}^{N}f_{i}(\mathbf{x}_{i})$, $\mathbf{x} = \left[\mathbf{x}_{1}^{\top},\cdots,\mathbf{x}_{N}^{\top}\right]^{\top}\in\mathbb{R}^{Nd}$, $\widehat{\mathbf{G}}(\mathbf{x}(k))=[\widehat{\mathbf{g}}_{i}^{\top}(\mathbf{x}_{i}(k)),\cdots,\widehat{\mathbf{g}}_{}^{\top}(\mathbf{x}_{N}(k))]^{\top}$} and {\small$\mathbf{W}_{k}=\left(\mathbf{I}-\mathbf{R}(k)\right)\otimes \mathbf{I}_{d}$}.
\noindent We state an assumption on the weight sequences before proceeding further.
\begin{myassump}{3}
	\label{as:3}
	The weight sequence $\alpha_{k}$ is given by $\alpha_0/(k+1)$, where $\alpha_{0}>1/\mu$. For the sequence $\rho_{k}$ as defined in \eqref{eq:time_decay}, it is chosen in such a way that,
	{\small\begin{align}
	\label{eq:as51}
	\rho_{0}^{2} \le \frac{4N^{2}}{\lambda_{2}\left(\overline{\mathbf{R}}\right)}.
	\end{align}}
\end{myassump}
In the following sections, we propose two different approaches to solve the optimization problem in \eqref{eq:opt_problem}. The first approach involves zeroth order optimization, while the second approach involves a first order optimization. We first study the zeroth order approach to the problem in  \eqref{eq:opt_problem}.
\subsection{Zeroth Order Optimization}
\label{sec:zero_opt}
\noindent We employ a random directions stochastic approximation~(RDSA) type method from \cite{nesterov2011random} adapted to our distributed setup to solve~\eqref{eq:opt_problem}. Each node~$i$, $i=1,...,N$, in our setup maintains a local copy of its local estimate of the optimizer $\mathbf{x}_i(k) \in {\mathbb R}^d$ at all times. In addition to the smoothness assumption in \ref{as:1}, we define additional smoothness assumptions in the context of zeroth order optimization. 
\begin{myassump}{A1}
	\label{as:a1}
	For all $i=1,...,N$, the functions
	$f_{i}: \mathbb{R}^{d}\mapsto\mathbb{R}$ have their Hessian to be $M$-Lipschitz, i.e.,
	{\small\begin{align*}
		\|\nabla^{2} f_i(\mathbf{x}) -\nabla^{2} f_i(\mathbf{y})\| \leq M\,\|\mathbf{x}-\mathbf{y}\|, \forall i=1,\cdots,N.
		\end{align*}}
\end{myassump}

\noindent In order to carry out the optimization, each agent $i$ makes queries to the $\mathcal{SZO}$ at time $k$, from which the agent obtains noisy function values of $f_i(\mathbf{x}_i(k))$. Denote the noisy value of $f_{i}(\cdot)$ as $\widehat{f}_{i}(\cdot)$ where,
{\small\begin{align}
\label{eq:func_dist}
\widehat{f}_{i}(\mathbf{x}_{i}(k))=f_{i}(\mathbf{x}_{i}(k))+\widehat{v}_{i}(k;\mathbf{x}_{i}(k)),
\end{align}}
\noindent where the first argument in $\widehat{v}_{i}(k;\mathbf{x}_{i}(k))$ is the iteration number, and the second argument is the point at which the $\mathcal{SZO}$ oracle is queried. The properties of the noise $\widehat{v}_{i}(k;\mathbf{x}_{i}(k))$ are discussed further ahead.
\noindent Typically due to the unavailability of the analytic form of the functionals in zeroth order methods, the gradient cannot be explicitly evaluated and hence, we resort to a gradient approximation. In order to approximate the gradient, each agent makes three calls to the stochastic zeroth order oracle. For instance, agent $i$ queries for {\small$f_i(\mathbf{x}_i(k)+c_{k}\mathbf{z}_{i,k})$, $f_i(\mathbf{x}_i(k)+c_{k}\mathbf{z}_{i,k}/2)$} and $f_i(\mathbf{x}_i(k))$ at time $k$ and obtains {\small$\widehat{f}_i(\mathbf{x}_i(k)+c_{k}\mathbf{z}_{i,k})$, $\widehat{f}_i(\mathbf{x}_i(k)+c_{k}\mathbf{z}_{i,k}/2)$} and $\widehat{f}_i(\mathbf{x}_i(k))$ respectively, where $c_{k}$ is a carefully chosen time-decaying constant and $\mathbf{z}_{i,k}$ is a random vector~(to be specified soon) such that $\mathbb{E}\left[\mathbf{z}_{i,k}\mathbf{z}_{i,k}^{\top}\right]=\mathbf{I}_{d}$.\\
\noindent Denote by $\widehat{\mathbf{g}_{i}}(\mathbf{x}_{i}(k))$ the approximated gradient which is given by:
{\small\begin{align}
\label{eq:KW_grad}
&\widehat{\mathbf{g}_{i}}(\mathbf{x}_{i}(k)) \doteq  2\widetilde{\mathbf{g}_{i}}\left(\mathbf{x}_{i}(k),\frac{c_{k}}{2}\right)-\widetilde{\mathbf{g}_{i}}\left(\mathbf{x}_{i}(k),c_{k}\right)\nonumber\\
&=\frac{4\widehat{f}_{i}\left(\mathbf{x}_i(k)+\frac{c_{k}}{2}\mathbf{z}_{i,k}\right)-4\widehat{f}_{i}\left(\mathbf{x}_{i}(k)\right)}{c_{k}}\mathbf{z}_{i,k}\nonumber\\&-\frac{\widehat{f}_{i}\left(\mathbf{x}_i(k)+c_{k}\mathbf{z}_{i,k}\right)-\widehat{f}_{i}\left(\mathbf{x}_{i}(k)\right)}{c_{k}}\mathbf{z}_{i,k},
\end{align}}
\noindent where $\widetilde{\mathbf{g}_{i}}\left(\cdot,\cdot\right)$ represents a first order finite difference operation and $\theta_1,\theta_2\in[0,1]$. Note that, the gradient approximation derived in \eqref{eq:KW_grad} involves the noise in the retrieved function value from the $\mathcal{SZO}$ differently from other RDSA approaches such as in \cite{duchi2015optimal,nesterov2011random}. The finite difference technique used in \eqref{eq:KW_grad} resembles, \emph{the twicing trick} commonly used in Kernel density estimation which is employed so as to reduce bias and approximately eliminate the effect of the second degree term from the bias. It is also to be noted that the number of queries made to the $\mathcal{SZO}$ at every gradient approximation is $3$.
\noindent Thus, we can write,
{\small\begin{align}
\label{eq:kW_grad_1}
&\widehat{\mathbf{g}_{i}}(\mathbf{x}_{i}(k)) = \nabla f_{i}\left(\mathbf{x}_{i}(k)\right)+\underbrace{\mathbb{E}\left[\widehat{\mathbf{g}_{i}}(\mathbf{x}_{i}(k))|\mathcal{F}_{k}\right]-\nabla f_{i}\left(\mathbf{x}_{i}(k)\right)}_{\text{$c_{k}\mathbf{b}_{i}(\mathbf{x}_{i}(k))$}}\nonumber\\&+\underbrace{\mathbf{g}_{i}(\mathbf{x}_{i}(k))-\mathbb{E}\left[\widehat{\mathbf{g}_{i}}(\mathbf{x}_{i}(k))|\mathcal{F}_{k}\right]+\frac{v_{i}(k;\mathbf{x}_{i}(k))\mathbf{z}_{i,k}}{c_{k}}}_{\text{$\mathbf{h}_{i}(\mathbf{x}_{i}(k))$}},
\end{align}}
where 
{\small\begin{align}
	\label{eq:kw_grad_1.5}
	&\mathbf{g}_{i}(\mathbf{x}_{i}(k))=\frac{4f_{i}\left(\mathbf{x}_i(k)+\frac{c_{k}}{2}\mathbf{z}_{i,k}\right)-4f_{i}\left(\mathbf{x}_{i}(k)\right)}{c_{k}}\mathbf{z}_{i,k}\nonumber\\&-\frac{f_{i}\left(\mathbf{x}_i(k)+c_{k}\mathbf{z}_{i,k}\right)-f_{i}\left(\mathbf{x}_{i}(k)\right)}{c_{k}}\mathbf{z}_{i,k},
	\end{align}}
{\small\begin{align}
\label{eq:kw_grad_2}
&v_{i}(k;\mathbf{x}_{i}(k)) =4\left(\widehat{f}_i\left(\mathbf{x}_i(k)+\frac{c_{k}}{2}\mathbf{z}_{i,k}\right)-f_i\left(\mathbf{x}_i(k)+\frac{c_{k}}{2}\mathbf{z}_{i,k}\right)\right)\nonumber\\&-3(\widehat{f}_{i}(\mathbf{x}_{i}(k))-f_{i}(\mathbf{x}_{i}(k)))-(\widehat{f}_i\left(\mathbf{x}_i(k)+c_{k}\mathbf{z}_{i,k}\right)\nonumber\\&-f_i\left(\mathbf{x}_i(k)+c_{k}\mathbf{z}_{i,k}\right)),
\end{align}}
and,
\noindent $\mathcal{F}_k$ denotes the history of the proposed algorithm up to time $k$. Given that the sources of randomness in our algorithm are the noise sequence $\{\mathbf{v}(k;\mathbf{x}(k))\}$, the random network sequence $\{\mathbf{R}(k)\}$ and the random vectors for directional derivatives $\{\mathbf{z}_{k}\}$, $\mathcal{F}_k$ is given by the $\sigma$-algebra generated by the collection of random variables {\small$\{\,\mathbf{R}(s),\,\mathbf{v}(k;\mathbf{x}(k)),\,\mathbf{z}_{i,s}\}$,~$i=1,...,N$,~$s=0,...,k-1$}.\\
\noindent In general, the higher order smoothness imposed by Assumption \ref{as:3} allows us to use a higher order finite difference approximation for estimating the gradient. Due to assumption \ref{as:3}, the bias in the gradient estimate by employing a second order finite difference approximation of the gradient is of the order $O(c_{k}^{2})$. Instead, a first order finite difference approximation of the gradient would have yielded a bias of $O(c_{k})$. More generally, an assumption involving $p$-th order smoothness of the loss functions would have enabled usage of a $p$-th degree finite difference approximation of the gradient thus leading to a bias of $O(c_{k}^{p})$.
\begin{myassump}{A2}
	\label{as:a2}
	The $z_{i,k}$'s are drawn from a distribution $P$ such that {\small$\mathbb{E}\left[\mathbf{z}_{i,k}\mathbf{z}_{i,k}^{\top}\right]=\mathbf{I}_{d}$,  $s_{1}(P)=\mathbb{E}\left[\left\|\mathbf{z}_{i,k}\right\|^{4}\right]$} and {\small$s_{2}(P)=\mathbb{E}\left[\left\|\mathbf{z}_{i,k}\right\|^{6}\right]$} are finite. 
\end{myassump}
\noindent We provide two examples of two such distributions. If $\mathbf{z}_{i,k}$'s are drawn from $\mathcal{N}(0,\mathbf{I}_{d})$, then {\small$\mathbb{E}\left[\left\|\mathbf{z}_{i,k}\right\|^{4}\right]=d(d+2)$} and {\small$\mathbb{E}\left[\left\|\mathbf{z}_{i,k}\right\|^{6}\right]=d(d+2)(d+4)$}. If $\mathbf{z}_{i,k}$'s are drawn uniformly from the $l_{2}$-ball of radius $\sqrt{d}$, then we have, $\left\|\mathbf{z}_{i,k}\right\| = \sqrt{d}$, {\small$\mathbb{E}\left[\left\|\mathbf{z}_{i,k}\right\|^{4}\right] = d^{2}$} and {\small$\mathbb{E}\left[\left\|\mathbf{z}_{i,k}\right\|^{4}\right] = d^{3}$}. For the rest of the paper, we assume that $\mathbf{z}_{i,k}$'s are sampled from a normal distribution with {\small$\mathbf{E}\left[\mathbf{z}_{i,k}\mathbf{z}_{i,k}^{\top}\right] = \mathbf{I}_{d}$} or uniformly from the surface of the $l_2$-ball of radius $\sqrt{d}$.
\begin{Remark}
	\label{rm:1}
	The RDSA scheme~(see, for example \cite{nesterov2011random}) used here is similar to the simultaneous perturbation stochastic approximation scheme~(SPSA) as proposed in \cite{spall1992multivariate}. In SPSA, each dimension $i$ of the optimization iterate is perturbed by a random variable $\Delta_{i}$. However, instead of RDSA where the directional derivative is taken along the sampled vector $\mathbf{z}$, the directional derivative in case of SPSA is along the direction $\left[1/\Delta_{1},\cdots,1/\Delta_{d}\right]$ which thus needs boundedness of the inverse moments of the random variable $\Delta_{i}$. The particular choice for $\Delta_{i}$'s is taken to be the Bernoulli distribution with $\Delta_{i}$'s taking values $1$ and $-1$ with probability $0.5$. It is to be noted that at each iteration, both RDSA and SPSA approximate the gradient by making two calls to the stochastic zeroth order oracle as opposed to $d$ calls in the case of Kiefer Wolfowitz Stochastic Approximation~(KWSA)~(see, \cite{kiefer1952stochastic} for example).
\end{Remark}

\noindent For arbitrary deterministic initializations $\mathbf{x}_i(0) \in {\mathbb R}^d$, $i=1,...,N$, the optimizer update rule at node $i$ and $k=0,1,...,$ is given as follows:
{\small\begin{align}
\label{eq:update_rule_node}
&\mathbf{x}_i(k+1)=\mathbf{x}_i(k) - \sum_{j \in \Omega_i(k)}\psi_{i,k}\psi_{j,k}\left( \mathbf{x}_i(k) - \mathbf{x}_j(k) \right)\nonumber\\
&-\alpha_k\widehat{\mathbf{g}}_{i}(\mathbf{x}_{i}(k)),
\end{align}}
\noindent where $\widehat{\mathbf{g}}_{i}(\cdot)$ is as defined in \eqref{eq:kW_grad_1}. Comparing to the general update in \eqref{eq:opt_strategy}, the time-varying weight $\gamma_{i,j}(k)$ at agent $i$ to the incoming message from agent $j$ is given by $\psi_{j,k}$.
\begin{Remark}
	\label{rm:2}
	The main intuition behind the randomized activation albeit in a controlled manner for both the zeroth order and first order optimization methods is the fact that in expectation both the updates exactly reduce to the update where the communication graph between agents is realized by the expected Laplacian. 
\end{Remark}
\noindent It is to be noted that unlike first order stochastic gradient methods, where the algorithm has access to unbiased estimates of the gradient, the local gradient estimates $\mathbf{g}_{i}(\cdot)$ used in \eqref{eq:update_rule_node} are biased (see \eqref{eq:kW_grad_1}) due to the unavailability of the exact gradient functions and their approximations using the zeroth order scheme in \eqref{eq:KW_grad}. The update is carried on in all agents parallely in a synchronous fashion. The weight sequences $\{\alpha_{k}\}$, $\{c_{k}\}$ and $\{\beta_{k}\}$ are given by $\alpha_{k}=\alpha_0/(k+1)$, $c_{k}=c_0/(k+1)^{\delta}$ and $\beta_{k}=\beta_0/(k+1)^{\tau}$ respectively, where $\alpha_0, c_0, \beta_0 > 0$. We state an assumption on the weight sequences before proceeding further.
\begin{myassump}{A3}
	\label{as:a3}
	The sequence $c_{k}$ is given by:
	{\small\begin{align}
	\label{eq:c_k}
	c_{k} = \frac{1}{s_{1}(P)(k+1)^{\delta}},
	\end{align}}
	\noindent where $\delta > 0$.
	The constant $\delta > 0$ is chosen in such a way that,
	{\small\begin{align}
	\label{eq:a3}
	\sum_{k=1}^{\infty} \frac{\alpha_{k}^2}{c_k^2} < \infty
	\end{align}}
\end{myassump}
\noindent The update in \eqref{eq:update_rule_node} can be written as:
{\small\begin{align}
\label{eq:0update_rule}
&\mathbf{x}(k+1) = \mathbf{W}_{k}\mathbf{x}(k)-\alpha_{k}\nabla F(\mathbf{x}(k))-\alpha_{k}c_{k}\mathbf{b}(\mathbf{x}(k))\nonumber\\&-\alpha_{k}\mathbf{h}(\mathbf{x}(k)),
\end{align}}
where {\small$\mathbf{b}(\mathbf{x}(k)) = \left[\mathbf{b}_{1}^{\top}\left(\mathbf{x}_{1}(k)\right),\cdots,\mathbf{b}_{N}^{\top}\left(\mathbf{x}_{N}(k)\right)\right]^{\top}\in\mathbb{R}^{Nd}$} and {\small$\mathbf{h}(\mathbf{x}(k)) = \left[\mathbf{h}_{1}^{\top}\left(\mathbf{x}_{1}(k)\right),\cdots,\mathbf{h}_{N}^{\top}\left(\mathbf{x}_{N}(k)\right)\right]^{\top}\in\mathbb{R}^{Nd}$}.
We state an assumption on the measurement noises next.
\begin{myassump}{A4}
	\label{as:a4}
	For each $i=1,...,N$,
	the sequence of measurement noises $\{v_i(k;\mathbf{x}_{i}(k))\}$
	satisfies for all $k=0,1,...$:
	{\small\begin{align}
	\label{eq:as4}
	&\mathbb{E}[\,v_i(k;\mathbf{x}_{i}(k))\,|\,\mathcal{F}_k, \mathbf{z}_{i,k}] =0,\,\,\mathrm{almost\,surely\,(a.s.)}\nonumber\\
	&\mathbb{E}[\,v_i(k;\mathbf{x}_{i}(k))^2\,|\,\mathcal{F}_k,\mathbf{z}_{i,k}] \leq c_{v}\|\mathbf{x}_i(k)\|^2+\sigma_{v}^{2},
	\,\,\mathrm{a.s.},
	\end{align}}
	where $c_v$ and $\sigma_{v}^{2}$ are nonnegative constants. 
\end{myassump}
\noindent Assumption \ref{as:a4} is standard in the analysis of stochastic optimization methods, e.g., \cite{SayedStochasticOpt}. It is stated in terms of noise $\mathbf{v}_i(k;\mathbf{x}_{i}(k))$ in \eqref{eq:kw_grad_2} rather then directly in terms of the $\mathcal{SZO}$ noises in equation \eqref{eq:func_dist}, for notational simplicity. An equivalent statement can be made in terms of the noises in \eqref{eq:func_dist}. The assumption about the conditional independence between the random directions $\mathbf{z}_{i,k}$ and the function noise $v_{i}(k;\mathbf{x}_{i}(k))$ is mild. It merely formalizes the model that we consider, namely that, given history $\mathcal{F}_k$, drawing a random direction sample $\mathbf{z}_{i,k}$ and querying function values from the $\mathcal{SZO}$ are performed in a statistically independent manner. 

\noindent We remark that by Assumption \ref{as:a4},
{\small\begin{align}
\label{eq:0noise_2}
&\mathbb{E}\left[v_{i}(k;\mathbf{x}_{i}(k))\mathbf{z}_{i,k}|\mathcal{F}_{k}\right] = \mathbb{E}\left[\mathbf{z}_{i,k}\mathbb{E}\left[v_{i}(k;\mathbf{x}_{i}(k))|\mathcal{F}_{k},\mathbf{z}_{i,k}\right]\mid\mathcal{F}_{k}\right]\nonumber\\&\Rightarrow \mathbb{E}\left[\mathbf{v}_{\mathbf{z}}(k;\mathbf{x}(k))\mid\mathcal{F}_{k}\right] = \mathbf{0}.
\end{align}}
and,
{\small\begin{align}
\label{eq:0noise_3}
&\mathbb{E}\left[\left\|v_{i}(k;\mathbf{x}_{i}(k))\mathbf{z}_{i,k}\right\|^{2}|\mathcal{F}_{k}\right] \nonumber\\&= \mathbb{E}\left[\left\|\mathbf{z}_{i,k}\right\|^{2}\mathbb{E}\left[v_{i}^{2}(k;\mathbf{x}_{i}(k))|\mathcal{F}_{k},\mathbf{z}_{i,k}\right]|\mathcal{F}_{k}\right] \nonumber\\&\leq \mathbb{E}\left[\left\|\mathbf{z}_{i,k}\right\|^{2}\right]\left(c_{v}\|\mathbf{x}_i(k)\|^2+\sigma_{v}^{2}\right),
\end{align}}
\noindent where if $\mathbf{z}_{i,k}$'s are sampled from a normal distribution with $\mathbf{E}\left[\mathbf{z}_{i,k}\mathbf{z}_{i,k}^{\top}\right] = \mathbf{I}_{d}$ or uniformly from the surface of the $l_2$-ball of radius $\sqrt{d}$, then we have,
{\small\begin{align}
\label{eq:0noise_4}
\mathbb{E}\left[\left\|v_{i}(k;\mathbf{x}_{i}(k))\mathbf{z}_{i,k}\right\|^{2}|\mathcal{F}_{k}\right] \le d\left(c_{v}\|\mathbf{x}_i(k)\|^2+\sigma_{v}^{2}\right).
\end{align}}
\subsection{First Order Optimization}
\label{sec:first_order}
Each node~$i$, $i=1,...,N$, in the network maintains its own optimizer $\mathbf{x}_i(k) \in {\mathbb R}^d$ at each time step~(iterations) $k=0,1,...,$.
\noindent Specifically, for
arbitrary deterministic initial points
$\mathbf{x}_i(0) \in {\mathbb R}^d$, $i=1,...,N$,
the update rule at node $i$
and $k=0,1,...,$
is as follows:
{\small\begin{eqnarray}
\label{eqn-alg-node-i}
\mathbf{x}_i(k+1) &=&
\mathbf{x}_i(k) - \,
\sum_{j \in \Omega_i}
\psi_{i,k}\psi_{j,k}\left( \mathbf{x}_i(k) - \mathbf{x}_j(k) \right)\\
&-&
\alpha_k\,\left( \,\nabla f_i(\mathbf{x}_i(k)) + \mathbf{u}_i(k)\,\right).\nonumber
\end{eqnarray}}
In comparison to the generalized update in \eqref{eq:opt_strategy}, the weights assigned to incoming messages is given by $\gamma_{i,j}(k)=\psi_{i,k}\psi_{j,k}$, while the approximated gradient is given by $\nabla f_i(\mathbf{x}_i(k)) + \mathbf{u}_i(k)$.
The update \eqref{eqn-alg-node-i} is realized in a parallel fashion at all nodes $i=1,...,N$. First, each node $i$, when activated, i.e., when $\psi_{i,k}\neq 0$, broadcasts $\mathbf{x}_i(k)$ to
all its active neighbors $j \in \Omega_i$ which satisfy $\psi_{j,k}\neq 0$ and receives $\mathbf{x}_j(k)$ from all $j \in \Omega_i$ which are active. Subsequently, each node $i$, $i=1,...,N$
makes update~\eqref{eqn-alg-node-i}, which completes an iteration. Finally, $\mathbf{u}_i(k)$ is noise in the calculation of the $f_i$'s gradient at iteration~$k$.
\noindent For $k=0,1,...$, algorithm \eqref{eqn-alg-node-i} can be compactly written as follows:
{\small\begin{align}
\label{eq:update_rule}
\mathbf{x}(k+1) = \mathbf{W}_{k}\mathbf{x}(k)-\alpha_{k}\left(\nabla F(\mathbf{x}(k))+\mathbf{u}(k)\right),
\end{align}}
\noindent where $\mathbf{x} = \left[\mathbf{x}_{1}^{\top},\cdots,\mathbf{x}_{N}^{\top}\right]^{\top}\in\mathbb{R}^{Nd}$ and $\mathbf{u}(k)=\left[\mathbf{u}_{1}^{\top}(k),\cdots,\mathbf{u}_{N}^{\top}(k)\right]^{\top}\in\mathbb{R}^{Nd}$.
\noindent We make the following standard
assumption on the gradient noises.
First, denote by $\mathcal{S}_k$
the history
of algorithm \eqref{eqn-alg-node-i}
up to time $k$;
that is, $\mathcal{S}_k$, $k=1,2,...,$
is an increasing sequence of $\sigma$-algebras,
where
$\mathcal{S}_k$
is the $\sigma$-algebra generated
by the collection of random variables $
\{ \,\boldsymbol{\mathbf{R}}(s),
\,\mathbf{u}_i(t)\}$,
$i=1,...,N$,
$s=0,...,k-1$,
$t=0,...,k-1$.
\begin{myassump}{B2}
	\label{assumption-gradient-noise}
	For each $i=1,...,N$,
	the sequence of noises $\{\mathbf{u}_i(k)\}$
	satisfies for all $k=0,1,...$:
	{\small\begin{eqnarray}
	\label{eqn-ass-noise-1}
	\mathbb{E}[\,\mathbf{u}_i(k)\,|\,\mathcal{S}_k\,] &=& 0,\,\,\mathrm{almost\,surely\,(a.s.)}\\
	\label{eqn-ass-noise-2}
	\mathbb{E}[\,\|\mathbf{u}_i(k)\|^2\,|\,\mathcal{S}_k\,] &\leq& c_{u}\|\mathbf{x}_i(k)\|^2+\sigma_{u}^{2},
	\,\,\mathrm{a.s.},
	\end{eqnarray}}
	where $c_u$ is a nonnegative constant. 
\end{myassump}
\noindent\textbf{Communication Cost}
Define the communication cost $\mathcal{C}_{k}$ to be the expected per-node number of transmissions up to iteration $k$, i.e.,
{\small\begin{align}
\label{eq:comm_cost_1}
\mathcal{C}_{k}= \mathbb{E}\left[\sum_{s=0}^{k-1}\mathbb{I}_{\{\textit{node}~C~\textit{transmits~at}~s\}}\right],
\end{align}}
\noindent where $\mathbb{I}_{A}$ represents the indicator of event $A$. Note that the per-node communication cost in \eqref{eq:comm_cost_1} is the same as the network average of communication costs across all nodes, as the activation probabilities are homogeneous across nodes.
We now proceed to the main results pertaining to the proposed zeroth order and first order optimization schemes.
\section{Convergence rates: Statement of main results and interpretations}
\label{sec:main_results}
\noindent In this section, we state the results for both the zeroth order and the first order optimization algorithms. 
\subsection{Main Results: Zeroth Order Optimization}
We state the main result concerning the mean square error at each agent $i$ next.
\begin{Theorem}
	\label{theorem-1}
	1) Consider the optimizer estimate sequence $\{\mathbf{x}(k)\}$ generated by the algorithm \eqref{eq:update_rule_node}. Let assumptions \ref{as:1}-\ref{as:3} and \ref{as:a1}-\ref{as:a4} hold. Then, for each node~$i$'s optimizer estimate $\mathbf{x}_i(k)$ and the solution $\mathbf{x}^\star$ of problem~\eqref{eq:opt_problem}, $\forall k \geq 0$ there holds:
	{\small\begin{align}
	\label{eq:th1}
	&\mathbb{E}\left[\left\|\mathbf{x}_{i}(k)-\mathbf{x}^{\ast}\right\|^{2}\right] \le 2M_{k}+\frac{64NL^{2}\Delta_{1,\infty}\alpha_{0}^{2}}{\mu^2\lambda_{2}^{2}\left(\overline{\mathbf{R}}\right)c_0^{2}\beta_0^{2}(k+1)^{2-2\tau-2\delta}}\nonumber\\&\frac{16NM^{2}d^2(P)c_{0}^{4}}{\mu^{2}(k+1)^{4\delta}}+2Q_{k}+
	\frac{8\Delta_{1,\infty}\alpha_{0}^{2}}{\lambda_{2}^{2}\left(\overline{\mathbf{R}}\right)\beta_0^{2}c_0^2(k+1)^{2-2\tau-2\delta}}\nonumber\\&+
	\frac{4N\alpha_{0}\left(dc_{v}q_{\infty}(N,d,\alpha_0,c_0)+dN\sigma_1^{2}\right)}{\mu c_0^2(k+1)^{1-2\delta}},
	\end{align}}
	where, {\small$\Delta_{1,\infty}=6dc_{v}q_{\infty}(N,d,\alpha_0,c_0)+6dN\sigma_1^{2}$}  and {\small$q_{\infty}(N,d,\alpha_0,c_0)=\mathbb{E}\left[\left\|\mathbf{x}(k_2)-\mathbf{x}^{o}\right\|^{2}\right]+4\frac{\|\nabla F(\mathbf{x}^{o})\|^2}{\mu^2}+\frac{\sqrt{N}s_{1}(P)M\alpha_0c_0^2}{8\delta}+\frac{Ns_{1}^{2}(P)M^{2}\alpha_0^2c_0^4}{16(1+4\delta)}+\frac{d\alpha_0^2\left(2c_{v}N\left\|\mathbf{x}^{o}\right\|^{2}+N\sigma_{v}^{2}\right)}{c_0^{2}(1-2\delta)}+\frac{\alpha_0^2c_0^2\sqrt{N}s_{1}(P)M\left\|\nabla F(\mathbf{x}^{o})\right\|}{1+2\delta}+\frac{2N\alpha_{0}^{2}c_{0}^{4}s_{2}(P)}{1+4\delta}+\frac{4\alpha_{0}^{2}c_{0}^{2}Ns_{1}(P)}{1+2\delta}\left\|\nabla F\left(\mathbf{x}^{o}\right)\right\|^{2}$, $k_2=\max\{k_0,k_1\}$, $k_0 = \inf\{k|\mu^{2}\alpha_{k}^2 < 1\}$} and {\small$k_{1}=\inf\left\{k | \frac{\mu}{2} > \frac{\sqrt{N}}{4}s_{1}(P)Mc_{k}^2+\frac{2dc_{v}\alpha_{k}}{c_{k}^{2}}+4\alpha_{k}c_{k}^{2}Ns_{1}(P)L^{2}\right\}$}, with $M_{k}$ and $Q_{k}$ decaying faster than the rest of the terms.\\
	2) In particular, the rate of decay of the RHS of \eqref{eq:th1} is given by $(k+1)^{-\delta_1}$, where $\delta_{1}=\min\left\{1-2\delta,2-2\tau-2\delta,4\delta\right\}$. By, optimizing over $\tau$ and $\delta$, we obtain that for $\tau=1/2$ and $\delta=1/6$,
	{\small\begin{align*}
	&\mathbb{E}\left[\left\|\mathbf{x}_{i}(k)-\mathbf{x}^{\ast}\right\|^{2}\right]
	\le 2M_{k}+\frac{32NL^{2}\Delta_{1,\infty}\alpha_{0}^{2}}{\mu^2\lambda_{2}^{2}\left(\overline{\mathbf{R}}\right)c_0^{2}\beta_0^{2}(k+1)^{2/3}}\nonumber\\&\frac{16NM^{2}d^2(P)c_{0}^{2}}{\mu^{2}(k+1)^{2/3}}+2Q_{k}+
	\frac{8\Delta_{1,\infty}\alpha_{0}^{2}}{\lambda_{2}^{2}\left(\overline{\mathbf{R}}\right)\beta_0^{2}c_0^2(k+1)^{2/3}}\nonumber\\&+
	\frac{4N\alpha_{0}\left(dc_{v}q_{\infty}(N,d,\alpha_0,c_0)+dN\sigma_1^{2}\right)}{\mu c_0^2(k+1)^{2/3}}= O\left(\frac{1}{k^{\frac{2}{3}}}\right),~~\forall i.
	\end{align*}}\\
	3) The communication cost is given by,
	{\small\begin{align*}
	\mathbb{E}\left[\sum_{t=1}^{k}\zeta_{t}\right]=O\left(k^{\frac{3}{4}+\frac{\epsilon}{2}}\right).
	\end{align*}}
	and the MSE-communication rate is given by,
	{\small\begin{align}
	\label{eq:th0comm_rate}
	\mathbb{E}\left[\left\|\mathbf{x}_{i}(k)-{\mathbf{x}^{\star}}\right\|^{2}\right]=O\left(\mathcal{C}_{k}^{-8/9+\zeta}\right),
	\end{align}}
	where $\zeta$ can be arbitrarily small.
\end{Theorem}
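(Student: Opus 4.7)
The plan is to use the standard distributed stochastic-approximation decomposition: split the error into a \emph{consensus error} (disagreement of the nodes' iterates from their running average) and a \emph{mean error} (distance of the running average from $\mathbf{x}^{\ast}$), bound each separately, and then combine them via the triangle inequality $\|\mathbf{x}_{i}(k)-\mathbf{x}^{\ast}\|^{2}\le 2\|\mathbf{x}_{i}(k)-\bar{\mathbf{x}}(k)\|^{2}+2\|\bar{\mathbf{x}}(k)-\mathbf{x}^{\ast}\|^{2}$, where $\bar{\mathbf{x}}(k):=(1/N)\sum_{i=1}^{N}\mathbf{x}_{i}(k)$. The compact zeroth-order update~\eqref{eq:0update_rule} decomposes naturally: letting $\tilde{\mathbf{x}}(k):=\mathbf{x}(k)-\mathbf{1}_{N}\otimes\bar{\mathbf{x}}(k)$, multiplying by $\mathbf{J}\otimes\mathbf{I}_{d}$ kills the Laplacian and gives an update for $\bar{\mathbf{x}}(k)$ that looks like a noisy centralized gradient step on $f$, while projecting onto the orthogonal complement gives a stochastically contractive recursion for $\tilde{\mathbf{x}}(k)$ driven by $\mathbf{W}_{k}-\mathbf{J}\otimes\mathbf{I}_{d}$.

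I would first bound the disagreement $\mathbb{E}\|\tilde{\mathbf{x}}(k)\|^{2}$. Assumption~\ref{as:3} on $\rho_{0}$, combined with the expectation and variance formulas~\eqref{eq:expectation}--\eqref{eq:laplace_res} for $\mathbf{R}(k)$, yields $\mathbb{E}\|(\mathbf{W}_{k}-\mathbf{J}\otimes \mathbf{I}_{d})\mathbf{y}\|^{2}\le (1 - c\beta_{k})\|\mathbf{y}\|^{2}$ for $\mathbf{y}$ in the disagreement subspace, where $c>0$ depends on $\lambda_{2}(\overline{\mathbf{R}})$. Substituting the gradient decomposition $\widehat{\mathbf{g}}_{i}=\nabla f_{i}+c_{k}\mathbf{b}_{i}+\mathbf{h}_{i}$ from~\eqref{eq:kW_grad_1}, the forcing has three pieces: the true gradient (Lipschitz, hence bounded by $O(\|\tilde{\mathbf{x}}(k)\|)$ plus a constant), the bias of squared norm $O(c_{k}^{4})$ since the twicing construction~\eqref{eq:KW_grad} together with the Hessian-Lipschitz Assumption~\ref{as:a1} cancels the leading $O(c_{k})$ term, and the zero-mean fluctuation $\mathbf{h}$ whose conditional second moment is $O(1/c_{k}^{2})$ by Assumption~\ref{as:a4} and~\eqref{eq:0noise_4}. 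Feeding these into a recursion-lemma argument of the type used in \cite{sahu2018communication} with $\alpha_{k}=\alpha_{0}/(k+1)$ and $\beta_{k}\sim (k+1)^{-\tau}$ yields $\mathbb{E}\|\tilde{\mathbf{x}}(k)\|^{2}=O(\alpha_{k}^{2}/(c_{k}^{2}\beta_{k}^{2}))=O((k+1)^{-(2-2\tau-2\delta)})$, which matches the dominant disagreement term in the stated bound.

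Next, for the mean error $\mathbf{e}(k):=\bar{\mathbf{x}}(k)-\mathbf{x}^{\ast}$, averaging the compact update and replacing each $\nabla f_{i}(\mathbf{x}_{i}(k))$ by $\nabla f_{i}(\bar{\mathbf{x}}(k))$ (incurring an $L$-Lipschitz error controlled by $\|\tilde{\mathbf{x}}(k)\|$), together with $\mu$-strong convexity of $f$, gives the one-step contraction $\mathbb{E}\|\mathbf{e}(k+1)\|^{2}\le (1-\mu\alpha_{k})\mathbb{E}\|\mathbf{e}(k)\|^{2}+O(\alpha_{k}^{2}/c_{k}^{2})+O(\alpha_{k}^{2}c_{k}^{4})+O(\mathbb{E}\|\tilde{\mathbf{x}}(k)\|^{2})$, with the cross-terms absorbed by Young's inequality and Assumption~\ref{as:a3} guaranteeing the relevant summability. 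Applying the standard Chung-type lemma for recursions of the form $a_{k+1}\le (1-\mu\alpha_{0}/(k+1))a_{k}+r_{k}$ (valid because Assumption~\ref{as:3} enforces $\alpha_{0}>1/\mu$) and substituting the disagreement bound from the previous step gives $\mathbb{E}\|\mathbf{e}(k)\|^{2}=O((k+1)^{-\min\{1-2\delta,\,2-2\tau-2\delta,\,4\delta\}})$. Combining consensus and mean errors proves part~1, and the constants in the stated bound simply propagate the explicit dependencies on $N$, $L$, $\mu$, $\lambda_{2}(\overline{\mathbf{R}})$, $c_{v}$, $\sigma_{v}^{2}$.

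Part~2 is then a calculus exercise: equating $4\delta=1-2\delta$ forces $\delta=1/6$, and then $2-2\tau-2\delta=2/3$ forces $\tau=1/2$, giving the $O(k^{-2/3})$ iteration rate. For part~3, the expected per-node communication count~\eqref{eq:comm_cost_1} is $\mathcal{C}_{k}=\sum_{t=0}^{k-1}\zeta_{t}$; with $\tau=1/2$ one has $\zeta_{t}=\zeta_{0}(t+1)^{-(1/4-\epsilon/2)}$, so $\mathcal{C}_{k}=\Theta(k^{3/4+\epsilon/2})$, hence $k=\Theta(\mathcal{C}_{k}^{4/(3+2\epsilon)})$ and therefore $O(k^{-2/3})=O(\mathcal{C}_{k}^{-8/(9+6\epsilon)})=O(\mathcal{C}_{k}^{-8/9+\zeta})$ for any $\zeta>0$ by choosing $\epsilon$ sufficiently small. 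The main technical obstacle I anticipate is bookkeeping the three competing exponents simultaneously through the coupled recursions: the bias is \emph{shrunk} by $c_{k}^{2}$ while the gradient-estimation noise is \emph{inflated} by $1/c_{k}^{2}$ and the consensus scale is $\alpha_{k}/(c_{k}\beta_{k})$, so each of $1-2\delta$, $2-2\tau-2\delta$ and $4\delta$ comes from a genuinely different mechanism and must be tracked carefully without collapsing any of them into a weaker estimate.
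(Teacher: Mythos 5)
Your overall route is the same as the paper's: decompose into the disagreement $\widetilde{\mathbf{x}}(k)=\mathbf{x}(k)-\mathbf{1}_N\otimes\overline{\mathbf{x}}(k)$ and the mean error $\overline{\mathbf{x}}(k)-\mathbf{x}^{\ast}$, run a $\beta_k$-contraction recursion for the former and a $\mu\alpha_k$-contraction recursion for the latter, combine by $\|\mathbf{x}_i(k)-\mathbf{x}^\ast\|^2\le 2\|\widetilde{\mathbf{x}}_i(k)\|^2+2\|\overline{\mathbf{x}}(k)-\mathbf{x}^\ast\|^2$, and then optimize the exponents and translate iterations into communications exactly as you describe. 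Parts 2 and 3 of your argument are correct and match the paper.

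There is, however, one genuine gap in part 1: you never establish the a priori mean-square boundedness of the iterates, and without it the ``constants'' in both of your recursions are not constants. Under Assumption~\ref{as:a4} the function-noise variance is state-dependent, $\mathbb{E}[v_i^2\,|\,\mathcal{F}_k,\mathbf{z}_{i,k}]\le c_v\|\mathbf{x}_i(k)\|^2+\sigma_v^2$, so the second moment of $\mathbf{h}$ is $O\bigl((\|\mathbf{x}(k)\|^2+1)/c_k^2\bigr)$, not $O(1/c_k^2)$; likewise your claim that the true-gradient forcing in the disagreement recursion is ``bounded by $O(\|\widetilde{\mathbf{x}}(k)\|)$ plus a constant'' is not right, since $\|\nabla F(\mathbf{x}(k))\|\le L\|\mathbf{x}(k)-\mathbf{x}^o\|+\|\nabla F(\mathbf{x}^o)\|$ couples the disagreement recursion back to the full error, not just to $\widetilde{\mathbf{x}}(k)$. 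The paper closes this loop \emph{before} either recursion, in Lemma~\ref{Lemma0-MSS-BDD}: using $\|\mathbf{W}_k-\alpha_k\mathbf{H}_k\|\le 1-\mu\alpha_k$ and a supermartingale-type technical lemma, it shows $\sup_k\mathbb{E}\|\mathbf{x}(k)-\mathbf{x}^o\|^2\le q_\infty(N,d,\alpha_0,c_0)<\infty$, and only then does the driving term $\Delta_{1,\infty}$ in the disagreement bound (and the noise variance $\sigma_1^2$ in the mean-error bound) become a finite constant. You need to insert this step; once it is in place, the rest of your argument goes through as in the paper.
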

\noindent Theorem \ref{theorem-1} asserts an $O\left(\mathcal{C}_{k}^{-8/9+\zeta}\right)$ MSE-communication rate can be achieved while keeping the MSE decay rate at $O\left(k^{-\frac{2}{3}}\right)$. The performance of the zeroth order optimization scheme depends explicitly on the connectivity of the expected Laplacian through the terms $\frac{32NL^{2}\Delta_{1,\infty}\alpha_{0}^{2}}{\mu^2\lambda_{2}^{2}\left(\overline{\mathbf{R}}\right)c_0^{2}\beta_0^{2}(k+1)^{0.5}}$ and $\frac{8\Delta_{1,\infty}\alpha_{0}^{2}}{\lambda_{2}^{2}\left(\overline{\mathbf{R}}\right)\beta_0^{2}c_0^2(k+1)^{0.5}}$. In particular, communication graphs which are well connected, i.e., have higher values of $\lambda_{2}\left(\overline{\mathbf{R}}\right)$ will have lower MSE as compared to a counterpart with lower values of $\lambda_{2}\left(\overline{\mathbf{R}}\right)$.\\
If higher order smoothness assumptions are made, i.e., a $p$-th order smoothness assumption is made which is then exploited by means of a $p$-th degree finite difference gradient approximation, then by repeating the same proof arguments, the rate in terms of iteration count can be shown to improve to $O\left(k^{-\frac{p}{p+1}}\right)$. The improvement can be attributed to a better bias-variance tradeoff as illustrated by the terms $\frac{8M^{2}d^2(P)c_{0}^{4}}{\mu^{2}(k+1)^{2p\delta}}$ and $\frac{4N\alpha_{0}\left(dc_{v}q_{\infty}(N,d,\alpha_0,c_0)+dN\sigma_1^{2}\right)}{\mu c_0^2(k+1)^{1-2\delta}}$. The corresponding MSE-communication rate improves to $O\left(\mathcal{C}_{k}^{-\frac{4p}{3(p+1)}+\zeta}\right)$.
\subsection{Main Results: First Order Optimization}
We state the main result concerning the mean square error at each agent $i$ next.
\begin{Theorem}
	\label{theorem-2}
	Consider algorithm \eqref{eqn-alg-node-i} with
	step-sizes $\alpha_k=\frac{\alpha_0}{k+1}$
	and $\beta_k=\frac{\beta_0}{(k+1)^{1/2}}$,
	where $\beta_0>0$ and
	$\alpha_0>2/\mu$. Further, let Assumptions \ref{as:1}-\ref{as:3} and \ref{assumption-gradient-noise} hold.\\
	1) Then, for each node~$i$'s
	solution estimate $\mathbf{x}_i(k)$ and
	the solution $\mathbf{x}^\star$ of problem~\eqref{eq:opt_problem}, , $\forall k \geq 0$ there holds:
	{\small\begin{align}
	\label{eq:th2}
	&\mathbb{E}\left[\left\|\mathbf{x}_{i}(k)-\mathbf{x}^{\ast}\right\|^{2}\right] \le 2M_{k}+\frac{32NL^{2}\Delta_{1,\infty}\alpha_{0}^{2}}{\mu^2\lambda_{2}^{2}\left(\overline{\mathbf{R}}\right)\beta_0^{2}(k+1)}\nonumber\\&+2Q_{k}+ \frac{4\Delta_{1,\infty}\alpha_{0}^{2}}{\lambda_{2}^{2}\left(\overline{\mathbf{R}}\right)\beta_0^{2}(k+1)},
	\end{align}}
	where, {\small$\Delta_{1,\infty}=2\left\|\nabla F(\mathbf{x}(k))\right\|^{2}+4c_{u}q_{\infty}(N,\alpha_0)+4N\sigma_{1}^{2}$}  and {\small$q_{\infty}(N,\alpha_0)=\mathbb{E}\left[\left\|\mathbf{x}(k_2)-\mathbf{x}^{o}\right\|^{2}\right]+\frac{\pi^{2}}{6}\alpha_0^2\left(2c_{u}N\left\|\mathbf{x}^{o}\right\|^{2}+N\sigma_{u}^{2}\right)+4\frac{\|\nabla F(\mathbf{x}^{o})\|^2}{\mu^2}$}, {\small$k_2=\max\{k_0,k_1\}$, $k_0 = \inf\{k|\mu^{2}\alpha_{k}^2 < 1\}$} and {\small$k_{1}=\inf\left\{k | \frac{\mu}{2} > 2c_{u}\alpha_{k}\right\}$}, with $M_{k}$ and $Q_{k}$ decaying faster than the rest of the terms.\\
	2) The communication cost is given by,
	{\small\begin{align*}
	\mathbb{E}\left[\sum_{t=1}^{k}\zeta_{t}\right]=O\left(k^{\frac{3}{4}+\frac{\epsilon}{2}}\right),
	\end{align*}}
	leading to the following MSE-communication rate:
	{\small\begin{align}
	\label{eq:1comm_rate}
	\mathbb{E}\left[\left\|\mathbf{x}_{i}(k)-{\mathbf{x}^{\star}}\right\|^{2}\right]=O\left(\mathcal{C}_{k}^{-\frac{4}{3}+\zeta}\right),
	\end{align}}
	where $\zeta$ can be arbitrarily small.
\end{Theorem}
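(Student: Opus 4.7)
\textbf{Proof proposal for Theorem~\ref{theorem-2}.}

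The plan is to decompose each node's estimate as $\mathbf{x}_i(k) = \overline{\mathbf{x}}(k) + (\mathbf{x}_i(k) - \overline{\mathbf{x}}(k))$, where $\overline{\mathbf{x}}(k) = \frac{1}{N}\sum_{j} \mathbf{x}_j(k)$ is the network average, and bound $\mathbb{E}\|\mathbf{x}_i(k)-\mathbf{x}^\star\|^2$ via the triangle inequality by separately controlling the disagreement $\mathbf{x}_\perp(k) := (\mathbf{I}-\mathbf{J}\otimes \mathbf{I}_d)\mathbf{x}(k)$ and the consensus error $\overline{\mathbf{x}}(k)-\mathbf{x}^\star$. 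A preliminary step, necessary because the gradient-noise bound in Assumption~\ref{assumption-gradient-noise} is state-dependent, is to establish uniform boundedness: using the strong convexity of $F$, the Lipschitz gradient property, and the step-size condition $\alpha_0>2/\mu$, I would first show by a Lyapunov/induction argument that $\sup_k \mathbb{E}\|\mathbf{x}(k)-\mathbf{x}^o\|^2 \leq q_\infty(N,\alpha_0)$ for a suitable reference point $\mathbf{x}^o$, so that the conditional second moment of $\mathbf{u}(k)$ is uniformly bounded by some $\Delta_{1,\infty}$. The induction is started past the time index $k_2=\max\{k_0,k_1\}$ where $\mu^2\alpha_k^2<1$ and $\mu/2 > 2 c_u \alpha_k$ simultaneously hold, mirroring the construction in the zeroth-order proof.

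For the disagreement, I would multiply \eqref{eq:update_rule} on the left by $\mathbf{I}-\mathbf{J}\otimes \mathbf{I}_d$. Since $(\mathbf{J}\otimes \mathbf{I}_d)\,\mathbf{W}_k = \mathbf{J}\otimes \mathbf{I}_d$ by symmetry of $\mathbf{R}(k)$, the disagreement obeys
\begin{equation*}
\mathbf{x}_\perp(k+1) = \bigl[\mathbf{W}_k - \mathbf{J}\otimes \mathbf{I}_d\bigr]\mathbf{x}_\perp(k) - \alpha_k\bigl(\mathbf{I}-\mathbf{J}\otimes \mathbf{I}_d\bigr)\bigl(\nabla F(\mathbf{x}(k))+\mathbf{u}(k)\bigr).
\end{equation*}
Taking conditional expectation and splitting $\mathbf{R}(k)=\overline{\mathbf{R}}_k+\widetilde{\mathbf{R}}(k)$, the deterministic contraction factor is $1 - \beta_k \lambda_2(\overline{\mathbf{R}}) + O(\beta_k^2 \rho_0^2/\beta_k^2\cdot\ldots)$, while the stochastic fluctuation of $\widetilde{\mathbf{R}}(k)$ contributes a higher-order term controlled by \eqref{eq:laplace_res} and the already-established uniform bound on $\mathbb{E}\|\mathbf{x}(k)\|^2$. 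A standard recursion lemma for sequences of the form $u_{k+1} \leq (1-a/(k+1)^{\tau})u_k + b\,\alpha_k^2$ then yields $\mathbb{E}\|\mathbf{x}_\perp(k)\|^2 = O(\alpha_k^2/\beta_k) = O(1/k^{3/2})$ asymptotically, with explicit constants matching the second and fourth terms of \eqref{eq:th2}, and the residual transients absorbed into $2M_k$.

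For the averaged iterate, summing \eqref{eqn-alg-node-i} over nodes gives
\begin{equation*}
\overline{\mathbf{x}}(k+1) = \overline{\mathbf{x}}(k) - \frac{\alpha_k}{N}\sum_{i=1}^N \nabla f_i(\overline{\mathbf{x}}(k)) + \frac{\alpha_k}{N}\sum_{i=1}^N \bigl[\nabla f_i(\overline{\mathbf{x}}(k)) - \nabla f_i(\mathbf{x}_i(k))\bigr] - \frac{\alpha_k}{N}\sum_{i=1}^N \mathbf{u}_i(k),
\end{equation*}
where the second term is bounded via Lipschitz continuity by $\alpha_k L \|\mathbf{x}_\perp(k)\|/\sqrt{N}$, hence decays sufficiently fast thanks to the disagreement bound, and the last term has conditional mean zero with variance $O(\alpha_k^2)$. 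Using $\mu$-strong convexity of $f/N$ and expanding $\|\overline{\mathbf{x}}(k+1)-\mathbf{x}^\star\|^2$, one obtains $\mathbb{E}\|\overline{\mathbf{x}}(k+1)-\mathbf{x}^\star\|^2 \leq (1-\mu \alpha_k)\mathbb{E}\|\overline{\mathbf{x}}(k)-\mathbf{x}^\star\|^2 + O(\alpha_k^2)$, and the choice $\alpha_0>2/\mu$ (so that $\mu \alpha_0 > 2$) yields the $O(1/k)$ rate through the Chung-type lemma. Combining with the disagreement bound via $\|\mathbf{x}_i(k)-\mathbf{x}^\star\|^2 \leq 2\|\mathbf{x}_i(k)-\overline{\mathbf{x}}(k)\|^2 + 2\|\overline{\mathbf{x}}(k)-\mathbf{x}^\star\|^2$ produces \eqref{eq:th2}.

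For part~2, by \eqref{eq:dec_rule_lin_1} the expected per-node transmissions up to iteration $k$ is $\sum_{t=0}^{k-1}\zeta_t = \sum_{t=0}^{k-1}\zeta_0/(t+1)^{(\tau-\epsilon)/2}$; with $\tau=1/2$ this telescopes to $O(k^{3/4+\epsilon/2})$. Inverting, $k = \Theta(\mathcal{C}_k^{4/(3+2\epsilon)})$, and substituting into the $O(1/k)$ MSE rate gives $\mathbb{E}\|\mathbf{x}_i(k)-\mathbf{x}^\star\|^2 = O(\mathcal{C}_k^{-4/(3+2\epsilon)}) = O(\mathcal{C}_k^{-4/3+\zeta})$ for $\zeta>0$ arbitrarily small (as $\epsilon\to 0$). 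The main obstacle I anticipate is the coupled nature of the boundedness bootstrap --- the gradient noise variance depends on $\|\mathbf{x}(k)\|^2$, while showing $\|\mathbf{x}(k)\|^2$ stays bounded requires controlling that same noise --- and a careful choice of the initial index $k_2$, together with the strong-convexity-induced contraction, is needed to close the induction uniformly in $k$.
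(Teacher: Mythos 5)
Your architecture coincides with the paper's: a uniform second-moment bound $\sup_k\mathbb{E}\left[\|\mathbf{x}(k)-\mathbf{x}^{o}\|^{2}\right]\le q_{\infty}(N,\alpha_0)$ bootstrapped past $k_2=\max\{k_0,k_1\}$ (the paper's Lemma~\ref{Lemma-MSS-BDD}), a disagreement recursion for $(\mathbf{I}-\mathbf{J})\mathbf{x}(k)$ with contraction $1-\beta_k\lambda_{2}(\overline{\mathbf{R}})$ (Lemma~\ref{lemma-disag-bound}), a Chung-type recursion for the network average exploiting $\alpha_0>2/\mu$, and the inversion of $\mathcal{C}_k=\sum_t\zeta_t=O(k^{3/4+\epsilon/2})$ for part~2. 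Those pieces, including your treatment of the $\widetilde{\mathbf{R}}(k)$ fluctuation via \eqref{eq:laplace_res} and the circularity-breaking role of $k_2$ for the state-dependent noise, are all sound and match the paper.

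There is, however, one concrete quantitative error: your disagreement rate. The recursion you set up has the form $u_{k+1}\le(1-s_k)u_k+(1+1/s_k)\,\alpha_k^2\Delta_{1,\infty}$ with $s_k=\tfrac{\beta_k}{2}\lambda_{2}(\overline{\mathbf{R}})=\Theta(k^{-1/2})$, and resolving it (the paper's Lemma~\ref{lemma:technical1}) yields a dominant term of order $\alpha_k^2/s_k^2=O(\alpha_k^2/\beta_k^2)=O(1/k)$, not $O(\alpha_k^2/\beta_k)=O(1/k^{3/2})$ as you assert --- you have effectively divided the forcing term by $s_k$ only once rather than twice. This is not cosmetic for the explicit bound \eqref{eq:th2}: the disagreement enters at the \emph{same} order $1/(k+1)$ as the consensus error, both directly (the fourth term of \eqref{eq:th2}) and through the gradient-mismatch term $\tfrac{\alpha_k}{N}\sum_i\left[\nabla f_i(\overline{\mathbf{x}}(k))-\nabla f_i(\mathbf{x}_i(k))\right]$ in the averaged recursion, whose contribution after the $(1+\theta_k)$ split is $\Theta\left(\alpha_k L^{2}\mathbb{E}\left[\|\widetilde{\mathbf{x}}(k)\|^{2}\right]/\mu\right)=\Theta(1/k^{2})$ and resolves to the leading term $\tfrac{32NL^{2}\Delta_{1,\infty}\alpha_{0}^{2}}{\mu^2\lambda_{2}^{2}\left(\overline{\mathbf{R}}\right)\beta_0^{2}(k+1)}$ of \eqref{eq:th2}. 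Under your $O(1/k^{3/2})$ figure both contributions would appear to be lower order, so your claim that your constants ``match the second and fourth terms of \eqref{eq:th2}'' is internally inconsistent. The big-O conclusions $O(1/k)$ and \eqref{eq:1comm_rate} survive regardless, since the gradient-noise forcing $O(\alpha_k^2)$ alone already imposes an $O(1/k)$ floor on the averaged iterate, but recovering the stated explicit bound requires correcting the disagreement order to $O(\alpha_k^2/\beta_k^2)$.
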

We remark that
the condition
$\alpha_0>2/\mu$ can be
relaxed to
require only a positive
$\alpha_0$,
in which case the rate becomes
$O(\mathrm{ln}(k)/k)$, instead
of~$O(1/k)$.
Also, to avoid
large step-sizes
at initial iterations for a large $\alpha_0$,
step-size $\alpha_k$
can be modified to
$\alpha_k=\alpha_0/(k+k_0)$,
for arbitrary positive constant~$k_0$,
and Theorem~\ref{theorem-2}
continues to hold.
Theorem~\ref{theorem-2}
establishes the $O(1/k)$ MSE rate of convergence
of algorithm~\eqref{eqn-alg-node-i}; due to the
assumed $f_i$'s strong convexity, the theorem also implies
that $\mathbb{E}\left[ f(\mathbf{x}_i(k)) - f(\mathbf{x}^\star)\right]
=O(1/k)$.
\section{Simulations}
\label{sec:sim}
\noindent In this section, we provide evaluations of the proposed algorithms on the Abalone dataset~(\cite{LibSVM}). To be specific, we consider $\ell_2$-regularized empirical risk minimization for the Abalone dataset, where the regularization function is given by $\Psi_i(\mathbf{x})=
\frac{1}{2}\|\mathbf{x}\|^2$. We consider a $10$ node network for both the zeroth and first order optimization schemes. The Abalone dataset has $4177$ data points out of which $577$ data points are kept aside as the test set and the other $3600$ is divided equally among the $10$ nodes resulting in each node having $360$ data points. For the zeroth order optimization, we compare the proposed undirected sequence of Laplacian constructions based optimization scheme and the static Laplacian~(Benchmark) based optimization schemes. The benchmark scheme  is characterized by the communication graph being static and thereby resulting agents connected through a link to exchange messages at all times. The data points at each node are sampled without replacement in a contiguous manner. The vectors $\mathbf{z}_{i,k}$s for evaluating directional derivatives were sampled from a normal distribution with identity covariance. Figure \ref{Figure_1} compares the test error for the three aforementioned schemes, where it can be clearly observed that the test error is indistinguishable in terms of the number of iterations or equivalently in terms of the number of queries to the stochastic zeroth oracle. Figure \ref{Figure_2} demonstrates the superiority the proposed algorithm in terms of the test error versus communication cost as compared to the benchmark as predicted by Theorem \ref{theorem-1}. For example, at the same relative test error level, the proposed algorithm uses up to $3$x less number of transmissions as compared to the benchmark scheme.
\begin{figure}[thpb]
	\centering
	\includegraphics[width=90mm]{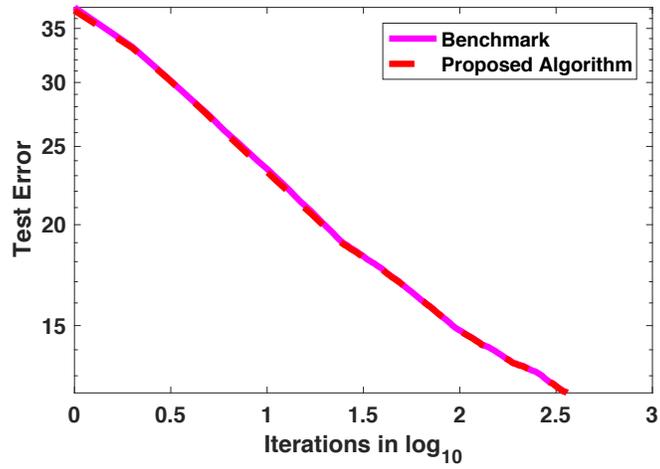}
	\caption{Test Error vs Iterations}
	\label{Figure_1}
\end{figure}
\begin{figure}[thpb]
	\centering
	\includegraphics[width=90mm]{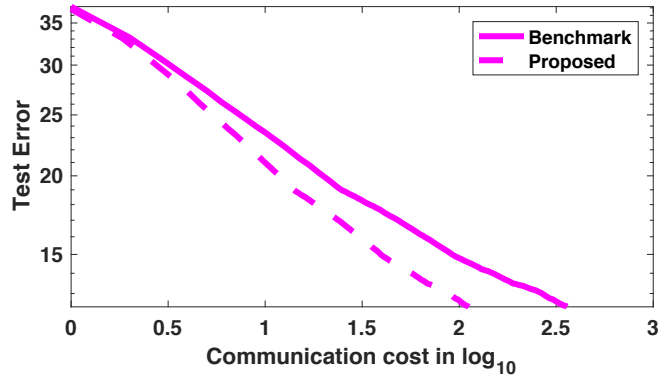}
	\caption{Test Error vs Communication Cost}
	\label{Figure_2}
\end{figure}
In Figure \ref{Figure_3}, the test error of the communication efficient first order optimization scheme is compared with the test error of the benchmark scheme which refers to the optimization scheme with the communication graph abstracted by a static Laplacian in terms of iterations or equivalently the number of queries per agent to the stochastic first order oracle, i.e., gradient evaluations. Figure \ref{Figure_4} demonstrates the superiority of the proposed communication efficient first order optimization scheme in terms of the test error versus communication cost as compared to the benchmark as predicted by Theorem \ref{theorem-2}. For example, at the same relative test error level, the proposed algorithm uses up to $3$x less number of transmissions as compared to the benchmark scheme. 
\begin{figure}[thpb]
	\centering
	\includegraphics[width=90mm]{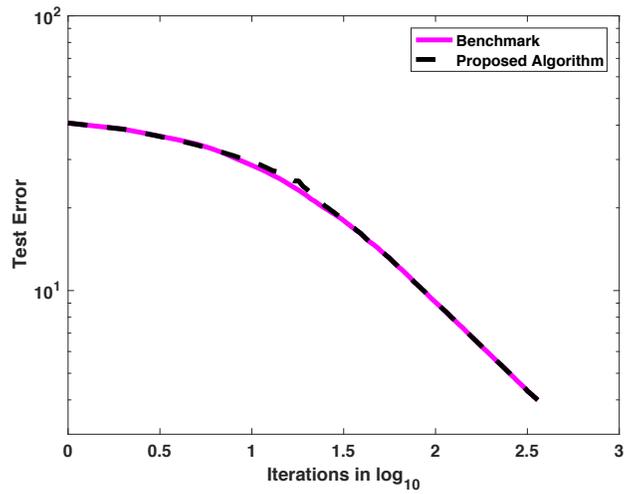}
	\caption{Test Error vs Iteration}
	\label{Figure_3}
\end{figure}
\begin{figure}[thpb]
	\centering
	\includegraphics[width=90mm]{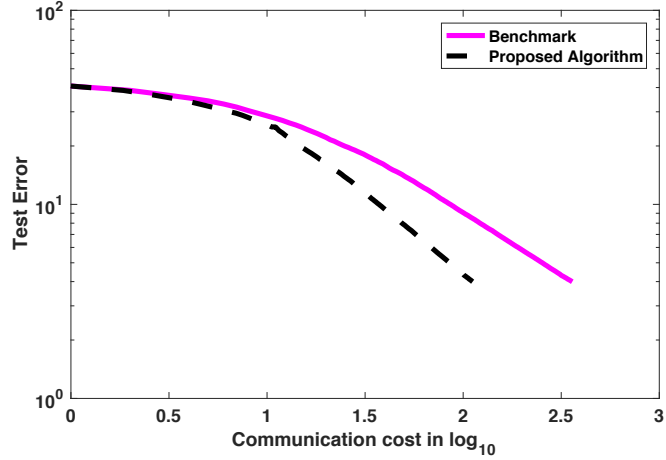}
	\caption{Test Error vs Communication Cost}
	\label{Figure_4}
\end{figure}
\section{Proof of the main result: Zeroth order optimization}
\label{sec:proof_main_res_0}
The proof of the main result proceeds through three main steps. The first step involves establishing the boundedness of the iterate sequence, while the second step involves establishing the convergence rate of the optimizer sequence at each agent to the network averaged optimizer sequence. The convergence of the network averaged optimizer is then analyzed as the final step and in combination with the second step results in the establishment of bounds on MSE of the optimizer sequence at each agent.
\begin{Lemma}
	\label{Lemma0-MSS-BDD}
	Let the hypotheses of Theorem \ref{theorem-1} hold. 
	Then, we have,
	{\small\begin{align*}
	&\mathbb{E}\left[\left\|\mathbf{x}(k)-\mathbf{x}^{o}\right\|^2\right]\le q_{k_2}(N,d,\alpha_0,c_0)+4\frac{\|\nabla F(\mathbf{x}^{o})\|^2}{\mu^2}\nonumber\\&+\frac{\sqrt{N}s_{1}(P)M\alpha_0c_0^2}{8\delta}+\frac{Ns_{1}^{2}(P)M^{2}\alpha_0^2c_0^4}{16(1+4\delta)}\nonumber\\&+\frac{d\alpha_0^2\left(2c_{v}N\left\|\mathbf{x}^{o}\right\|^{2}+N\sigma_{v}^{2}\right)}{c_0^{2}(1-2\delta)}+\frac{\alpha_0^2c_0^2\sqrt{N}s_{1}(P)L\left\|\nabla F(\mathbf{x}^{o})\right\|}{1+2\delta}\nonumber\\&+\frac{N\alpha_{0}^{2}c_{0}^{4}s_{2}(P)}{1+4\delta}+\frac{4\alpha_{0}^{2}c_{0}^{2}Ns_{1}(P)}{1+2\delta}\left\|\nabla F\left(\mathbf{x}^{o}\right)\right\|^{2}\nonumber\\&\doteq q_{\infty}(N,d,\alpha_0,c_0),
	\end{align*}}
	where {\small$\mathbb{E}\left[\left\|\mathbf{x}(k_2)-\mathbf{x}^{o}\right\|^{2}\right] \le q_{k_2}(N,d,\alpha_0,c_0)$, $k_2=\max\{k_0,k_1\}$, $k_0 = \inf\{k|\mu^{2}\alpha_{k}^2 < 1\}$} and {\small$k_{1}=\inf\left\{k | \frac{\mu}{2} > \frac{\sqrt{N}}{4}s_{1}(P)Mc_{k}^2+\frac{2dc_{v}\alpha_{k}}{c_{k}^{2}}+4\alpha_{k}c_{k}^{2}Ns_{1}(P)L^{2}\right\}$}.
\end{Lemma}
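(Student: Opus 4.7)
The plan is to establish the uniform-in-$k$ bound on $\mathbb{E}[\|\mathbf{x}(k)-\mathbf{x}^o\|^2]$ by deriving a one-step recursion of the form $a_{k+1}\le (1-\mu\alpha_k)\, a_k + s_k$ for $k\ge k_2$, with $\sum_k s_k<\infty$, and then summing. Here $\mathbf{x}^o=\mathbf{1}_N\otimes\mathbf{x}^\star$ is the stacked optimum; the identity $\mathbf{W}_k \mathbf{x}^o=\mathbf{x}^o$ (since the random Laplacian annihilates consensus vectors) allows the error vector to be written, starting from \eqref{eq:0update_rule}, as
\[
\mathbf{x}(k+1)-\mathbf{x}^o=\mathbf{W}_k(\mathbf{x}(k)-\mathbf{x}^o)-\alpha_k\bigl(\nabla F(\mathbf{x}(k))-\nabla F(\mathbf{x}^o)\bigr)-\alpha_k\nabla F(\mathbf{x}^o)-\alpha_k c_k\mathbf{b}(\mathbf{x}(k))-\alpha_k\mathbf{h}(\mathbf{x}(k)).
\]

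I would then take norms, square, and compute $\mathbb{E}[\cdot\mid\mathcal{F}_k]$. Four ingredients feed into the recursion: (i) a spectral-norm bound $\|\mathbf{W}_k\|\le 1+O(\rho_k^2)$, which holds thanks to Assumption~\ref{as:3} that bounds $\rho_0^2$ by $4N^2/\lambda_2(\overline{\mathbf R})$; (ii) strong convexity and Lipschitzness giving $\langle \mathbf{x}(k)-\mathbf{x}^o,\nabla F(\mathbf{x}(k))-\nabla F(\mathbf{x}^o)\rangle\ge \mu\|\mathbf{x}(k)-\mathbf{x}^o\|^2$; (iii) the Hessian-Lipschitz Assumption~\ref{as:a1} together with the twicing construction in \eqref{eq:KW_grad}, which cancels the second-order Taylor term and leaves only a third-order remainder, yielding $\|c_k\mathbf{b}(\mathbf{x}(k))\|\le \tfrac{1}{4}\sqrt{N}\,s_1(P)\,M\,c_k^2$; (iv) the zero-mean property $\mathbb{E}[\mathbf{h}(\mathbf{x}(k))\mid\mathcal{F}_k]=0$ from \eqref{eq:0noise_2}, and a second-moment split of $\mathbf{h}$ into a ``residual finite-difference of the true $f_i$'s'' piece (controlled by $c_k^2 s_2(P)$ and by $s_1(P)\|\nabla F(\mathbf{x}^o)\|^2$ via Taylor) and the amplified measurement-noise piece $v_i\mathbf{z}_{i,k}/c_k$, which by \eqref{eq:0noise_4} is bounded by $\tfrac{d}{c_k^2}(c_v\|\mathbf{x}(k)\|^2+\sigma_v^2)$ and then split using $\|\mathbf{x}(k)\|^2\le 2\|\mathbf{x}(k)-\mathbf{x}^o\|^2+2\|\mathbf{x}^o\|^2$.

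Combining the four ingredients produces a recursion whose coefficient of $\|\mathbf{x}(k)-\mathbf{x}^o\|^2$ has the form $1-2\mu\alpha_k+\mu^2\alpha_k^2+\alpha_k(\tfrac{\sqrt N}{4}s_1(P)Mc_k^2+\tfrac{2dc_v\alpha_k}{c_k^2}+4\alpha_k c_k^2 N s_1(P) L^2)$; this is precisely why the thresholds $k_0$ and $k_1$ in the lemma statement are introduced: for $k\ge k_2=\max\{k_0,k_1\}$ the bracketed perturbation is dominated by $\mu/2$ and $\mu^2\alpha_k^2<1$, so the coefficient collapses to at most $1-\mu\alpha_k$. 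The cross-term $-2\alpha_k\langle \mathbf{x}(k)-\mathbf{x}^o,\nabla F(\mathbf{x}^o)\rangle$ is handled by Young's inequality so that $\|\nabla F(\mathbf{x}^o)\|^2$ contributes $4\|\nabla F(\mathbf{x}^o)\|^2/\mu^2$ after summation. Each remaining driving term feeds a standard scalar-recursion inequality (of the kind $a_{k+1}\le(1-\mu\alpha_k)a_k+s_k$, $\sum s_k<\infty$, see e.g.\ Polyak-type lemmas), whose steady-state bound is exactly $\sum_{k\ge k_2}s_k/(\mu\alpha_k)$ up to the required constants; identifying the summable tails $\sum_k\alpha_k c_k^2=\Theta(\alpha_0c_0^2/\delta)$, $\sum_k\alpha_k^2 c_k^4=\Theta(\alpha_0^2 c_0^4/(1+4\delta))$, $\sum_k\alpha_k^2/c_k^2$ (finite by Assumption~\ref{as:a3} with $\delta<1/2$), and $\sum_k\alpha_k^2 c_k^2$, and collecting constants, yields every summand of $q_\infty(N,d,\alpha_0,c_0)$ as stated.

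The main obstacle is the bias--variance bookkeeping in the second-moment of $\mathbf{h}(\mathbf{x}(k))$: one must carefully separate the deterministic finite-difference-of-$f_i$ component (where the twicing trick leaves a cubic Taylor remainder of order $c_k^2$, and the linear term around $\mathbf{x}^o$ spawns a $\|\nabla F(\mathbf{x}^o)\|^2$ contribution amplified by $s_1(P)$) from the $\mathcal{SZO}$ measurement-noise component, which is inflated by the $1/c_k$ divisor and is the sole reason a strict upper bound $\delta<1/2$ on the smoothing exponent is needed for the recursion's driving term to be summable. Balancing these opposing $c_k$-dependencies against the $-\mu\alpha_k$ contraction is what forces the exact thresholds $k_0,k_1$ and the specific constants appearing in $q_\infty$; once the recursion is established, the induction itself is routine.
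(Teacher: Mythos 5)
Your proposal is correct and follows essentially the same route as the paper's proof: the same error recursion with $\mathbf{W}_k\mathbf{x}^o=\mathbf{x}^o$, the same $O(c_k^2)$ bias bound from the twicing construction and Hessian Lipschitzness, the same zero-mean/second-moment split of $\mathbf{h}$, the same thresholds $k_0,k_1$ to absorb the perturbation of the contraction coefficient into $\mu\alpha_k/2$, and the same separation of the non-summable $O(\alpha_k)\|\nabla F(\mathbf{x}^o)\|^2$ term (yielding the $4\|\nabla F(\mathbf{x}^o)\|^2/\mu^2$ steady state) from the summable driving terms that produce the remaining constants in $q_\infty$. The only cosmetic difference is that you invoke strong-convexity monotonicity of $\nabla F$ on the cross term, whereas the paper folds the integrated Hessian $\mathbf{H}_k$ into the operator norm $\|\mathbf{W}_k-\alpha_k\mathbf{H}_k\|\le 1-\mu\alpha_k$; the two are interchangeable here.
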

\begin{proof}
	{\small\begin{align}
	\label{eq:update_rule1}
	&\mathbf{x}(k+1) = \mathbf{W}_{k}\mathbf{x}(k)\nonumber\\&-\frac{\alpha_{k}}{c_{k}}\left(c_{k}\nabla F(\mathbf{x}(k))+c_{k}^{2}\mathbf{b}(\mathbf{x}(k))+c_{k}\mathbf{h}(\mathbf{x}(k))\right).
	\end{align}}
	Denote $\mathbf{x}^{o} = \mathbf{1}_{N}\otimes x^{\ast}$.
	Then, we have,
	{\small\begin{align}
	\label{eq:0update_rule2}
	&\mathbf{x}(k+1)-\mathbf{x}^{o} = \mathbf{W}_{k}(\mathbf{x}(k)-\mathbf{x}^{o})\nonumber\\&-\alpha_{k}\left(\nabla F(\mathbf{x}(k))-\nabla F(\mathbf{x}^{o})\right)\nonumber\\&-\alpha_{k}\mathbf{h}(\mathbf{x}(k))-\alpha_{k}\nabla F(\mathbf{x}^{o})-\alpha_{k}c_{k}\mathbf{b}(\mathbf{x}(k)).
	\end{align}}
	Moreover, note that, $\mathbb{E}\left[\mathbf{h}(\mathbf{x}(k))\mid\mathcal{F}_{k}\right]=0$.
	By Leibnitz rule, we have,
	{\small\begin{align}
	\label{eq:0mvt}
	&\nabla F(\mathbf{x}(k))-\nabla F(\mathbf{x}^{o}) \nonumber\\&= \left[\int_{s=0}^{1}\nabla^{2}F\left(\mathbf{x}^{o}+s(\mathbf{x}(k)-\mathbf{x}^{o})\right)ds\right]\left(\mathbf{x}(k)-\mathbf{x}^{o}\right)\nonumber\\
	&=\mathbf{H}_{k}\left(\mathbf{x}(k)-\mathbf{x}^{o}\right).
	\end{align}}
	By Lipschitz continuity of the gradients and strong convexity of $f(\cdot)$, we have that $L\mathbf{I}\succcurlyeq\mathbf{H}_{k}\succcurlyeq\mu\mathbf{I}$. 
	Denote by $\boldsymbol{\zeta}(k) = \mathbf{x}(k)-\mathbf{x}^{o}$
	and by $\boldsymbol{\xi}(k) = \left(\mathbf{W}_{k}-\alpha_{k}\mathbf{H}_{k}\right)(\mathbf{x}(k)-\mathbf{x}^{o})
	-\alpha_k \,\nabla F(\mathbf{x}^{o})$.
	Then, there holds:
	{\small\begin{align}
		&\mathbb{E}[\,\|\boldsymbol{\zeta}(k+1)\|^2 \,|\,\mathcal{F}_k\,]
		\leq
		\mathbb{E}\left[\|\boldsymbol{\xi}(k)\|^2|\mathcal{F}_{k}\right] \nonumber \\
		&-2 \alpha_k c_{k}\,\mathbb{E}\left[{\boldsymbol{\xi}}(k)^\top|\,\mathcal{F}_k\,\right]
		\mathbb{E}[\,\mathbf{h}(\mathbf{x}(k)) \,|\,\mathcal{F}_k\,] +
		\alpha_k^2c_{k}^{2} \,\mathbb{E}[\,\|\mathbf{h}(\mathbf{x}(k))\|^2 \,|\,\mathcal{F}_k\,] \nonumber \\
		&+\alpha_{k}^{2}c_{k}^{2}\mathbf{b}^{\top}(\mathbf{x}(k))\mathbf{b}(\mathbf{x}(k))-2\alpha_{k}c_{k}\mathbf{b}^{\top}(\mathbf{x}(k))\mathbb{E}\left[{\boldsymbol{\xi}}(k)|\mathcal{F}_{k}\right]\nonumber\\
		&+\mathbf{b}\left(\mathbf{x}(k)\right)^\top \mathbb{E}\left[\mathbf{h}(\mathbf{x}(k))|\mathcal{F}_k\right].
		\label{eqn-0combine-1}
		\end{align}}
	We use the following inequalities:
	{\small\begin{align}
		\label{eq:bias1}
		&c_{k}\mathbf{b}(\mathbf{x}_{i}(k))\nonumber\\& = \frac{c_{k}}{2}\mathbb{E}\left[\langle\mathbf{z}_{i,k},\nabla^{2}f_{i}\left(\mathbf{x}_{i}(k)+\frac{\left(1-\theta_1\right)}{2}c_{k}\mathbf{z}_{i,k}\right)\mathbf{z}_{i,k}\rangle\mathbf{z}_{i,k}|\mathcal{F}_{k}\right]\nonumber\\&-\frac{c_{k}}{2}\mathbb{E}\left[\langle\mathbf{z}_{i,k},\nabla^{2}f_{i}\left(\mathbf{x}_{i}(k)+\left(1-\theta_2\right)c_{k}\mathbf{z}_{i,k}\right)\mathbf{z}_{i,k}\rangle\mathbf{z}_{i,k}|\mathcal{F}_{k}\right]\nonumber\\
		&\Rightarrow c_{k}\left\|\mathbf{b}(\mathbf{x}_{i}(k))\right\|\le \frac{c_{k}^2}{4}Ms_{1}(P).
		\end{align}}
	{\small\begin{align}
		\label{eq:cross_term}
		&-\mathbf{b}^{\top}(\mathbf{x}(k))\mathbb{E}\left[{\boldsymbol{\xi}}(k)|\mathcal{F}_{k}\right]\nonumber\\
		&=-2\mathbf{b}^{\top}(\mathbf{x}(k))\left(\mathbf{I}-\beta_{k}\overline{\mathbf{R}}-\alpha_{k}\mathbf{H}_{k}\right)(\mathbf{x}(k)-\mathbf{x}^{o})\nonumber\\&+2\alpha_{k}\mathbf{b}^{\top}(\mathbf{x}(k))\nabla F(\mathbf{x}^{o})\nonumber\\&\le 2\left\|\mathbf{b}(\mathbf{x}(k))\right\|\left\|\mathbf{I}-\beta_{k}\overline{\mathbf{R}}-\alpha_{k}\mathbf{H}_{k}\right\|\left\|\mathbf{x}(k)-\mathbf{x}^{o}\right\|\nonumber\\&+2\alpha_{k}\left\|\mathbf{b}(\mathbf{x}(k))\right\|\left\|\nabla F(\mathbf{x}^{o})\right\|\nonumber\\
		&\le \frac{\sqrt{N}}{4}s_{1}(P)Mc_{k}\left(1-\mu\alpha_{k}\right)\left(1+\left\|\mathbf{x}(k)-\mathbf{x}^{o}\right\|^{2}\right)\nonumber\\&+\alpha_{k}c_{k}\frac{\sqrt{N}}{2}s_{1}(P)M\left\|\nabla F(\mathbf{x}^{o})\right\|\nonumber\\
		&\le \frac{\sqrt{N}}{4}s_{1}(P)Mc_{k}+\frac{\sqrt{N}}{4}s_{1}(P)Mc_{k}\left\|\mathbf{x}(k)-\mathbf{x}^{o}\right\|^{2}\nonumber\\&+\alpha_{k}c_{k}\frac{\sqrt{N}}{2}s_{1}(P)M\left\|\nabla F(\mathbf{x}^{o})\right\|,
		\end{align}}
	{\small\begin{align}
		\label{eq:bias}
		\mathbf{b}^{\top}(\mathbf{x}(k))\mathbf{b}(\mathbf{x}(k)) \le \frac{N}{16}s_{1}^{2}(P)M^{2}c_{k}^{2},
		\end{align}}
	{\small\begin{align}
		\label{eq:error}
		&\,\mathbb{E}[\,\|\mathbf{h}(\mathbf{x}(k))\|^2 \,|\,\mathcal{F}_k\,] = \mathbb{E}\left[\left\|\mathbf{v}_{\mathbf{z}}(k;\mathbf{x}(k))\right\|^{2}|\mathcal{F}_{k}\right]\nonumber\\&+\mathbb{E}\left[\left\|\mathbf{g}(\mathbf{x}(k))-\mathbb{E}\left[\widehat{\mathbf{g}}(\mathbf{x}(k))\mid\mathcal{F}_{k}\right]\right\|^{2}\mid\mathcal{F}_{k}\right],
		\end{align}}
	{\small\begin{align}
		\label{eq:error1}
		&\mathbb{E}\left[\left\|\mathbf{g}(\mathbf{x}(k))-\mathbb{E}\left[\widehat{\mathbf{g}}(\mathbf{x}(k))\mid\mathcal{F}_{k}\right]\right\|^{2}\mid\mathcal{F}_{k}\right]\nonumber\\
		&\le\mathbb{E}\left[\left\|\mathbf{g}(\mathbf{x}(k))\right\|^{2}\mid\mathcal{F}_{k}\right]\nonumber\\&\le 4Ns_{1}(P)L^{2}\left\|\mathbf{x}(k)-\mathbf{x}^{o}\right\|^{2}+4Ns_{1}(P)\left\|\nabla F\left(\mathbf{x}^{o}\right)\right\|^{2}+2Nc_{k}^{2}s_{2}(P),
		\end{align}}
	and 
	{\small\begin{align}
		\label{eq:variance0}
		&\mathbb{E}\left[\left\|\mathbf{v}_{\mathbf{z}}(k;\mathbf{x}(k))\right\|^{2}|\mathcal{F}_{k}\right]\le dc_{v}\left\|\mathbf{x}(k)\right\|^{2}+dN\sigma_{v}^{2}\nonumber\\
		&\le 2dc_{v}\left\|\mathbf{x}(k)-\mathbf{x}^{o}\right\|^{2}+\left(2dc_{v}\left\|\mathbf{x}^{o}\right\|^{2}+N\sigma_{v}^{2}\right).
		\end{align}}
	Then from \eqref{eqn-0combine-1}, we have,
	{\small\begin{align}
	\label{eq:eqn-combine-2}
	&\mathbb{E}[\,\|\boldsymbol{\zeta}(k+1)\|^2 \,|\,\mathcal{F}_k\,]\le \mathbb{E}\left[\|\boldsymbol{\xi}(k)\|^2|\mathcal{F}_{k}\right]\nonumber\\
	&+\frac{\sqrt{N}}{4}s_{1}(P)M\alpha_{k}c_{k}^2\|\boldsymbol{\zeta}(k)\|^2+ 2\frac{d\alpha_{k}^{2}}{c_{k}^{2}}c_{v}\|\boldsymbol{\zeta}(k)\|^2\nonumber\\
	&+\frac{d\alpha_{k}^{2}}{c_{k}^{2}}\left(2c_{v}\left\|\mathbf{x}^{o}\right\|^{2}+N\sigma_{v}^{2}\right)+\frac{\sqrt{N}}{4}s_{1}(P)M\alpha_{k}c_{k}^2\nonumber\\
	&+\frac{N}{16}s_{1}^{2}(P)M^{2}\alpha_{k}^{2}c_{k}^{4}+\alpha_{k}^{2}c_{k}^2\frac{\sqrt{N}}{2}s_{1}(P)M\left\|\nabla F(\mathbf{x}^{o})\right\|\nonumber\\
	&+4\alpha_{k}^{2}c_{k}^{2}Ns_{1}(P)L^{2}\|\boldsymbol{\zeta}(k)\|^2+4\alpha_{k}^{2}c_{k}^{2}Ns_{1}(P)\left\|\nabla F\left(\mathbf{x}^{o}\right)\right\|^{2}\nonumber\\&+2N\alpha_{k}^{2}c_{k}^{4}s_{2}(P).
	\end{align}}
	
	\noindent We next bound $\mathbb{E}\left[\|\boldsymbol{\xi}(k)\|^2|\,\mathcal{F}_k\,\right]$.
	Note that
	$\|\mathbf{W}_k -\alpha_k \,\boldsymbol{H}_k\| \leq 1-\mu\,\alpha_k$.
	Therefore, we have:
	\begin{equation}
	\label{0eqn-xi-zeta}
	\|\boldsymbol{\xi}(k)\| \leq (1-\mu\,\alpha_k)\,\|\boldsymbol{\zeta}(k)\|
	+ \alpha_k\, \|\nabla F(\mathbf{x}^{o})\|.
	\end{equation}
	
	\noindent We now
	use the following inequality:
	{\small\begin{align}
	\label{eq:cool_ineq_1}
	(a+b)^{2} \leq \left(1+\theta\right)a^{2}+\left(1+\frac{1}{\theta}\right)b^{2},
	\end{align}}
	for any $a,b \in \mathbb{R}$ and $\theta > 0$.
	We set $\theta=\mu\alpha_{k}$.
	Using the inequality \eqref{eq:cool_ineq_1} in \eqref{0eqn-xi-zeta} and we have $\forall k \geq k_0$, where $k_0 = \inf\{k|\mu^{2}\alpha_{k}^2 < 1\}$:
	{\small\begin{align}
		\label{eq:update_rule4}
		&\mathbb{E}\left[\left\|\boldsymbol{\xi}(k)\right\|^{2}|\mathcal{F}_{k}\right]
		\le \left(1+\mu\alpha_{k}\right)(1-\alpha_k\mu)^2\left\|\boldsymbol{\zeta}(k)\right\|^2\nonumber\\
		&+\left(1+\frac{1}{\mu\alpha_{k}}\right)\alpha_{k}^{2}\|\nabla F(\mathbf{x}^{o})\|^2\nonumber\\
		&\le (1-\alpha_k\mu)\left\|\boldsymbol{\zeta}(k)\right\|^2+2\frac{\alpha_k}{\mu}\|\nabla F(\mathbf{x}^{o})\|^2.
		\end{align}}
	
	Using \eqref{eq:update_rule4} in \eqref{eq:eqn-combine-2}, we have for all $k\geq k_0$
	{\small\begin{align}
		\label{eq:combine-3}
		&\mathbb{E}[\,\|\boldsymbol{\zeta}(k+1)\|^2 \,|\,\mathcal{F}_k\,]\nonumber\\&
		\le \left(1-\alpha_k\mu+\frac{\sqrt{N}}{4}s_{1}(P)M\alpha_{k}c_{k}^2+2\frac{d\alpha_{k}^{2}}{c_{k}^{2}}c_{v}\right.\nonumber\\&\left.+4\alpha_{k}^{2}c_{k}^{2}Ns_{1}(P)L^{2}\right)\times\left\|\boldsymbol{\zeta}(k)\right\|^2\nonumber\\
		&+\frac{d\alpha_{k}^{2}}{c_{k}^{2}}\left(2c_{v}\left\|\mathbf{x}^{o}\right\|^{2}+N\sigma_{v}^{2}\right)+\frac{\sqrt{N}}{4}s_{1}(P)L\alpha_{k}c_{k}^2\nonumber\\
		&+\frac{N}{16}s_{1}^{2}(P)M^{2}\alpha_{k}^{2}c_{k}^{4}+2\frac{\alpha_k}{\mu}\|\nabla F(\mathbf{x}^{o})\|^2+2N\alpha_{k}^{2}c_{k}^{4}s_{2}(P)\nonumber\\&+\alpha_{k}^{2}c_{k}^2\frac{\sqrt{N}}{2}s_{1}(P)M\left\|\nabla F(\mathbf{x}^{o})\right\|+4\alpha_{k}^{2}c_{k}^{2}Ns_{1}(P)\left\|\nabla F\left(\mathbf{x}^{o}\right)\right\|^{2}.
		\end{align}}
	
	\noindent Define $k_1$ as follows:
	{\small\begin{align*}
	k_{1}=\inf\left\{k | \frac{\mu}{2} > \frac{\sqrt{N}}{4}s_{1}(P)Mc_{k}^2+\frac{2dc_{v}\alpha_{k}}{c_{k}^{2}}+4\alpha_{k}c_{k}^{2}Ns_{1}(P)L^{2}\right\}.
	\end{align*}}
	It is to be noted that $k_1$ is necessarily finite as $c_{k}\to 0$ and $\alpha_{k}c_{k}^{-2}\to 0$ as $k\to\infty$. We proceed by using the following auxiliary lemma.
	\begin{Lemma}\label{lemma:technical0}
			Let $a_k\in (0,1)$, $u\leq 0$ and $d_k \geq 0$, for all $k\geq 1$. If $q_{k_0}\geq 0$ and for all $k\geq k_0$ there holds $q_{k+1}\leq (1-a_k)q_k + a_k u + d_k$, then, for all $k\geq k_0$,
			\begin{equation}\label{eq:lemmatechnical0_result}
			q_{k+1} \leq q_{k_0}+ u + \sum_{l=l_0}^k d_l.
			\end{equation}
		\end{Lemma}
		\begin{IEEEproof}
			Introduce $p(k,l)=(1-a_k)\cdots (1-a_l)$, for $l\leq k$ and also $p(k,k+1)=1$. It is easy to see that, for every $k\geq k_0$, $q_{k+1}\leq p(k,k_0) q_{k_0}+ u \sum_{l=k_0}^k p(k,l+1)a_l + \sum_{l=k_0}^k p(k,l+1) d_l$. Note now that $p(k,l+1)a_l = p(k,l+1)-p(k,l)$, and hence $\sum_{l=k_0}^k p(k,l+1)a_l = 1- p(k,k_0)\leq 1$. Using the latter together with the fact that $p(k,l+1)\leq 1$ proves the claim of the lemma.
		\end{IEEEproof} 
		Applying Lemma~\ref{lemma:technical0} to $q_k=\mathbb{E}\left[\|\boldsymbol{\zeta}(k)\|^2 \right]$, $a_k=\frac{\mu\alpha_k}{2}$, $u=4 \frac{\|\nabla F(x^o)\|^2}{\mu^2}$, and $d_k$ defined as the remaining term in~\eqref{eq:combine-3} we have, $\forall k \geq \max\left\{k_0,k_1\right\}\doteq k_2$,
		{\small\begin{align}
			\label{eq:combine-4} &\mathbb{E}\left[\|\boldsymbol{\zeta}(k+1)\|^2 \right] \le q_{k_2}(N,d,\alpha_0,c_0)+4\frac{\|\nabla F(\mathbf{x}^{o})\|^2}{\mu^2}\nonumber\\&+\frac{\sqrt{N}s_{1}(P)M\alpha_0c_0^2}{8\delta}+\frac{Ns_{1}^{2}(P)M^{2}\alpha_0^2c_0^4}{16(1+4\delta)}\nonumber\\&+\frac{d\alpha_0^2\left(2c_{v}N\left\|\mathbf{x}^{o}\right\|^{2}+N\sigma_{v}^{2}\right)}{c_0^{2}(1-2\delta)}+\frac{\alpha_0^2c_0^2\sqrt{N}s_{1}(P)L\left\|\nabla F(\mathbf{x}^{o})\right\|}{1+2\delta}\nonumber\\&+\frac{2N\alpha_{0}^{2}c_{0}^{4}s_{2}(P)}{1+4\delta}+\frac{4\alpha_{0}^{2}c_{0}^{2}Ns_{1}(P)}{1+2\delta}\left\|\nabla F\left(\mathbf{x}^{o}\right)\right\|^{2}\nonumber\\&\doteq q_{\infty}(N,d,\alpha_0,c_0),
			\end{align}}
\end{proof}
\noindent From \eqref{eq:combine-4}, we have that $\mathbb{E}\left[\left\|\mathbf{x}(k+1)-\mathbf{x}^{o}\right\|^{2}\right]$ is finite and bounded from above, where $\mathbb{E}\left[\left\|\mathbf{x}(k_2)-\mathbf{x}^{o}\right\|^{2}\right] \le q_{k_2}(N,d,\alpha_0,c_0)$. From the boundedness of $\mathbb{E}\left[\left\|\mathbf{x}(k)-\mathbf{x}^{o}\right\|^2\right]$, we have also established the boundedness of $\mathbb{E}\left[\left\|\nabla F(\mathbf{x}(k))\right\|^2\right]$ and $\mathbb{E}\left[\left\|\mathbf{x}(k)\right\|^2\right]$. 

\noindent With the above development in place, we can bound the variance of the noise process $\{\mathbf{v}_{\mathbf{z}}(k;\mathbf{x}(k))\}$ as follows:
{\small\begin{align}
\label{eq:noise_variance_condition}
&\mathbb{E}\left[\left\|\mathbf{v}_{\mathbf{z}}(k;\mathbf{x}(k))\right\|^{2}|\mathcal{F}_{k}\right]\le 2dc_{v}q_{\infty}(N,d,\alpha_0,c_0)\nonumber\\&+2Nd\underbrace{\left(\sigma_{v}^{2}+\left\|\mathbf{x}^{\ast}\right\|^{2}\right)}_{\text{$\sigma_{1}^{2}$}}.
\end{align}}

\noindent We also have the following bound:
{\small\begin{align*}
	&\mathbb{E}\left[\left\|\mathbf{g}(\mathbf{x}(k))-\mathbb{E}\left[\widehat{\mathbf{g}}(\mathbf{x}(k))\mid\mathcal{F}_{k}\right]\right\|^{2}\mid\mathcal{F}_{k}\right]\nonumber\\
	&\le 4Ns_{1}(P)L^{2}q_{\infty}(N,d,\alpha_0,c_0)+4Ns_{1}(P)\left\|\nabla F\left(\mathbf{x}^{o}\right)\right\|^{2}+2Nc_{k}^{2}s_{2}(P).
	\end{align*}}

\noindent We now study the disagreement of the optimizer sequence $\{\mathbf{x}_{i}(k)\}$ at a node $i$ with respect to the~(hypothetically available) network averaged optimizer sequence, i.e., $\overline{\mathbf{x}}(k)=\frac{1}{N}\sum_{i=1}^{N}\mathbf{x}_{i}(k)$. Define the disagreement at the $i$-th node as $\widetilde{\mathbf{x}}_{i}(k)=\mathbf{x}_{i}(k)-\overline{\mathbf{x}}(k)$. The vectorized version of the disagreements $\widetilde{\mathbf{x}}_{i}(k),~i=1,\cdots,N$, can then be written as $\widetilde{\mathbf{x}}(k)=\left(\mathbf{I}-\mathbf{J}\right)\mathbf{x}(k)$, where $\mathbf{J}=\frac{1}{N}\left(\mathbf{1}_{N}\otimes\mathbf{I}_{d}\right)\left(\mathbf{1}_{N}\otimes\mathbf{I}_{d}\right)^{\top}=\frac{1}{N}\mathbf{1}_{N}\mathbf{1}_{N}^{\top}\otimes\mathbf{I}_{d}$.
We have the following Lemma:
\begin{Lemma}
	\label{lemma-0disag-bound}
	Let the hypotheses of Theorem \ref{theorem-1} hold. Then, we have
	{\small\begin{align*}
	&\mathbb{E}\left[\left\|\widetilde{\mathbf{x}}(k+1)\right\|^{2}\right] \le Q_{k}+ \frac{4\Delta_{1,\infty}\alpha_{0}^{2}}{\lambda_{2}^{2}\left(\overline{\mathbf{R}}\right)\beta_0^{2}c_0^2(k+1)^{2-2\tau-2\delta}}\nonumber\\&=O\left(\frac{1}{k^{2-2\delta-2\tau}}\right),
	\end{align*}}
	where $Q_k$ is a term which decays faster than $(k+1)^{-2+2\tau+2\delta}$.
\end{Lemma}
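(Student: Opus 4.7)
The plan is to reduce the lemma to a scalar recursion for $y_k := \mathbb{E}[\|\widetilde{\mathbf{x}}(k)\|^2]$ that is contractive at rate $\Theta(\beta_k)$ with a source term of order $\alpha_k^2/c_k^2$, then resolve that recursion with a standard stochastic-approximation argument. First I would left-multiply the vectorized update~\eqref{eq:0update_rule} by $\mathbf{I}-\mathbf{J}$. Because each realization of $\mathbf{R}(k)$ has zero row/column sums, $\mathbf{W}_k\mathbf{J} = \mathbf{J}\mathbf{W}_k = \mathbf{J}$, so $(\mathbf{I}-\mathbf{J})\mathbf{W}_k\mathbf{x}(k) = \mathbf{W}_k(\mathbf{I}-\mathbf{J})\mathbf{x}(k) = \mathbf{W}_k\widetilde{\mathbf{x}}(k)$, yielding
$$
\widetilde{\mathbf{x}}(k+1) = \mathbf{W}_k\widetilde{\mathbf{x}}(k) - \alpha_k(\mathbf{I}-\mathbf{J})\bigl[\nabla F(\mathbf{x}(k)) + c_k\mathbf{b}(\mathbf{x}(k)) + \mathbf{h}(\mathbf{x}(k))\bigr].
$$
I would then apply $\|a+b\|^2 \le (1+\theta_k)\|a\|^2 + (1+\theta_k^{-1})\|b\|^2$ with $\theta_k := \beta_k\lambda_2(\overline{\mathbf{R}})/2$, which separates the contractive consensus part from the driving perturbation.

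For the contractive part I would write $\mathbf{W}_k = \overline{\mathbf{W}}_k + (\mathbf{W}_k - \overline{\mathbf{W}}_k)$ with $\overline{\mathbf{W}}_k := \mathbf{I} - \beta_k\overline{\mathbf{R}}\otimes\mathbf{I}_d$. Conditioning on $\mathcal{F}_k$ and using $\mathbb{E}[\mathbf{W}_k - \overline{\mathbf{W}}_k \mid \mathcal{F}_k] = 0$, the cross term vanishes, and since $\widetilde{\mathbf{x}}(k)$ is orthogonal to $\mathrm{span}(\mathbf{1}_N\otimes\mathbf{I}_d)$,
$$
\mathbb{E}\bigl[\|\mathbf{W}_k\widetilde{\mathbf{x}}(k)\|^2 \mid \mathcal{F}_k\bigr] \le \bigl[(1 - \beta_k\lambda_2(\overline{\mathbf{R}}))^2 + \mathbb{E}[\|\mathbf{W}_k - \overline{\mathbf{W}}_k\|^2\mid\mathcal{F}_k]\bigr]\|\widetilde{\mathbf{x}}(k)\|^2.
$$
By~\eqref{eq:laplace_res} the fluctuation term is $O(k^{-\tau-\epsilon})$, which is strictly lower order than $\beta_k$ thanks to the $\epsilon>0$ gap in~\eqref{eq:time_decay}; combined with $\theta_k = \beta_k\lambda_2(\overline{\mathbf{R}})/2$, this yields a net multiplier bounded by $1 - \beta_k\lambda_2(\overline{\mathbf{R}})/2$ for $k$ sufficiently large.

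For the perturbation part I would use $\|\mathbf{I}-\mathbf{J}\|\le 1$, the bias bound~\eqref{eq:bias1} so that $c_k\|\mathbf{b}(\mathbf{x}(k))\|=O(c_k^2)$, the noise bound~\eqref{eq:0noise_4}, and the moment inequality of Lemma~\ref{Lemma0-MSS-BDD}, which bounds $\mathbb{E}[\|\mathbf{x}(k)\|^2]$ by $q_\infty(N,d,\alpha_0,c_0)$ uniformly in $k$. The dominant contribution comes from $\alpha_k^2\mathbb{E}[\|\mathbf{h}(\mathbf{x}(k))\|^2]$, which behaves as $\alpha_k^2\Delta_{1,\infty}/c_k^2$; the gradient and bias contributions $\alpha_k^2\|\nabla F(\mathbf{x}(k))\|^2 = O(\alpha_k^2)$ and $\alpha_k^2c_k^2\|\mathbf{b}\|^2 = O(\alpha_k^2c_k^2)$ decay strictly faster and are absorbed into $Q_k$. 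The $(1+\theta_k^{-1})$ factor amplifies the dominant source by $2/(\beta_k\lambda_2(\overline{\mathbf{R}}))$, producing an overall driving term of order $\alpha_k^2/(c_k^2\beta_k) = O(k^{-(2-2\delta-\tau)})$.

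Taking total expectation yields $y_{k+1} \le (1-\beta_k\lambda_2(\overline{\mathbf{R}})/2)y_k + C\alpha_k^2/(c_k^2\beta_k) + (\text{faster-decaying})$, to which I would apply a standard asymptotic lemma of the type $u_{k+1}\le(1-a/k^\tau)u_k+b/k^p \Rightarrow u_k=O(b/(a\,k^{p-\tau}))$ for $p>\tau$. With $p = 2-2\delta-\tau$ this delivers the claimed $O(k^{-(2-2\delta-2\tau)})$ rate, and the two powers of $\lambda_2(\overline{\mathbf{R}})\beta_0$ in the prefactor arise respectively from the $\theta_k^{-1}$ amplification of the source and from inverting the $\beta_k$-scaled contraction. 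The main technical obstacle is the consensus step: verifying that the random-Laplacian fluctuation $\mathbf{W}_k-\overline{\mathbf{W}}_k$ is genuinely a lower-order perturbation of the mean contraction. This hinges on the $\epsilon>0$ gap in~\eqref{eq:time_decay} together with the constraint on $\rho_0^2$ in Assumption~\ref{as:3}, which jointly guarantee $\mathbb{E}[\|\mathbf{W}_k-\overline{\mathbf{W}}_k\|^2] = o(\beta_k)$.
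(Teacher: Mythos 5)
Your proposal is correct and follows essentially the same route as the paper: the same projected recursion $\widetilde{\mathbf{x}}(k+1)=\widetilde{\mathbf{W}}_k\widetilde{\mathbf{x}}(k)-\frac{\alpha_k}{c_k}(\mathbf{I}-\mathbf{J})\mathbf{w}(k)$, the same $(1+\theta_k)$-splitting with $\theta_k=\beta_k\lambda_2(\overline{\mathbf{R}})/2$, the same treatment of the random-Laplacian fluctuation as an $O(k^{-\tau-\epsilon})=o(\beta_k)$ perturbation of the mean contraction, and the same $\alpha_k^2\Delta_{1,\infty}/(c_k^2\beta_k)$ dominant source resolved by a contraction-plus-forcing lemma (the paper's Lemma~\ref{lemma:technical1}). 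Your explicit mean-plus-fluctuation decomposition of $\mathbf{W}_k$ is just a rephrasing of the paper's direct expansion of $\mathbb{E}[\widetilde{\mathbf{W}}^2(k)]$ in~\eqref{eq:0dis5_bound}, so there is nothing substantively different to flag.
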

\noindent Lemma~\ref{lemma-0disag-bound} plays a crucial role in providing a tight bound for the bias in the gradient estimates
according to which the global average $\overline{\mathbf{x}}(k)$ evolves.
\begin{proof}
	The process $\{\widetilde{\mathbf{x}}(k)\}$ follows the recursion:
	{\small\begin{align}
	\label{eq:0dis1}
	&\widetilde{\mathbf{x}}(k+1)=\widetilde{\mathbf{W}}_{k}\widetilde{\mathbf{x}}(k)\nonumber\\&-\frac{\alpha_{k}}{c_{k}}\left(\mathbf{I}-\mathbf{J}\right)\underbrace{\left(c_{k}\nabla F(\mathbf{x}(k))+c_{k}\mathbf{h}(\mathbf{x}(k))+c_{k}^{2}\mathbf{b}\left(\mathbf{x}(k)\right)\right)}_{\text{$\mathbf{w}(k)$}},
	\end{align}}
	where $\widetilde{\mathbf{W}}_{k} = \mathbf{W}_{k}-\mathbf{J}$. 
	Then, we have,
	{\small\begin{align}
	\label{eq:0dis2}
	\left\|\widetilde{\mathbf{x}}(k+1)\right\| \le \left\|\widetilde{\mathbf{W}}_{k}\widetilde{\mathbf{x}}(k)\right\|+\frac{\alpha_{k}}{c_k}\left\|\mathbf{w}(k)\right\|.
	\end{align}}
	Using \eqref{eq:cool_ineq_1} in \eqref{eq:0dis1}, we have,
	{\small\begin{align}
	\label{eq:dis5}
	&\left\|\widetilde{\mathbf{x}}(k+1)\right\|^{2}\le \left(1+\theta_k\right)\left\|\widetilde{\mathbf{W}}_{k}\widetilde{\mathbf{x}}(k)\right\|^{2}\nonumber\\&+\left(1+\frac{1}{\theta_k}\right)\frac{\alpha_k^2}{c_{k}^{2}}\left\|\widetilde{\mathbf{w}}(k)\right\|^2.
	\end{align}}
	We, now bound the term $\mathbb{E}\left[\left\|\widetilde{\mathbf{W}}_{k}\widetilde{\mathbf{x}}(k)\right\|^{2}|\mathcal{F}_{k}\right]$.
	{\small\begin{align}
	\label{eq:0dis5_bound}
	&\mathbb{E}\left[\left\|\widetilde{\mathbf{W}}(k)\widetilde{\mathbf{x}}(k)\right\|^{2}|\mathcal{F}_{k}\right] = \widetilde{\mathbf{x}}^{\top}(k)\mathbb{E}\left[\widetilde{\mathbf{W}}^{2}(k)-\mathbf{J}|\mathcal{F}_{k}\right]\widetilde{\mathbf{x}}(k) \nonumber\\
	&=\widetilde{\mathbf{x}}^{\top}(k)\left(\mathbf{I}-2\beta_{k}\overline{\mathbf{R}}+\beta_{k}^{2}\overline{\mathbf{R}}^{2}+\widetilde{\mathbf{R}}(k)^{2}-\mathbf{J}\right)\widetilde{\mathbf{x}}(k) \nonumber\\
	&\le \left(1-2\beta_{k}\lambda_{2}\left(\overline{\mathbf{R}}\right)+\beta_{k}^{2}\lambda_{N}^2\left(\overline{\mathbf{R}}\right)\right.\nonumber\\&\left.+\frac{4N^{2}\beta_{0}\rho_{0}^{2}}{(k+1)^{\tau+\epsilon}}-4\beta_{k}^{2}N^{2}\right)\left\|\widetilde{\mathbf{x}}(k)\right\|^{2} \nonumber\\
	&\le \left(1-2\beta_{k}\lambda_{2}\left(\overline{\mathbf{R}}\right)+\frac{4N^{2}\beta_{0}\rho_{0}^{2}}{(k+1)^{\tau+\epsilon}}\right)\left\|\widetilde{\mathbf{x}}(k)\right\|^{2} \nonumber\\
	&\le \left(1-\beta_{k}\lambda_{2}\left(\overline{\mathbf{R}}\right)\right)\left\|\widetilde{\mathbf{x}}(k)\right\|^{2},
	\end{align}}
	where the last inequality follows from assumption \ref{as:a3}.
	Then, we have,
	{\small\begin{align}
	\label{eq:dis6}
	&\mathbb{E}\left[\left\|\widetilde{\mathbf{x}}(k+1)\right\|^{2}|\mathcal{F}_{k}\right]\leq \left(1+\theta_k\right)(1-\beta_{k}\lambda_{2}\left(\overline{\mathbf{R}}\right))\left\|\widetilde{\mathbf{x}}(k)\right\|^{2}\nonumber\\&+\left(1+\frac{1}{\theta_k}\right)\frac{\alpha_k^2}{c_k^{2}}\mathbb{E}\left[\left\|\mathbf{w}(k)\right\|^{2}|\mathcal{F}_{k}\right],
	\end{align}}
	where 
	{\small\begin{align}
	\label{eq:wk_bound}
	&\mathbb{E}\left[\left\|\mathbf{w}(k)\right\|^{2}|\mathcal{F}_{k}\right] \le 3c_{k}^{2}\left\|\nabla F(\mathbf{x}(k))\right\|^{2}+3c_{k}^{2}\mathbb{E}\left[\left\|\mathbf{h}(\mathbf{x}(k))\right\|^{2}|\mathcal{F}_{k}\right]\nonumber\\&+3c_{k}^{2}\left\|\mathbf{b}\left(\mathbf{x}(k)\right)\right\|^{2}\nonumber\\
	&\le 3c_{k}^{2}\left\|\nabla F(\mathbf{x}(k))\right\|^{2}+\frac{3}{16}c_{k}^{4}Ns_{1}^{2}(P)M^{2}\nonumber\\&+6dc_{v}q_{\infty}(N,d,\alpha_0,c_0)+6dN\sigma_{1}^{2}+6Nc_{k}^{4}s_{2}(P)\nonumber\\
	&+12c_{k}^{2}Ns_{1}(P)L^{2}q_{\infty}(N,d,\alpha_0,c_0)+12c_{k}^{2}Ns_{1}(P)\left\|\nabla F\left(\mathbf{x}^{o}\right)\right\|^{2}\nonumber\\
	&\Rightarrow \mathbb{E}\left[\left\|\mathbf{w}(k)\right\|^{2}\right] \le 3\left(2dc_{v}+c_{k}^{2}L^{2}(1+4Ns_{1}(P))\right)\nonumber\\&\times q_{\infty}(N,d,\alpha_0,c_0)\nonumber\\&+\frac{3}{16}c_{k}^{4}Ns_{1}^{2}(P)M^{2}+6Nc_{k}^{4}s_{2}(P)\nonumber\\&+6dN\sigma_{1}^{2}+12c_{k}^{2}Ns_{1}(P)\left\|\nabla F\left(\mathbf{x}^{o}\right)\right\|^{2}\nonumber\\
	&=\Delta_{1,\infty}+c_{k}^{2}\Delta_{2,\infty}\doteq\Delta_{k}\nonumber\\&\Rightarrow \mathbb{E}\left[\left\|\mathbf{w}(k)\right\|^{2}\right] < \infty,
	\end{align}}
	where {\small$\Delta_{1,\infty}=6dc_{v}q_{\infty}(N,d,\alpha_0,c_0)+6dN\sigma_1^{2}$} and {\small$c_{k}^{2}\Delta_{2,\infty}=\frac{3}{16}c_{k}^{4}Ns_{1}^{2}(P)M^{2}+3c_{k}^{2}L^{2}(1+4Ns_{1}(P))q_{\infty}(N,d,\alpha_0,c_0)+12c_{k}^{2}Ns_{1}(P)\left\|\nabla F\left(\mathbf{x}^{o}\right)\right\|^{2}+6Nc_{k}^{4}s_{2}(P)$}.
	With the above development in place, we then have,
	{\small\begin{align}
	\label{eq:dis6.5}
	&\mathbb{E}\left[\left\|\widetilde{\mathbf{x}}(k+1)\right\|^{2}\right]\leq \left(1+\theta_k\right)\left(1-\beta_{k}\lambda_{2}\left(\overline{\mathbf{R}}\right)\right)\left\|\widetilde{\mathbf{x}}(k)\right\|^{2}\nonumber\\&+\left(1+\frac{1}{\theta_k}\right)\frac{\alpha_k^2}{c_k^{2}}\Delta_{k}.
	\end{align}}
	In particular, we choose $\theta(k) = \frac{\beta_{k}}{2}\lambda_{2}\left(\overline{\mathbf{R}}\right)$. From \eqref{eq:dis6.5}, we have,
	{\small\begin{align}
	\label{eq:dis6.6}
	&\mathbb{E}\left[\left\|\widetilde{\mathbf{x}}(k+1)\right\|^{2}\right]\leq \left(1-\frac{\beta_{k}}{2}\lambda_{2}\left(\overline{\mathbf{R}}\right)\right)\mathbb{E}\left[\left\|\widetilde{\mathbf{x}}(k)\right\|^{2}\right]\nonumber\\&+\left(1+\frac{2}{\beta_{k}\lambda_{2}\left(\overline{\mathbf{R}}\right)}\right)\frac{\alpha_k^2}{c_{k}^{2}}\Delta_{k}\nonumber\\
	&= \left(1-\frac{\beta_{k}}{2}\lambda_{2}\left(\overline{\mathbf{R}}\right)\right)\mathbb{E}\left[\left\|\widetilde{\mathbf{x}}(k)\right\|^{2}\right]+\frac{2\alpha_k^2}{\lambda_{2}\left(\overline{\mathbf{R}}\right)c_{k}^{2}\beta_{k}}\Delta_{k}+\frac{\alpha_k^2}{c_{k}^{2}}\Delta_{k}.
	\end{align}}
	For ease of analysis, define $s(k)=\frac{\beta_{k}}{2}\lambda_{2}\left(\overline{\mathbf{R}}\right)$.
	\noindent We proceed by using the following technical lemma.
			\begin{Lemma}\label{lemma:technical1}
				If for all $k\geq k_0$ there holds
				\begin{equation}\label{eq:techlemma2_condition}
				q_{k+1}\leq (1-s_k) q_k +  \left(1+\frac{1}{s_k}\right)b_k d_k,
				\end{equation}
				where $q_{k_0}\geq 0$, $s_k \in (0,1),\,d_k,\,b_k\geq 0$ are monotonously decreasing, then, for any $k\geq m(k)\geq k_0$
				{\small\begin{align}\label{eq:techlemma2_result}
				&q_{k+1} \leq  e^{-\sum_{l=k_0}^k s_l}q_{k_0} + d_{k_0} e^{-\sum_{l=m(k)}^k s_l}  \sum_{l=k_0}^{m(k)-1} \left(1+\frac{1}{s_l}\right) b_l
				\nonumber\\&+ d_{m(k)} b_{m(k)}  \frac{s_k+1}{s_k^2}.
				\end{align}}
			\end{Lemma}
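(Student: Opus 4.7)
The plan is to unroll the recursion and then split the resulting sum at the threshold $m(k)$ chosen in the statement. Define the discrete ``transition'' product $p(k,l)=\prod_{j=l}^{k}(1-s_j)$ for $l\leq k$ and $p(k,k+1)=1$. Iterating the one-step inequality~\eqref{eq:techlemma2_condition} from $k_0$ up to $k$, I would obtain
{\small\begin{align*}
q_{k+1}\leq p(k,k_0)\,q_{k_0} + \sum_{l=k_0}^{k} p(k,l+1)\Bigl(1+\tfrac{1}{s_l}\Bigr)\,b_l\,d_l.
\end{align*}}
Since $s_j\in(0,1)$, the elementary estimate $1-x\leq e^{-x}$ gives $p(k,l)\leq e^{-\sum_{j=l}^{k} s_j}$. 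Applied to $l=k_0$, this produces the first term $e^{-\sum_{l=k_0}^{k} s_l}q_{k_0}$ of the desired bound.

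Next, I would split the sum at $m(k)$, writing it as $\sum_{l=k_0}^{m(k)-1}(\cdot) + \sum_{l=m(k)}^{k}(\cdot)$. For the ``early'' sum, the index range $l\leq m(k)-1$ ensures $p(k,l+1)\leq e^{-\sum_{j=m(k)}^{k} s_j}$, and the monotone decrease of $d_l$ gives $d_l\leq d_{k_0}$; pulling these factors out yields the middle term $d_{k_0}\,e^{-\sum_{l=m(k)}^{k} s_l}\sum_{l=k_0}^{m(k)-1}\bigl(1+\tfrac{1}{s_l}\bigr)b_l$.

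For the ``late'' sum, the monotone decrease of $b_l$ and $d_l$ lets me factor out $b_{m(k)}d_{m(k)}$, reducing the problem to bounding $\sum_{l=m(k)}^{k} p(k,l+1)\bigl(1+\tfrac{1}{s_l}\bigr)$. The key identity is the telescoping relation $s_l\,p(k,l+1)=p(k,l+1)-p(k,l)$, which yields $\sum_{l=m(k)}^{k} s_l\,p(k,l+1)=1-p(k,m(k))\leq 1$. Since $s_l$ is nonincreasing, $1/s_l\leq 1/s_k$ on the summation range, so
{\small\begin{align*}
\sum_{l=m(k)}^{k} p(k,l+1) \leq \tfrac{1}{s_k}\sum_{l=m(k)}^{k} s_l\,p(k,l+1)\leq \tfrac{1}{s_k},\qquad \sum_{l=m(k)}^{k}\tfrac{p(k,l+1)}{s_l}\leq \tfrac{1}{s_k^{2}},
\end{align*}}
and combining these two estimates produces exactly the factor $(s_k+1)/s_k^{2}$.

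The step I expect to be the main obstacle is not any single calculation, but the careful bookkeeping: one must verify that with the chosen split, the ``early'' terms can be handled by pulling out the common exponential $e^{-\sum_{j=m(k)}^{k}s_j}$ (which rests on the inequality $\sum_{j=l+1}^{k}s_j\geq\sum_{j=m(k)}^{k}s_j$ for $l\leq m(k)-1$) while the ``late'' terms are reduced to a telescoping sum that does not require any assumption on $\sum s_j$ except the membership $s_j\in(0,1)$. Together these three bounds give the claimed three-term inequality~\eqref{eq:techlemma2_result}.
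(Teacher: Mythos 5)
Your proposal is correct and follows essentially the same route as the paper's proof: define the product $p(k,l)$, unroll the recursion, split the sum at $m(k)$, use monotonicity of $b_l,d_l$ and the bound $1-x\leq e^{-x}$ for the early part, and use the telescoping identity $s_l\,p(k,l+1)=p(k,l+1)-p(k,l)$ together with $1/s_l\leq 1/s_k$ for the tail. The only (cosmetic) difference is that you bound $\sum p(k,l+1)$ and $\sum p(k,l+1)/s_l$ separately and add them, whereas the paper bounds the combined factor $\bigl(1+\tfrac{1}{s_l}\bigr)p(k,l+1)$ at once; both yield the same $(s_k+1)/s_k^2$.
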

			
			\begin{IEEEproof}
				Similarly as before, define $p(k,l)=(1-s_k)\cdots(1-s_l)$ for $k_0\leq l\leq k$, and let also $p(k,k+1)=1$. Recall that $p(k,l+1)s_l$ can be expressed as $p(k,l+1) s_l=p (k,l+1)-p(k,l)$. Then, we have:
				{\small\begin{align}\label{eq-tech-lemma2_proof}
				q_{k+1} &\leq p(k,k_0) q_{k_0} + \sum_{l=k_0}^k p(k,l)\left(1+\frac{1}{s_l} b_l d_l\right)\\
				& \leq p(k,k_0) q_{k_0} + d_{k_0} p(k,m(k)) \sum_{l=k_0}^{m(k)} \left(1+\frac{1}{s_l}\right) b_l \nonumber\\&+ b_{m(k)} d_{m(k)}\frac{s_k+1}{s_k^2} \sum_{m(k)}^k\left(p(k,l+1)-p(k,l)\right)\nonumber,
				\end{align}}
				where we break the sum in~\eqref{eq-tech-lemma2_proof} at $l=m(k)$, and use the fact that $p\left(k, m(k)-1\right)\geq p(k,l)$ for every $l\leq m(k)-1$, together with the fact that $1/s_l \leq 1/s_k$, for every $l\leq k$. Finally, noting that, for every $l\leq k$, $p(k,l) \leq e^{-\sum_{m=1}^k s_l}$, and also recalling that $\sum_{m(k)}^k\left(p(k,l+1)-p(k,l)\right) \leq 1$, proves the claim of the lemma.
			\end{IEEEproof}
			Applying the preceding lemma to $q_k=\mathbb{E}\left[\left\|\widetilde{\mathbf{x}}(k)\right\|^{2}\right]$, $d_k=\Delta_k$, $b_k=\frac{\alpha_k^2}{c_k^2}$, and $s_k=\frac{\beta_{k}}{2}\lambda_{2}\left(\overline{\mathbf{R}}\right)$ we have,
	{\small\begin{align}
	\label{eq:dis6.7}
	&\mathbb{E}\left[\left\|\widetilde{\mathbf{x}}(k+1)\right\|^{2}\right]\nonumber\\
	&\le \underbrace{\exp\left(-\sum_{l=0}^{k}s(l)\right)\mathbb{E}\left[\left\|\widetilde{\mathbf{x}}(0)\right\|^{2}\right]}_{\text{$t_1$}}\nonumber\\
	&+\underbrace{\Delta_{0}\exp\left(-\sum_{m=\lfloor\frac{k-1}{2}\rfloor}^{k}s(m)\right)\sum_{l=0}^{\lfloor\frac{k-1}{2}\rfloor-1}\left(\frac{2\alpha_l^2}{\lambda_{2}\left(\overline{\mathbf{R}}\right)c_{l}^{2}\beta_{l}}+\frac{\alpha_l^2}{c_{l}^{2}}\right)}_{\text{$t_2$}}\nonumber\\
	&+\underbrace{\frac{4\Delta_{\lfloor\frac{k-1}{2}\rfloor}\alpha_{0}^{2}}{\lambda_{2}^{2}\left(\overline{\mathbf{R}}\right)\beta_0^{2}c_0^2(k+1)^{2-2\tau-2\delta}}}_{\text{$t_3$}}+\underbrace{\frac{2\Delta_{\lfloor\frac{k-1}{2}\rfloor}\alpha_{0}^{2}}{\lambda_{2}\left(\overline{\mathbf{R}}\right)\beta_0c_0^2(k+1)^{2-\tau-2\delta}}}_{\text{$t_4$}}.
	\end{align}}
	In the above proof, the splitting in the interval $[0,k]$ was done at $\lfloor\frac{k-1}{2}\rfloor$ for ease of book keeping. The division can be done at an arbitrary point.
	It is to be noted that the sequence $\{s(k)\}$ is not summable and hence terms $t_1$ and $t_2$ decay faster than $(k+1)^{2-2\tau-2\delta}$. Also, note that term $t_4$ decays faster than $t_3$. Furthermore, $t_3$ can be written as
	{\small\begin{align*}
		&\frac{4\Delta_{\lfloor\frac{k-1}{2}\rfloor}\alpha_{0}^{2}}{\lambda_{2}^{2}\left(\overline{\mathbf{R}}\right)\beta_0^{2}c_0^2(k+1)^{2-2\tau-2\delta}} = \underbrace{\frac{4\Delta_{1,\infty}\alpha_{0}^{2}}{\lambda_{2}^{2}\left(\overline{\mathbf{R}}\right)\beta_0^{2}c_0^2(k+1)^{2-2\tau-2\delta}}}_{\text{$t_{31}$}}\nonumber\\&+\underbrace{\frac{4c_{\lfloor\frac{k-1}{2}\rfloor}^{2}\Delta_{2,\infty}\alpha_{0}^{2}}{\lambda_{2}^{2}\left(\overline{\mathbf{R}}\right)\beta_0^{2}c_0^2(k+1)^{2-2\tau-2\delta}}}_{\text{$t_{32}$}},
		\end{align*}}
	from which we have that $t_{32}$ decays faster than $t_{31}$.
	For notational ease, henceforth we refer to $t_1+t_2+t_{32}+t_4=Q_{k}$, while keeping in mind that $Q_{k}$ decays faster than $(k+1)^{2-2\tau-2\delta}$.
	Hence, we have the disagreement given by,
	{\small\begin{align*}
	\mathbb{E}\left[\left\|\widetilde{\mathbf{x}}(k+1)\right\|^{2}\right]  =O\left(\frac{1}{k^{2-2\delta-2\tau}}\right).
	\end{align*}}
\end{proof}
We now proceed to the proof of Theorem \ref{theorem-1}.
\noindent Denote $\overline{\mathbf{x}}(k)=\frac{1}{N}\sum_{n=1}\mathbf{x}_{i}(k)$.
Then, we have,
{\small\begin{align}
	\label{eq:opt1}
	&\overline{\mathbf{x}}(k+1) = \overline{\mathbf{x}}(k)\nonumber\\&-\frac{\alpha_{k}}{c_k}\left[\frac{c_k}{N}\sum_{i=1}^{N}\nabla f_{i}\left(\mathbf{x}_{i}(k)\right)+\underbrace{\frac{c_k^{2}}{N}\sum_{i=1}^{N}\mathbf{b}_{i}\left(\mathbf{x}_{i}(k)\right)}_{\text{$\overline{\mathbf{b}}\left(\mathbf{x}(k)\right)$}}\right.\nonumber\\&\left.+\underbrace{\frac{c_{k}}{N}\sum_{i=1}^{N}\mathbf{h}_{i}(\mathbf{x}_{i}(k))}_{\text{$\overline{\mathbf{h}}(\mathbf{x}(k))$}}\right]\nonumber\\
	&\Rightarrow \overline{\mathbf{x}}(k+1) = \overline{\mathbf{x}}(k)-\frac{\alpha_{k}}{c_{k}}\left(\overline{\mathbf{h}}(\mathbf{x}(k))+\overline{\mathbf{b}}\left(\mathbf{x}(k)\right)\right)\nonumber\\&-\frac{\alpha_k}{Nc_{k}}\left[c_k\sum_{i=1}^{N}\nabla f_{i}\left(\mathbf{x}_{i}(k)\right)-\nabla f_{i}\left(\overline{\mathbf{x}}(k)\right)+\nabla f_{i}\left(\overline{\mathbf{x}}(k)\right)\right].
	\end{align}}
Recall that $f(\cdot)=\sum_{i=1}^{N}f_{i}(\cdot)$.
Then, we have,
{\small\begin{align}
\label{eq:0opt2}
&\overline{\mathbf{x}}(k+1) = \overline{\mathbf{x}}(k)-\frac{\alpha_{k}}{c_{k}}\left(\overline{\mathbf{h}}(\mathbf{x}(k))+\overline{\mathbf{b}}\left(\mathbf{x}(k)\right)\right)\nonumber\\&-\frac{\alpha_{k}}{N}\nabla f\left(\overline{\mathbf{x}}(k)\right)-\frac{\alpha_k}{N}\left[\sum_{i=1}^{N}\nabla f_{i}\left(\mathbf{x}_{i}(k)\right)-\nabla f_{i}\left(\overline{\mathbf{x}}(k)\right)\right]\nonumber\\
&\Rightarrow \overline{\mathbf{x}}(k+1) = \overline{\mathbf{x}}(k)-\frac{\alpha_k}{Nc_{k}}\left[c_{k}\nabla f\left(\overline{\mathbf{x}}(k)\right)+\mathbf{e}(k)\right],
\end{align}}
where
{\small\begin{align}
\label{eq:0opt3}
&\mathbf{e}(k) = N\overline{\mathbf{h}}(\mathbf{x}(k))\nonumber\\&+\underbrace{N\overline{\mathbf{b}}\left(\mathbf{x}(k)\right)+c_{k}\sum_{i=1}^{N}\left(\nabla f_{i}\left(\mathbf{x}_{i}(k)\right)-\nabla f_{i}\left(\overline{\mathbf{x}}(k)\right)\right)}_{\text{$\boldsymbol{\epsilon}(k)$}}.
\end{align}}
Note that, $c_{k}\left\|\nabla f_{i}\left(\mathbf{x}_{i}(k)\right)-\nabla f_{i}\left(\overline{\mathbf{x}}(k)\right)\right\| \leq c_{k}L\left\|\mathbf{x}_{i}(k)-\overline{\mathbf{x}}(k)\right\| = c_{k}L\left\|\widetilde{\mathbf{x}}_{i}(k)\right\|$. We also have that, $\left\|\overline{\mathbf{b}}\left(\mathbf{x}(k)\right)\right\| \le \frac{M}{4}s_{1}(P)c_{k}^{3}$. Thus, we can conclude that, $\forall k\geq k_3$
{\small\begin{align}
\label{eq:opt4}
&\boldsymbol{\epsilon}(k) = c_{k}\sum_{i=1}^{N}\left(\nabla f_{i}\left(\mathbf{x}_{i}(k)\right)-\nabla f_{i}\left(\overline{\mathbf{x}}(k)\right)\right)+N\overline{\mathbf{b}}\left(\mathbf{x}(k)\right)\nonumber\\
&\Rightarrow\left\|\boldsymbol{\epsilon}(k)\right\|^{2} \leq 2NL^{2}c_{k}^{2}\left\|\widetilde{\mathbf{x}}(k)\right\|^{2}+\frac{N}{8}M^{2}d^2(P)c_{k}^{6}\nonumber\\
&\Rightarrow\mathbb{E}\left[\left\|\boldsymbol{\epsilon}(k)\right\|^{2}\right] \leq \frac{8NL^{2}\Delta_{1,\infty}\alpha_{0}^{2}}{\lambda_{2}^{2}\left(\overline{\mathbf{R}}\right)\beta_0^{2}(k+1)^{2-2\tau}}+\frac{NM^{2}d^2(P)c_{0}^{6}}{8(k+1)^{6\delta}}\nonumber\\&+\frac{2NL^{2}Q_{k}c_{0}^{2}}{(k+1)^{2\delta}}.
\end{align}}
With the above development in place, we rewrite \eqref{eq:0opt2} as follows:
{\small\begin{align}
\label{eq:0opt5}
&\overline{\mathbf{x}}(k+1) = \overline{\mathbf{x}}(k)-\frac{\alpha_{k}}{N}\nabla f\left(\overline{\mathbf{x}}(k)\right)-\frac{\alpha_k}{Nc_{k}}\boldsymbol{\epsilon}(k)-\frac{\alpha_{k}}{c_{k}}\overline{\mathbf{h}}(\mathbf{x}(k))\nonumber\\
&\Rightarrow \overline{\mathbf{x}}(k+1)-\mathbf{x}^{\ast} = \overline{\mathbf{x}}(k)-\mathbf{x}^{\ast}-\frac{\alpha_{k}}{N}\left[\nabla f\left(\overline{\mathbf{x}}(k)\right)-\underbrace{\nabla f\left(\mathbf{x}^{\ast}\right)}_{\text{$=0$}}\right]\nonumber\\&-\frac{\alpha_k}{Nc_{k}}\boldsymbol{\epsilon}(k)-\frac{\alpha_{k}}{c_{k}}\overline{\mathbf{h}}(\mathbf{x}(k)).
\end{align}}
By Leibnitz rule, we have,
{\small\begin{align}
\label{eq:0opt6}
&\nabla f\left(\overline{\mathbf{x}}(k)\right)-\nabla f\left(\mathbf{x}^{\ast}\right) \nonumber\\&= \underbrace{\left[\int_{s=0}^{1}\nabla^{2}f\left(\mathbf{x}^{\ast}+s\left(\overline{\mathbf{x}}(k)-\mathbf{x}^{\ast}\right)\right)ds\right]}_{\text{$\overline{\mathbf{H}}_{k}$}}\left(\overline{\mathbf{x}}(k)-\mathbf{x}^{\ast}\right),
\end{align}}
where it is to be noted that $NL\succcurlyeq\overline{\mathbf{H}}_{k}\succcurlyeq N\mu$.
Using \eqref{eq:0opt6} in  \eqref{eq:0opt5}, we have,
{\small\begin{align}
\label{eq:0opt7}
&\left(\overline{\mathbf{x}}(k+1)-\mathbf{x}^{\ast}\right) = \left[\mathbf{I}-\frac{\alpha_k}{N}\overline{\mathbf{H}}_{k}\right]\left(\overline{\mathbf{x}}(k)-\mathbf{x}^{\ast}\right)\nonumber\\&-\frac{\alpha_k}{Nc_{k}}\boldsymbol{\epsilon}(k)-\frac{\alpha_{k}}{c_{k}}\overline{\mathbf{h}}(\mathbf{x}(k)).
\end{align}}
Denote by $\mathbf{m}(k)=\left[\mathbf{I}-\frac{\alpha_k}{N}\overline{\mathbf{H}}_{k}\right]\left(\overline{\mathbf{x}}(k)-\mathbf{x}^{\ast}\right)-\frac{\alpha_k}{Nc_{k}}\boldsymbol{\epsilon}(k)$ and note that $\mathbf{m}(k)$ is conditionally independent from $\overline{\mathbf{h}}(\mathbf{x}(k))$ given the history $\mathcal{F}_{k}$. Then \eqref{eq:0opt7} can be rewritten as:
{\small\begin{align}
\label{eq:0opt8}
&\left(\overline{\mathbf{x}}(k+1)-\mathbf{x}^{\ast}\right)=\mathbf{m}(k)-\frac{\alpha_{k}}{c_{k}}\overline{\mathbf{h}}(\mathbf{x}(k))\nonumber\\
&\Rightarrow \left\|\overline{\mathbf{x}}(k+1)-\mathbf{x}^{\ast}\right\|^{2} \le \left\|\mathbf{m}(k)\right\|^{2}-2\frac{\alpha_{k}}{c_{k}}\mathbf{m}(k)^{\top}\overline{\mathbf{h}}(\mathbf{x}(k))\nonumber\\&+\frac{\alpha_{k}^{2}}{c_{k}^{2}}\left\|\overline{\mathbf{h}}(\mathbf{x}(k))\right\|^{2}.
\end{align}}
Using the properties of conditional expectation and noting that $\mathbb{E}\left[\mathbf{h}(\mathbf{x}(k))|\mathcal{F}_{k}\right]=\mathbf{0}$, we have,
{\small\begin{align}
	\label{eq:0opt9}
	&\mathbb{E}\left[\left\|\overline{\mathbf{x}}(k+1)-\mathbf{x}^{\ast}\right\|^{2}|\mathcal{F}_{k}\right] \le \left\|\mathbf{m}(k)\right\|^{2}+\frac{\alpha_{k}^{2}}{c_k^2}\mathbb{E}\left[\left\|\overline{\mathbf{h}}(\mathbf{x}(k))\right\|^{2}|\mathcal{F}_{k}\right]\nonumber\\
	&\Rightarrow \mathbb{E}\left[\left\|\overline{\mathbf{x}}(k+1)-\mathbf{x}^{\ast}\right\|^{2}\right] \le \mathbb{E}\left[\left\|\mathbf{m}(k)\right\|^{2}\right]+2N\alpha_{k}^{2}c_{k}^{2}s_{2}(P)
	\nonumber\\&+\frac{2\alpha_{k}^{2}\left(dc_{v}q_{\infty}(N,d,\alpha_0,c_0)+dN\sigma_1^{2}\right)}{c_k^2}\nonumber\\&+4\alpha_{k}^{2}Ns_{1}(P)L^{2}q_{\infty}(N,d,\alpha_0,c_0)+4\alpha_{k}^{2}Ns_{1}(P)\left\|\nabla F\left(\mathbf{x}^{o}\right)\right\|^{2}.
	\end{align}}
\noindent For notational simplicity, we denote $\alpha_{k}^{2}\sigma_{h}^{2}=2N\alpha_{k}^{2}c_{k}^{2}s_{2}(P)+4\alpha_{k}^{2}Ns_{1}(P)L^{2}q_{\infty}(N,d,\alpha_0,c_0)+4\alpha_{k}^{2}Ns_{1}(P)\left\|\nabla F\left(\mathbf{x}^{o}\right)\right\|^{2}$.
Using \eqref{eq:cool_ineq_1}, we have for $\mathbf{m}(k)$,
{\small\begin{align}
\label{eq:opt10}
&\left\|\mathbf{m}(k)\right\|^{2} \le \left(1+\theta_{k}\right)\left\|\mathbf{I}-\frac{\alpha_k}{N}\overline{\mathbf{H}}_{k}\right\|^{2}\left\|\overline{\mathbf{x}}(k)-\mathbf{x}^{\ast}\right\|^{2}\nonumber\\&+\left(1+\frac{1}{\theta_k}\right)\frac{\alpha_k^2}{N^2c_{k}^{2}}\left\|\boldsymbol{\epsilon}(k)\right\|^{2}\nonumber\\
&\le \left(1+\theta_{k}\right)\left(1-\frac{\mu\alpha_{0}}{k+1}\right)^{2}\left\|\overline{\mathbf{x}}(k)-\mathbf{x}^{\ast}\right\|^{2}\nonumber\\&+\left(1+\frac{1}{\theta_k}\right)\frac{\alpha_k^2}{N^2c_{k}^{2}}\left\|\boldsymbol{\epsilon}(k)\right\|^{2}.
\end{align}}
On choosing $\theta_{k}=\frac{\mu\alpha_0}{k+1}$, where $\alpha_0 > \frac{1}{\mu}$, we have, 
{\small\begin{align}
	\label{eq:0opt11}
	&\mathbb{E}\left[\left\|\mathbf{m}(k)\right\|^{2}\right] \leq \left(1-\frac{\mu\alpha_0}{k+1}\right)\mathbb{E}\left[\left\|\overline{\mathbf{x}}(k)-\mathbf{x}^{\ast}\right\|^{2}\right]\nonumber\\&+
	\frac{16L^{2}\Delta_{1,\infty}N\alpha_{0}^{3}}{\mu\lambda_{2}^{2}\left(\overline{\mathbf{R}}\right)c_0^{2}\beta_0^{2}(k+1)^{3-2\tau-2\delta}}+\frac{4M^{2}Nd^2(P)c_{0}^{4}\alpha_0}{\mu(k+1)^{1+4\delta}}+\frac{4L^{2}NQ_{k}}{\mu(k+1)}\nonumber\\
	&\Rightarrow \mathbb{E}\left[\left\|\overline{\mathbf{x}}(k+1)-\mathbf{x}^{\ast}\right\|^{2}\right] \leq \left(1-\frac{\mu\alpha_0}{k+1}\right)\mathbb{E}\left[\left\|\overline{\mathbf{x}}(k)-\mathbf{x}^{\ast}\right\|^{2}\right]\nonumber\\&+\frac{16NL^{2}\Delta_{1,\infty}\alpha_{0}^{3}}{\mu\lambda_{2}^{2}\left(\overline{\mathbf{R}}\right)c_0^{2}\beta_0^{2}(k+1)^{3-2\tau-2\delta}}+\frac{4NM^{2}d^2(P)c_{0}^{4}\alpha_0}{\mu(k+1)^{1+4\delta}}+\frac{4NL^{2}Q_{k}}{\mu(k+1)}\nonumber\\
	&+\frac{2\alpha_{k}^{2}\left(dc_{v}q_{\infty}(N,d,\alpha_0,c_0)+dN\sigma_1^{2}\right)}{c_k^2}+\alpha_{k}^{2}\sigma_{h}^{2}\nonumber\\
	&\Rightarrow\mathbb{E}\left[\left\|\overline{\mathbf{x}}(k+1)-\mathbf{x}^{\ast}\right\|^{2}\right] \leq \left(1-\frac{\mu\alpha_0}{k+1}\right)\mathbb{E}\left[\left\|\overline{\mathbf{x}}(k)-\mathbf{x}^{\ast}\right\|^{2}\right]\nonumber\\&+\frac{16NL^{2}\Delta_{1,\infty}\alpha_{0}^{3}}{\mu\lambda_{2}^{2}\left(\overline{\mathbf{R}}\right)c_0^{2}\beta_0^{2}(k+1)^{3-2\tau-2\delta}}\nonumber\\&+\frac{4M^{2}Nd^2(P)c_{0}^{4}\alpha_0}{\mu(k+1)^{1+4\delta}}+\frac{2\alpha_{0}^{2}\left(dc_{v}q_{\infty}(N,d,\alpha_0,c_0)+dN\sigma_1^{2}\right)}{c_0^2(k+1)^{2-2\delta}}+P_{k},
	\end{align}}
where $P_{k}=\frac{4NL^{2}Q_{k}}{\mu(k+1)}+\frac{\alpha_0^{2}\sigma_{h}^{2}}{(k+1)^{2}}$ decays faster as compared to the other terms.
Proceeding as in \eqref{eq:dis6.7}, we have
{\small\begin{align}
	\label{eq:opt11.5}
	&\mathbb{E}\left[\left\|\overline{\mathbf{x}}(k+1)-\mathbf{x}^{\ast}\right\|^{2}\right]\nonumber\\
	& \leq \underbrace{\exp\left(-\mu\sum_{l=0}^{k}\alpha_{l}\right)\mathbb{E}\left[\left\|\overline{\mathbf{x}}(k)-\mathbf{x}^{\ast}\right\|^{2}\right]}_{\text{$t_6$}}\nonumber\\
	&+\underbrace{\exp\left(-\mu\sum_{m=\lfloor\frac{k-1}{2}\rfloor}^{k}\alpha_{m}\right)\sum_{l=0}^{\lfloor\frac{k-1}{2}\rfloor-1}\frac{16NL^{2}\Delta_{1,\infty}\alpha_{0}^{3}}{\mu\lambda_{2}^{2}\left(\overline{\mathbf{R}}\right)c_0^{2}\beta_0^{2}(k+1)^{3-2\tau-2\delta}}}_{\text{$t_7$}}\nonumber\\&+\underbrace{\exp\left(-\frac{\mu}{N}\sum_{m=\lfloor\frac{k-1}{2}\rfloor}^{k}\alpha_{m}\right)\sum_{l=k_5}^{\lfloor\frac{k-1}{2}\rfloor-1}\frac{4M^{2}Nd^2(P)c_{0}^{4}\alpha_0}{\mu(k+1)^{1+4\delta}}}_{\text{$t_8$}}\nonumber\\
	&+\underbrace{\exp\left(-\mu\sum_{m=\lfloor\frac{k-1}{2}\rfloor}^{k}\alpha_{m}\right)\sum_{l=0}^{\lfloor\frac{k-1}{2}-1}P_{l}+\frac{2\alpha_{0}^{2}dc_{v}q_{\infty}(N,d,\alpha_0,c_0)}{c_0^2(l+1)^{2-2\delta}}}_{\text{$t_{10}$}}\nonumber\\
	&+\underbrace{\exp\left(-\mu\sum_{m=\lfloor\frac{k-1}{2}\rfloor}^{k}\alpha_{m}\right)\sum_{l=0}^{\lfloor\frac{k-1}{2}-1}\frac{2\alpha_{0}^{2}dN\sigma_1^{2}}{c_0^2(l+1)^{2-2\delta}}}_{\text{$t_{11}$}}\nonumber\\
	&+\underbrace{\frac{32NL^{2}\Delta_{1,\infty}\alpha_{0}^{2}}{\mu^2\lambda_{2}^{2}\left(\overline{\mathbf{R}}\right)c_0^{2}\beta_0^{2}(k+1)^{2-2\tau-2\delta}}}_{\text{$t_{12}$}}\nonumber\\
	&+\underbrace{\frac{8NM^{2}d^2(P)c_{0}^{4}}{\mu^{2}(k+1)^{4\delta}}}_{\text{$t_{13}$}}+\underbrace{\frac{N(k+1)P_{k}}{\mu\alpha_0}}_{\text{$t_{14}$}}\nonumber\\
	&+\underbrace{\frac{4N\alpha_{0}\left(dc_{v}q_{\infty}(N,d,\alpha_0,c_0)+dN\sigma_1^{2}\right)}{\mu c_0^2(k+1)^{1-2\delta}}}_{\text{$t_{15}$}}.
	\end{align}}
It is to be noted that the term $t_6$ decays as $1/k$. The terms $t_7$, $t_8$, $t_{10}$, $t_{11}$ and $t_{14}$ decay faster than its counterparts in the terms $t_{12}$, $t_{13}$ and $t_{15}$ respectively. We note that $Q_{l}$ also decays faster.
Hence, the rate of decay of $\mathbb{E}\left[\left\|\overline{\mathbf{x}}(k+1)-\mathbf{x}^{\ast}\right\|^{2}\right]$ is determined by the terms $t_{12}$, $t_{13}$ and $t_{15}$. Thus, we have that, $\mathbb{E}\left[\left\|\overline{\mathbf{x}}(k+1)-\mathbf{x}^{\ast}\right\|^{2}\right]=O\left(k^{-\delta_{1}}\right)$,
where $\delta_{1}=\min\left\{1-2\delta,2-2\tau-2\delta,4\delta\right\}$. For notational ease, we refer to $t_6+t_7+t_8+t_{10}+t_{11}+t_{14}= M_{k}$ from now on.
Finally, we note that,
{\small\begin{align}
\label{eq:opt12}
&\left\|\mathbf{x}_{i}(k)-\mathbf{x}^{\ast}\right\| \le \left\|\overline{\mathbf{x}}(k)-\mathbf{x}^{\ast}\right\|+\left\|\underbrace{\mathbf{x}_{i}(k)-\overline{\mathbf{x}}(k)}_{\text{$\widetilde{\mathbf{x}}_{i}(k)$}}\right\|\nonumber\\
&\Rightarrow\left\|\mathbf{x}_{i}(k)-\mathbf{x}^{\ast}\right\|^{2} \leq 2\left\|\widetilde{\mathbf{x}}_{i}(k)\right\|^{2}+2\left\|\overline{\mathbf{x}}(k)-\mathbf{x}^{\ast}\right\|^{2}\nonumber\\
&\Rightarrow\mathbb{E}\left[\left\|\mathbf{x}_{i}(k)-\mathbf{x}^{\ast}\right\|^{2}\right] \le 2M_{k}+\frac{64NL^{2}\Delta_{1,\infty}\alpha_{0}^{2}}{\mu^2\lambda_{2}^{2}\left(\overline{\mathbf{R}}\right)c_0^{2}\beta_0^{2}(k+1)^{2-2\tau-2\delta}}\nonumber\\&+\frac{16NM^{2}d^2(P)c_{0}^{4}}{\mu^{2}(k+1)^{4\delta}}+2Q_{k}+
\frac{8\Delta_{1,\infty}\alpha_{0}^{2}}{\lambda_{2}^{2}\left(\overline{\mathbf{R}}\right)\beta_0^{2}c_0^2(k+1)^{2-2\tau-2\delta}}\nonumber\\&+
\frac{4N\alpha_{0}\left(dc_{v}q_{\infty}(N,d,\alpha_0,c_0)+dN\sigma_1^{2}\right)}{\mu c_0^2(k+1)^{1-2\delta}}\nonumber\\
&\Rightarrow\mathbb{E}\left[\left\|\mathbf{x}_{i}(k)-\mathbf{x}^{\ast}\right\|^{2}\right] = O\left(\frac{1}{k^{\delta_{1}}}\right),~~\forall i,
\end{align}}
where $\delta_{1}=\min\left\{1-2\delta,2-2\tau-2\delta,4\delta\right\}$. By, optimizing over $\tau$ and $\delta$, we obtain that for $\tau=1/2$ and $\delta=1/6$,
{\small\begin{align*}
\mathbb{E}\left[\left\|\mathbf{x}_{i}(k)-\mathbf{x}^{\ast}\right\|^{2}\right] = O\left(\frac{1}{k^{\frac{2}{3}}}\right),~~\forall i.
\end{align*}}
The communication cost is given by,
{\small\begin{align*}
\mathbb{E}\left[\sum_{t=1}^{k}\zeta_{t}\right]=O\left(k^{\frac{3}{4}+\frac{\epsilon}{2}}\right).
\end{align*}}
Thus, we achieve the communication rate to be,
{\small\begin{align}
\label{eq:0comm_rate}
\mathbb{E}\left[\left\|\mathbf{x}_{i}(k)-{\mathbf{x}^{\star}}\right\|^{2}\right]=O\left(\frac{1}{\mathcal{C}_{k}^{8/9-\zeta}}\right),
\end{align}}
where $\zeta$ can be arbitrarily small.

\section{Proof of the main result: First order optimization}
\label{sec:proof_main_res_1}
\begin{Lemma}
	\label{Lemma-MSS-BDD}
	Consider algorithm \eqref{eqn-alg-node-i}, and
	let the hypotheses of Theorem \ref{theorem-2} hold.
	Then, we have that for all $k=0,1,...,$ there holds:
	{\small\begin{align*}
	&\mathbb{E}\left[\left\|\mathbf{x}(k)-\mathbf{x}^{o}\right\|^2\right]\le q_{k_0}(N,\alpha_0)\nonumber\\&+\frac{\pi^{2}}{6}\alpha_0^2\left(2c_{u}N\left\|\mathbf{x}^{o}\right\|^{2}+N\sigma_u^{2}\right)+4\frac{\|\nabla F(\mathbf{x}^{o})\|^2}{\mu^2}\doteq q_{\infty}(N,\alpha_0),
	\end{align*}}
	where $\mathbb{E}\left[\left\|\mathbf{x}(k_2)-\mathbf{x}^{o}\right\|^{2}\right] \le q_{k_2}(N,\alpha_0)$, $k_2=\max\{k_0,k_1\}$, $k_0 = \inf\{k|\mu^{2}\alpha_{k}^2 < 1\}$ and $k_{1}=\inf\left\{k | \frac{\mu}{2} > 2c_{u}\alpha_{k}\right\}$.
\end{Lemma}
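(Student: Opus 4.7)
The plan is to mimic the strategy already executed for the zeroth order counterpart (Lemma \ref{Lemma0-MSS-BDD}), but with two simplifications afforded by having access to a true stochastic gradient oracle: the bias term $\mathbf{b}(\mathbf{x}(k))$ disappears entirely, and the perturbation-size $c_k$ never enters. I would set $\boldsymbol{\zeta}(k) = \mathbf{x}(k) - \mathbf{x}^{o}$ with $\mathbf{x}^{o} = \mathbf{1}_N \otimes \mathbf{x}^{\star}$, subtract $\mathbf{x}^{o}$ from both sides of the compact recursion \eqref{eq:update_rule}, and use the Leibnitz integral representation (exactly as in \eqref{eq:0mvt}) to write $\nabla F(\mathbf{x}(k)) - \nabla F(\mathbf{x}^{o}) = \mathbf{H}_k \boldsymbol{\zeta}(k)$, where $\mu \mathbf{I} \preceq \mathbf{H}_k \preceq L \mathbf{I}$ by Assumption \ref{as:1}.

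Defining $\boldsymbol{\xi}(k) = (\mathbf{W}_k - \alpha_k \mathbf{H}_k)\boldsymbol{\zeta}(k) - \alpha_k \nabla F(\mathbf{x}^{o})$, the recursion becomes $\boldsymbol{\zeta}(k+1) = \boldsymbol{\xi}(k) - \alpha_k \mathbf{u}(k)$. Conditioning on $\mathcal{S}_k$ and invoking $\mathbb{E}[\mathbf{u}(k)\mid \mathcal{S}_k] = 0$ (Assumption \ref{assumption-gradient-noise}) kills the cross term, giving $\mathbb{E}[\|\boldsymbol{\zeta}(k+1)\|^2 \mid \mathcal{S}_k] \leq \mathbb{E}[\|\boldsymbol{\xi}(k)\|^2 \mid \mathcal{S}_k] + \alpha_k^2 \mathbb{E}[\|\mathbf{u}(k)\|^2 \mid \mathcal{S}_k]$. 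The operator bound $\|\mathbf{W}_k - \alpha_k \mathbf{H}_k\| \leq 1 - \mu \alpha_k$ (valid once $\mu \alpha_k < 1$) together with the Young inequality $(a+b)^2 \leq (1+\theta)a^2 + (1+1/\theta)b^2$ applied with $\theta = \mu \alpha_k$ yields $\mathbb{E}[\|\boldsymbol{\xi}(k)\|^2] \leq (1-\mu\alpha_k)\mathbb{E}[\|\boldsymbol{\zeta}(k)\|^2] + 2\alpha_k \|\nabla F(\mathbf{x}^{o})\|^2/\mu$, exactly as in \eqref{eq:update_rule4}. For the noise term, I would split $\|\mathbf{x}_i(k)\|^2 \leq 2\|\mathbf{x}_i(k) - \mathbf{x}^{\star}\|^2 + 2\|\mathbf{x}^{\star}\|^2$ and use \eqref{eqn-ass-noise-2} to obtain $\mathbb{E}[\|\mathbf{u}(k)\|^2] \leq 2 c_u \mathbb{E}[\|\boldsymbol{\zeta}(k)\|^2] + 2 c_u N\|\mathbf{x}^{o}\|^2 + N\sigma_u^2$.

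Combining the two pieces produces a recursion of the form $\mathbb{E}[\|\boldsymbol{\zeta}(k+1)\|^2] \leq (1 - \mu\alpha_k + 2 c_u \alpha_k^2)\mathbb{E}[\|\boldsymbol{\zeta}(k)\|^2] + (2\alpha_k/\mu)\|\nabla F(\mathbf{x}^{o})\|^2 + \alpha_k^2(2 c_u N\|\mathbf{x}^{o}\|^2 + N\sigma_u^2)$. For $k \geq k_1 = \inf\{k : \mu/2 > 2 c_u \alpha_k\}$ the leading coefficient is dominated by $1 - \mu\alpha_k/2$, and for $k \geq k_0 = \inf\{k : \mu^2 \alpha_k^2 < 1\}$ the Young-inequality step is valid; hence for $k \geq k_2 = \max\{k_0, k_1\}$ the recursion fits the template of the auxiliary Lemma \ref{lemma:technical0}. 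Applying that lemma with $a_k = \mu\alpha_k/2$, $u = 4\|\nabla F(\mathbf{x}^{o})\|^2/\mu^2$, and $d_k = \alpha_k^2 (2 c_u N\|\mathbf{x}^{o}\|^2 + N\sigma_u^2)$, and using $\sum_{k=1}^{\infty} \alpha_k^2/\alpha_0^2 = \sum_{k=1}^{\infty} 1/k^2 = \pi^2/6$, delivers the claimed $\pi^2/6$ constant.

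The main obstacle is the self-referential nature of the gradient-noise variance (it scales with $\|\mathbf{x}_i(k)\|^2$): this term must be absorbed into the contraction provided by strong convexity, which is only possible once $\alpha_k$ has shrunk past the $c_u$-dependent threshold, i.e., for $k \geq k_1$. Before $k_2$ one cannot claim contraction and the iterates might transiently grow; this is handled by lumping $\mathbb{E}[\|\boldsymbol{\zeta}(k_2)\|^2]$ into the finite constant $q_{k_2}(N,\alpha_0)$, whose finiteness follows trivially from the deterministic initial condition, finitely many iterations, and the moment bound~\eqref{eqn-ass-noise-2} propagated forward by induction over $k = 0,\ldots,k_2-1$.
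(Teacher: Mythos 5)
Your proposal is correct and follows essentially the same route as the paper: the paper's own proof of this lemma is literally "proceeding as in the proof of Lemma~\ref{Lemma0-MSS-BDD}, with $c_k=1$ and $\mathbf{b}(\mathbf{x}(k))=0$," which is exactly the reduction you describe, including the Young inequality with $\theta=\mu\alpha_k$, the absorption of the state-dependent noise variance into the contraction for $k\geq k_1$, and the application of the auxiliary recursion lemma yielding the $\pi^2/6$ constant from $\sum_k 1/(k+1)^2$. No gaps.
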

\noindent\begin{IEEEproof}
\noindent Proceeding as in the proof of Lemma \ref{Lemma0-MSS-BDD}, with $c_{k}=1$ and $\mathbf{b}(\mathbf{x}(k))=0$, we have that,
	$\forall k \geq \max\left\{k_0,k_1\right\}$,
	{\small\begin{align}
		\label{eq:1combine-4}
		&\mathbb{E}\left[\|\boldsymbol{\zeta}(k+1)\|^2 \right] \le \prod_{l=k_0}^{k}\left(1-\frac{\mu\alpha_{l}}{2}\right)\mathbb{E}\left[\|\boldsymbol{\zeta}(k_0)\|^2\right]\nonumber\\
		&+\frac{\pi^{2}}{6}\alpha_0^2\left(2c_{u}N\left\|\mathbf{x}^{o}\right\|^{2}+N\sigma_{u}^{2}\right)\nonumber\\&+4\frac{\|\nabla F(\mathbf{x}^{o})\|^2}{\mu^2}\nonumber\\
		&\mathbb{E}\left[\|\boldsymbol{\zeta}(k+1)\|^2 \right] \le q_{k_2}(N,\alpha_0)+\frac{\pi^{2}}{6}\alpha_0^2\left(2c_{u}N\left\|\mathbf{x}^{o}\right\|^{2}+N\sigma_{u}^{2}\right)\nonumber\\&+4\frac{\|\nabla F(\mathbf{x}^{o})\|^2}{\mu^2}\nonumber\\&\doteq q_{\infty}(N,\alpha_0),
		\end{align}}
	where $k_0 = \inf\{k|\mu^{2}\alpha_{k}^2 < 1\}$ and
	{\small\begin{align*}
	k_{1}=\inf\left\{k | \frac{\mu}{2} > 2c_{u}\alpha_{k}\right\}.
	\end{align*}}
	and $k_2=\max\{k_0,k_1\}$. It is to be noted that $k_1$ is necessarily finite as $\alpha_{k}\to 0$  as $k\to\infty$. 
	\noindent Hence, we have that $\mathbb{E}\left[\left\|\mathbf{x}(k+1)-\mathbf{x}^{o}\right\|^{2}\right]$ is finite and bounded from above, where $\mathbb{E}\left[\left\|\mathbf{x}(k_2)-\mathbf{x}^{o}\right\|^{2}\right] \le q_{k_2}(N,\alpha_0)$. From the boundedness of $\mathbb{E}\left[\left\|\mathbf{x}(k)-\mathbf{x}^{o}\right\|^2\right]$, we have also established the boundedness of $\mathbb{E}\left[\left\|\nabla F(\mathbf{x}(k))\right\|^2\right]$ and $\mathbb{E}\left[\left\|\mathbf{x}(k)\right\|^2\right]$. 
	
	\noindent With the above development in place, we can bound the variance of the noise process $\{\mathbf{v}(k)\}$ as follows:
	{\small\begin{align}
	\label{eq:1noise_variance_condition}
	&\mathbb{E}\left[\left\|\mathbf{u}(k)\right\|^{2}|\mathcal{S}_{k}\right]\le 2c_{u}q_{\infty}(N,\alpha_0)\nonumber\\&+2N\underbrace{\left(\sigma_u^{2}+\left\|\mathbf{x}^{\ast}\right\|^{2}\right)}_{\text{$\sigma_{1}^{2}$}}.
	\end{align}}
\noindent The proof of Lemma \ref{Lemma-MSS-BDD} is now complete.
\end{IEEEproof}
\noindent Recall the (hypothetically available) global average of nodes' estimates
$\overline{\mathbf{x}}(k)=\frac{1}{N}\sum_{i=1}^N \mathbf{x}_i(k)$,
and denote by
$\widetilde{\mathbf{x}}_i(k)
=\mathbf{x}_i(k) - \overline{\mathbf{x}}(k)$ the quantity
that measures how far apart is node $i$'s
solution estimate from the global average.
Introduce also vector
$\widetilde{\mathbf{x}}(k)=(\,\widetilde{\mathbf{x}}_1(k),...,\widetilde{\mathbf{x}}_N(k)\,)^\top$,
and note that it can be represented as  $\widetilde{\mathbf{x}}(k)=\left(\mathbf{I}-\mathbf{J}\right)\mathbf{x}(k)$, where we recall $\mathbf{J}=\frac{1}{N}\mathbf{1}\mathbf{1}^{\top}$.
We have the following Lemma.
\begin{Lemma}
	\label{lemma-disag-bound}
	Let the hypotheses of Theorem \ref{theorem-2} hold. Then, we have
	{\small\begin{align*}
	&\mathbb{E}\left[\left\|\widetilde{\mathbf{x}}(k+1)\right\|^{2}\right] \le Q_{k}+ \frac{2\Delta_{1,\infty}\alpha_{0}^{2}}{\lambda_{2}^{2}\left(\overline{\mathbf{R}}\right)\beta_0^{2}(k+1)}\nonumber\\&=O\left(\frac{1}{k}\right),
	\end{align*}}
	where $Q_k$ is a term which decays faster than $(k+1)^{-1}$.
\end{Lemma}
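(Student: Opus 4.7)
The plan is to mirror the argument of Lemma~\ref{lemma-0disag-bound}, which is in fact simpler in the first-order setting because there is no gradient-approximation bias term and no auxiliary step size $c_k$. First I would derive the disagreement recursion by multiplying the compact update~\eqref{eq:update_rule} by $(\mathbf{I}-\mathbf{J})$ and using the identity $(\mathbf{I}-\mathbf{J})\mathbf{W}_{k}=\mathbf{W}_{k}-\mathbf{J}\doteq\widetilde{\mathbf{W}}_{k}$ (which holds because $\mathbf{J}\mathbf{W}_{k}=\mathbf{J}$ by the Laplacian row-sum structure), obtaining
$$\widetilde{\mathbf{x}}(k+1)=\widetilde{\mathbf{W}}_{k}\widetilde{\mathbf{x}}(k)-\alpha_{k}(\mathbf{I}-\mathbf{J})\bigl(\nabla F(\mathbf{x}(k))+\mathbf{u}(k)\bigr).$$
Applying the elementary inequality~\eqref{eq:cool_ineq_1} with $\theta_{k}=\beta_{k}\lambda_{2}(\overline{\mathbf{R}})/2$ then splits the squared norm into a contracting piece and a forcing piece.

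For the contracting piece, the spectral bound~\eqref{eq:0dis5_bound} carries over verbatim since it only uses properties of the random Laplacian and Assumption~\ref{as:3}: $\mathbb{E}[\|\widetilde{\mathbf{W}}_{k}\widetilde{\mathbf{x}}(k)\|^{2}\mid\mathcal{S}_{k}]\le(1-\beta_{k}\lambda_{2}(\overline{\mathbf{R}}))\|\widetilde{\mathbf{x}}(k)\|^{2}$. For the forcing piece, Assumption~\ref{assumption-gradient-noise} together with the uniform moment bound from Lemma~\ref{Lemma-MSS-BDD} yields $\mathbb{E}[\|\nabla F(\mathbf{x}(k))+\mathbf{u}(k)\|^{2}]\le 2\|\nabla F(\mathbf{x}(k))\|^{2}+4c_{u}q_{\infty}(N,\alpha_{0})+4N\sigma_{1}^{2}=\Delta_{1,\infty}$, exactly matching the constant in the statement of Theorem~\ref{theorem-2}. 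Combining gives the scalar recursion
$$\mathbb{E}[\|\widetilde{\mathbf{x}}(k+1)\|^{2}]\le\Bigl(1-\frac{\beta_{k}\lambda_{2}(\overline{\mathbf{R}})}{2}\Bigr)\mathbb{E}[\|\widetilde{\mathbf{x}}(k)\|^{2}]+\Bigl(1+\frac{2}{\beta_{k}\lambda_{2}(\overline{\mathbf{R}})}\Bigr)\alpha_{k}^{2}\Delta_{1,\infty}.$$

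To extract the rate, I would invoke Lemma~\ref{lemma:technical1} with $s_{k}=\beta_{k}\lambda_{2}(\overline{\mathbf{R}})/2$, $b_{k}=\alpha_{k}^{2}$, and $d_{k}=\Delta_{1,\infty}$, splitting at $m(k)=\lfloor(k-1)/2\rfloor$ as in the zeroth-order proof. With $\alpha_{k}=\alpha_{0}/(k+1)$ and $\beta_{k}=\beta_{0}/(k+1)^{1/2}$, one has $\alpha_{k}^{2}/\beta_{k}^{2}=\alpha_{0}^{2}/(\beta_{0}^{2}(k+1))$, so the dominant surviving term is
$$\Delta_{1,\infty}\,\alpha_{k}^{2}\,\frac{s_{k}+1}{s_{k}^{2}}\sim\frac{2\Delta_{1,\infty}\alpha_{0}^{2}}{\lambda_{2}^{2}(\overline{\mathbf{R}})\beta_{0}^{2}(k+1)},$$
which is precisely the leading-order expression in the lemma. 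The remaining contributions --- stretched-exponential $\exp(-\sum s_{l})$ decay of the initial condition, the head-of-sum term $\exp(-\sum_{l=m(k)}^{k}s_{l})\sum_{l=0}^{m(k)-1}(1+1/s_{l})b_{l}$, and the lower-order $\alpha_{k}^{2}\Delta_{1,\infty}$ contribution of order $1/k^{2}$ --- all decay strictly faster than $1/(k+1)$ and will be bundled into $Q_{k}$.

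The main obstacle, such as it is, lies in verifying that with $\tau=1/2$ the split point $m(k)=\lfloor(k-1)/2\rfloor$ produces a genuinely $o(1/k)$ residual: the key fact is $\sum_{l=m(k)}^{k}s_{l}\asymp\sqrt{k}$, so the exponential factor decays faster than any polynomial rate and comfortably beats the polynomially growing prefactor $\sum_{l=0}^{m(k)-1}(1+1/s_{l})b_{l}$. Once this bookkeeping is done, the claim follows, and the proof is otherwise a straightforward specialization of the zeroth-order argument with $c_{k}\equiv 1$ and $\mathbf{b}(\mathbf{x}(k))\equiv\mathbf{0}$.
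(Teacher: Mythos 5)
Your proposal is correct and follows essentially the same route as the paper: the same disagreement recursion via $\widetilde{\mathbf{W}}_k=\mathbf{W}_k-\mathbf{J}$, the same choice $\theta_k=\beta_k\lambda_2(\overline{\mathbf{R}})/2$, the same bound $\mathbb{E}[\|\mathbf{w}(k)\|^2]\le\Delta_{1,\infty}$, and the same application of Lemma~\ref{lemma:technical1} with the split at $\lfloor (k-1)/2\rfloor$ yielding the dominant $O(\alpha_k^2/(\beta_k^2 s_k^0))$-type term of order $1/(k+1)$ with the residuals absorbed into $Q_k$.
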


Lemma~\ref{lemma-disag-bound}
is important as it allows
to sufficiently tightly
bound the bias in the
gradient estimates
according to which the
global average
$\overline{\mathbf{x}}(k)$ evolves.

\noindent\begin{IEEEproof}
Proceeding as in the proof of Lemma \ref{lemma-0disag-bound} in \eqref{eq:0dis1}-\eqref{eq:0dis5_bound}, we have,
	{\small\begin{align}
	\label{eq:1dis6}
	&\mathbb{E}\left[\left\|\widetilde{\mathbf{x}}(k+1)\right\|^{2}|\mathcal{S}_{k}\right]\leq \left(1+\theta_k\right)\left(1-\beta_{k}\lambda_{2}\left(\overline{\mathbf{R}}\right)\right)\left\|\widetilde{\mathbf{x}}(k)\right\|^{2}\nonumber\\&+\left(1+\frac{1}{\theta_k}\right)\alpha_k^2\mathbb{E}\left[\left\|\mathbf{w}(k)\right\|^{2}|\mathcal{F}_{k}\right],
	\end{align}}
	where 
	{\small\begin{align}
	\label{eq:1wk_bound}
	&\mathbb{E}\left[\left\|\mathbf{w}(k)\right\|^{2}|\mathcal{S}_{k}\right] \le 2\left\|\nabla F(\mathbf{x}(k))\right\|^{2}+2\mathbb{E}\left[\left\|\mathbf{v}(k)\right\|^{2}|\mathcal{F}_{k}\right]\nonumber\\
	&\le \underbrace{2\left\|\nabla F(\mathbf{x}(k))\right\|^{2}+4c_{u}q_{\infty}(N,\alpha_0)+4N\sigma_{1}^{2}}_{\text{$\Delta_{1,\infty}$}}\nonumber\\&\Rightarrow \mathbb{E}\left[\left\|\mathbf{w}(k)\right\|^{2}\right] < \infty.
	\end{align}}
	With the above development in place, we then have,
	{\small\begin{align}
	\label{eq:1dis6.5}
	&\mathbb{E}\left[\left\|\widetilde{\mathbf{x}}(k+1)\right\|^{2}\right]\leq \left(1+\theta_k\right)\left(1-\beta_{k}\lambda_{2}\left(\overline{\mathbf{R}}\right)\right)\left\|\widetilde{\mathbf{x}}(k)\right\|^{2}\nonumber\\&+\left(1+\frac{1}{\theta_k}\right)\alpha_k^2\Delta_{1,\infty}.
	\end{align}}
	In particular, we choose $\theta(k) = \frac{\beta_{k}}{2}\lambda_{2}\left(\overline{\mathbf{R}}\right)$. From \eqref{eq:dis6.5}, we have,
	{\small\begin{align}
	\label{eq:1dis6.6}
	&\mathbb{E}\left[\left\|\widetilde{\mathbf{x}}(k+1)\right\|^{2}\right]\leq \left(1-\frac{\beta_{k}}{2}\lambda_{2}\left(\overline{\mathbf{R}}\right)\right)\mathbb{E}\left[\left\|\widetilde{\mathbf{x}}(k)\right\|^{2}\right]\nonumber\\&+\left(1+\frac{2}{\beta_{k}\lambda_{2}\left(\overline{\mathbf{R}}\right)}\right)\alpha_k^2\Delta_{1,\infty}\nonumber\\
	&= \left(1-\frac{\beta_{k}}{2}\lambda_{2}\left(\overline{\mathbf{R}}\right)\right)\mathbb{E}\left[\left\|\widetilde{\mathbf{x}}(k)\right\|^{2}\right]\nonumber\\&+\frac{2\alpha_k^2}{\lambda_{2}\left(\overline{\mathbf{R}}\right)\beta_{k}}\Delta_{1,\infty}+\alpha_k^2\Delta_{1,\infty}.
	\end{align}}
	\noindent Applying lemma \ref{lemma:technical1} to $q_k=\mathbb{E}\left[\left\|\widetilde{\mathbf{x}}(k)\right\|^{2}\right]$, $d_k=\Delta_k$, $b_k=\alpha_k^2$, and $s_k=\frac{\beta_{k}}{2}\lambda_{2}\left(\overline{\mathbf{R}}\right)$, we obtain for $m(k)= \lfloor\frac{k-1}{2}\rfloor$:
		{\small\begin{align}
		\label{eq:1dis6.7}
		&\mathbb{E}\left[\left\|\widetilde{\mathbf{x}}(k+1)\right\|^{2}\right]\leq 	 \underbrace{\exp\left(-\sum_{l=0}^{k}s(l)\right)\mathbb{E}\left[\left\|\widetilde{\mathbf{x}}(0)\right\|^{2}\right]}_{\text{$t_1$}}\nonumber\\
		&+\underbrace{\Delta_{1,\infty}\exp\left(-\sum_{m=\lfloor\frac{k-1}{2}\rfloor}^{k}s(m)\right)\sum_{l=0}^{\lfloor\frac{k-1}{2}\rfloor-1}\left(\frac{2\alpha_l^2}{\lambda_{2}\left(\overline{\mathbf{R}}\right)\beta_{l}}+\alpha_l^2\right)}_{\text{$t_2$}}\nonumber\\
		&+\underbrace{\frac{2\Delta_{1,\infty}\alpha_{0}^{2}}{\lambda_{2}^{2}\left(\overline{\mathbf{R}}\right)\beta_0^{2}(k+1)}}_{\text{$t_3$}}+\underbrace{\frac{4\Delta_{1,\infty}\alpha_{0}^{2}}{\lambda_{2}\left(\overline{\mathbf{R}}\right)\beta_0(k+1)^{3/2}}}_{\text{$t_4$}}.
		\end{align}}
		In the proof of Lemma~\ref{lemma:technical1}, the splitting in the interval $[0,k]$ was done at $\lfloor\frac{k-1}{2}\rfloor$ for ease of book keeping. 
	The division can be done at an arbitrary point.
	It is to be noted that the sequence $\{s(k)\}$ is not summable and hence terms $t_1$ and $t_2$ decay faster than $(k+1)$. Also, note that term $t_4$ decays faster than $t_3$. 
	For notational ease, henceforth we refer to $t_1+t_2+t_4=Q_{k}$, while keeping in mind that $Q_{k}$ decays faster than $(k+1)$.
	Hence, we have the disagreement given by,
	{\small\begin{align*}
	\mathbb{E}\left[\left\|\widetilde{\mathbf{x}}(k+1)\right\|^{2}\right] =O\left(\frac{1}{k}\right).
	\end{align*}}
\end{IEEEproof}
\begin{Lemma}
	\label{lemma-comm-rate}
	Consider algorithm \eqref{eqn-alg-node-i}
	and let the hypotheses of Theorem \ref{theorem-2} hold. Then, there holds:
	\[
	\mathbb{E}[\,\|\mathbf{x}_{i}(k)-\mathbf{x}^{\star}\|^2\,] = O(1/k).
	\] and
	\[
	\mathbb{E}[\,\|\mathbf{x}_{i}(k)-\mathbf{x}^{\star}\|^2\,] = O\left(\frac{1}{\mathcal{C}_{k}^{4/3-\zeta}}\right),
	\]
	where $\zeta > 0$ can be arbitrarily small, for all $i=1,\cdots,N$.
\end{Lemma}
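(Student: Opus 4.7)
The plan is to mirror the three-step argument used for the zeroth order case (Theorem~\ref{theorem-1}), but simplified because there is no gradient-estimation bias ($\mathbf{b}\equiv 0$) and no step-size $c_k$ to balance. First I would pass to the hypothetically available network average $\overline{\mathbf{x}}(k)=\frac{1}{N}\sum_i \mathbf{x}_i(k)$. Summing~\eqref{eqn-alg-node-i} over $i$ eliminates the consensus term (since $\mathbf{R}(k)\mathbf{1}=0$) and gives
\[
\overline{\mathbf{x}}(k+1)=\overline{\mathbf{x}}(k)-\frac{\alpha_k}{N}\nabla f(\overline{\mathbf{x}}(k))-\frac{\alpha_k}{N}\boldsymbol{\epsilon}(k)-\frac{\alpha_k}{N}\sum_{i=1}^{N}\mathbf{u}_i(k),
\]
where $\boldsymbol{\epsilon}(k)=\sum_{i=1}^{N}\bigl(\nabla f_i(\mathbf{x}_i(k))-\nabla f_i(\overline{\mathbf{x}}(k))\bigr)$ is the disagreement-induced bias, which satisfies $\|\boldsymbol{\epsilon}(k)\|^2\le NL^2\|\widetilde{\mathbf{x}}(k)\|^2$ by Lipschitzness of the $\nabla f_i$.

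Second, I would invoke the integral-form mean value identity $\nabla f(\overline{\mathbf{x}}(k))-\nabla f(\mathbf{x}^{\star})=\overline{\mathbf{H}}_k(\overline{\mathbf{x}}(k)-\mathbf{x}^{\star})$ with $N\mu\mathbf{I}\preceq\overline{\mathbf{H}}_k\preceq NL\,\mathbf{I}$, split $\left\|\overline{\mathbf{x}}(k+1)-\mathbf{x}^{\star}\right\|^2$ using the inequality $(a+b)^2\le(1+\theta_k)a^2+(1+1/\theta_k)b^2$ with the choice $\theta_k=\mu\alpha_k$, take conditional expectation (using Assumption~\ref{assumption-gradient-noise} to kill the cross term with $\mathbf{u}(k)$ and bound its second moment via the already established $\mathbb{E}[\|\mathbf{x}(k)\|^2]<\infty$ from Lemma~\ref{Lemma-MSS-BDD}), and arrive at a scalar recursion of the form
\[
\mathbb{E}\left[\|\overline{\mathbf{x}}(k+1)-\mathbf{x}^{\star}\|^2\right]\le \left(1-\frac{\mu\alpha_0}{k+1}\right)\mathbb{E}\left[\|\overline{\mathbf{x}}(k)-\mathbf{x}^{\star}\|^2\right]+\frac{C_1}{(k+1)^2}\,\mathbb{E}[\|\widetilde{\mathbf{x}}(k)\|^2]+\frac{C_2}{(k+1)^2}.
\]
Plugging in the $O(1/k)$ disagreement bound from Lemma~\ref{lemma-disag-bound} makes the forcing term $O(1/k^3)$ from the disagreement part and $O(1/k^2)$ from the noise part, so a standard telescoping / induction argument (as used after~\eqref{eq:opt11.5} in the zeroth order proof, or via Lemma~\ref{lemma:technical1} with $s_k=\mu\alpha_k$) delivers $\mathbb{E}[\|\overline{\mathbf{x}}(k)-\mathbf{x}^{\star}\|^2]=O(1/k)$, where the condition $\alpha_0>2/\mu$ is what ensures $1-\mu\alpha_k\ge 1-\mu\alpha_0/(k+1)$ dominates the $(k+1)^{-2}$ forcing and yields the $1/k$ rate rather than $\ln(k)/k$.

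Third, the triangle inequality $\|\mathbf{x}_i(k)-\mathbf{x}^{\star}\|^2\le 2\|\widetilde{\mathbf{x}}_i(k)\|^2+2\|\overline{\mathbf{x}}(k)-\mathbf{x}^{\star}\|^2$ combined with Lemma~\ref{lemma-disag-bound} yields the per-node bound in~\eqref{eq:th2} and, in particular, $\mathbb{E}[\|\mathbf{x}_i(k)-\mathbf{x}^{\star}\|^2]=O(1/k)$. Finally, for the communication-MSE conversion, I would compute the communication cost $\mathcal{C}_k=\mathbb{E}\sum_{t=1}^{k}\zeta_t$ directly from~\eqref{eq:time_decay}: since $\zeta_t=\zeta_0(t+1)^{-(\tau/2-\epsilon/2)}$ with $\tau=1/2$, we get $\mathcal{C}_k=O(k^{3/4+\epsilon/2})$, so inverting $k=\Theta(\mathcal{C}_k^{1/(3/4+\epsilon/2)})=\Theta(\mathcal{C}_k^{4/3-\zeta})$ for $\zeta$ arbitrarily small gives the claimed $O(1/\mathcal{C}_k^{4/3-\zeta})$ MSE-communication rate.

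The main obstacle I expect is the careful bookkeeping in the scalar recursion for $\overline{\mathbf{x}}(k)$: because $\theta_k=\mu\alpha_k\to 0$, the coefficient $1+1/\theta_k$ blows up like $k$, so the disagreement forcing term is $\alpha_k^2(1+1/\theta_k)\|\boldsymbol{\epsilon}(k)\|^2 = O(k^{-1}\cdot k^{-1})=O(k^{-2})$ only after absorbing one power of $k$ into $\theta_k^{-1}$ and using the $O(1/k)$ disagreement bound; verifying that this forcing, together with the noise-induced $O(1/k^2)$ term, propagates through the time-varying recursion to give $O(1/k)$ (and not merely $O(\ln k/k)$) is the place where the condition $\alpha_0>2/\mu$ is used, and is the one calculation that requires genuine care rather than mechanical repetition of the zeroth order argument.
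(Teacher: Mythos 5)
Your proposal follows essentially the same route as the paper: pass to the network average, bound the disagreement-induced bias $\boldsymbol{\epsilon}(k)$ via Lipschitzness and Lemma~\ref{lemma-disag-bound}, apply the $(1+\theta_k)$ splitting with $\theta_k=\mu\alpha_k$ together with Assumption~\ref{assumption-gradient-noise} and Lemma~\ref{Lemma-MSS-BDD} to obtain the contracting scalar recursion, unwind it to get $O(1/k)$, and convert via $\mathcal{C}_k=O(k^{3/4+\epsilon/2})$. The only blemish is that your displayed recursion writes the disagreement forcing coefficient as $C_1/(k+1)^2$ when $(1+1/\theta_k)\alpha_k^2=O(1/(k+1))$ actually makes it $C_1/(k+1)$ times $\mathbb{E}[\|\widetilde{\mathbf{x}}(k)\|^2]$ (i.e., $O(k^{-2})$ total, as you correctly state in your closing paragraph); this does not affect the conclusion.
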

\begin{IEEEproof}
	Denote $\overline{\mathbf{x}}(k)=\frac{1}{N}\sum_{n=1}\mathbf{x}_{i}(k)$.
	Then, we have,\\
	{\small\begin{align}
		\label{eq:1opt1}
		\overline{\mathbf{x}}(k+1) = \overline{\mathbf{x}}(k)-\alpha_{k}\left[\frac{1}{N}\sum_{i=1}^{N}\nabla f_{i}\left(\mathbf{x}_{i}(k)\right)+\underbrace{\frac{1}{N}\sum_{i=1}^{N}
			\mathbf{u}_{i}(k)}_{\text{$\overline{\mathbf{u}}(k)$}}\right]
		\end{align}}
	which implies:
	{\small\begin{align*}
	& \overline{\mathbf{x}}(k+1) = \overline{\mathbf{x}}(k)-\frac{\alpha_k}{N}\left[\sum_{i=1}^{N}\nabla f_{i}\left(\mathbf{x}_{i}(k)\right) \right.\\
	&\left. -\nabla f_{i}\left(\overline{\mathbf{x}}(k)\right)+\nabla f_{i}\left(\overline{\mathbf{x}}(k)\right)\right]-\alpha_{k}\overline{\mathbf{u}}(k).
	\end{align*}}
	where
	{\small\begin{align}
	\label{eq:1opt3}
	&\mathbf{e}(k) = N\overline{\mathbf{u}}(k)\nonumber\\&+\underbrace{\sum_{i=1}^{N}\left(\nabla f_{i}\left(\mathbf{x}_{i}(k)\right)-\nabla f_{i}\left(\overline{\mathbf{x}}(k)\right)\right)}_{\text{$\boldsymbol{\epsilon}(k)$}}.
	\end{align}}
	Proceeding as in \eqref{eq:opt4}-\eqref{eq:opt10}, with $c_{k}=1$ and $\mathbf{b}(\mathbf{x}_{i}(k))=0$, $\forall i=1,\cdots,N$, we have
	on choosing $\theta_{k}=\frac{\mu\alpha_0}{k+1}$, where $\alpha_0 > \frac{1}{\mu}$, 
	{\small\begin{align}
		\label{eq:1opt11}
		&\mathbb{E}\left[\left\|\mathbf{m}(k)\right\|^{2}\right] \leq \left(1-\frac{\mu\alpha_0}{k+1}\right)\mathbb{E}\left[\left\|\overline{\mathbf{x}}(k)-\mathbf{x}^{\ast}\right\|^{2}\right]\nonumber\\&+
		\frac{8NL^{2}\Delta_{1,\infty}\alpha_{0}^{3}}{\mu\lambda_{2}^{2}\left(\overline{\mathbf{R}}\right)\beta_0^{2}(k+1)^{2}}+\frac{2NL^{2}Q_{k}}{\mu(k+1)}\nonumber\\
		&\Rightarrow \mathbb{E}\left[\left\|\overline{\mathbf{x}}(k+1)-\mathbf{x}^{\ast}\right\|^{2}\right] \leq \left(1-\frac{\mu\alpha_0}{k+1}\right)\mathbb{E}\left[\left\|\overline{\mathbf{x}}(k)-\mathbf{x}^{\ast}\right\|^{2}\right]\nonumber\\&+\frac{8NL^{2}\Delta_{1,\infty}\alpha_{0}^{3}}{\mu\lambda_{2}^{2}\left(\overline{\mathbf{R}}\right)\beta_0^{2}(k+1)^{2}}+\frac{2NL^{2}Q_{k}}{\mu(k+1)}+2\alpha_{k}^{2}\left(c_{u}q_{\infty}(N,\alpha_0)+N\sigma_1^{2}\right)\nonumber\\
		&\Rightarrow\mathbb{E}\left[\left\|\overline{\mathbf{x}}(k+1)-\mathbf{x}^{\ast}\right\|^{2}\right] \leq \left(1-\frac{\mu\alpha_0}{k+1}\right)\mathbb{E}\left[\left\|\overline{\mathbf{x}}(k)-\mathbf{x}^{\ast}\right\|^{2}\right]\nonumber\\&+\frac{8NL^{2}\Delta_{1,\infty}\alpha_{0}^{3}}{\mu\lambda_{2}^{2}\left(\overline{\mathbf{R}}\right)\beta_0^{2}(k+1)^{2}}+2\alpha_{k}^{2}\left(c_{u}q_{\infty}(N,\alpha_0)+N\sigma_1^{2}\right)+P_{k},
		\end{align}}
	where $P_{k}$ decays faster as compared to the other terms.
	Proceeding as in \eqref{eq:dis6.7}, we have
	{\small\begin{align}
		\label{eq:1opt11.5}
		&\mathbb{E}\left[\left\|\overline{\mathbf{x}}(k+1)-\mathbf{x}^{\ast}\right\|^{2}\right]\nonumber\\
		& \leq \underbrace{\exp\left(-\mu\sum_{l=0}^{k}\alpha_{l}\right)\mathbb{E}\left[\left\|\overline{\mathbf{x}}(k)-\mathbf{x}^{\ast}\right\|^{2}\right]}_{\text{$t_6$}}\nonumber\\
		&+\underbrace{\exp\left(-\mu\sum_{m=\lfloor\frac{k-1}{2}\rfloor}^{k}\alpha_{m}\right)\sum_{l=0}^{\lfloor\frac{k-1}{2}\rfloor-1}\frac{8L^{2}\Delta_{1,\infty}\alpha_{0}^{3}}{\mu\lambda_{2}^{2}\left(\overline{\mathbf{R}}\right)\beta_0^{2}(l+1)^{2}}}_{\text{$t_7$}}\nonumber\\
		&+\underbrace{\exp\left(-\mu\sum_{m=\lfloor\frac{k-1}{2}\rfloor}^{k}\alpha_{m}\right)\sum_{l=0}^{\lfloor\frac{k-1}{2}-1}P_{l}}_{\text{$t_{10}$}}\nonumber\\
		&+\underbrace{\exp\left(-\mu\sum_{m=\lfloor\frac{k-1}{2}\rfloor}^{k}\alpha_{m}\right)\sum_{l=0}^{\lfloor\frac{k-1}{2}-1}\frac{2\alpha_{0}^{2}\left(c_{u}q_{\infty}(N,\alpha_0)+N\sigma_1^{2}\right)}{(l+1)^{2}}}_{\text{$t_{11}$}}\nonumber\\
		&+\underbrace{\frac{16NL^{2}\Delta_{1,\infty}\alpha_{0}^{2}}{\mu^2\lambda_{2}^{2}\left(\overline{\mathbf{R}}\right)\beta_0^{2}(k+1)}}_{\text{$t_{12}$}}\nonumber\\
		&+\underbrace{\frac{N(k+1)P_{k}}{\mu\alpha_0}}_{\text{$t_{14}$}}+\underbrace{\frac{4N\alpha_{0}\left(c_{u}q_{\infty}(N,\alpha_0)+N\sigma_1^{2}\right)}{\mu(k+1)}}_{\text{$t_{15}$}}.
		\end{align}}
	It is to be noted that the term $t_6$ decays as $1/k$. The terms $t_7$, $t_{10}$ and $t_{11}$ decay faster than its counterparts in the terms $t_{12}$ and $t_{15}$ respectively. We note that $Q_{l}$ also decays faster.
	Hence, the rate of decay of $\mathbb{E}\left[\left\|\overline{\mathbf{x}}(k+1)-\mathbf{x}^{\ast}\right\|^{2}\right]$ is determined by the terms $t_{12}$ and $t_{15}$. Thus, we have that, $\mathbb{E}\left[\left\|\overline{\mathbf{x}}(k+1)-\mathbf{x}^{\ast}\right\|^{2}\right]=O\left(\frac{1}{k}\right)$. For notational ease, we refer to $t_6+t_7+t_{10}+t_{11}+t_{14}= M_{k}$ from now on.
	Finally, we note that,
	{\small\begin{align}
	\label{eq:1opt12}
	&\left\|\mathbf{x}_{i}(k)-\mathbf{x}^{\ast}\right\| \le \left\|\overline{\mathbf{x}}(k)-\mathbf{x}^{\ast}\right\|+\left\|\underbrace{\mathbf{x}_{i}(k)-\overline{\mathbf{x}}(k)}_{\text{$\widetilde{\mathbf{x}}_{i}(k)$}}\right\|\nonumber\\
	&\Rightarrow\left\|\mathbf{x}_{i}(k)-\mathbf{x}^{\ast}\right\|^{2} \leq 2\left\|\widetilde{\mathbf{x}}_{i}(k)\right\|^{2}+2\left\|\overline{\mathbf{x}}(k)-\mathbf{x}^{\ast}\right\|^{2}\nonumber\\
	&\Rightarrow\mathbb{E}\left[\left\|\mathbf{x}_{i}(k)-\mathbf{x}^{\ast}\right\|^{2}\right] \le 2M_{k}+\frac{32NL^{2}\Delta_{1,\infty}\alpha_{0}^{2}}{\mu^2\lambda_{2}^{2}\left(\overline{\mathbf{R}}\right)\beta_0^{2}(k+1)}\nonumber\\&+2Q_{k}+ \frac{4\Delta_{1,\infty}\alpha_{0}^{2}}{\lambda_{2}^{2}\left(\overline{\mathbf{R}}\right)\beta_0^{2}(k+1)}\nonumber\\
	&\Rightarrow\mathbb{E}\left[\left\|\mathbf{x}_{i}(k)-\mathbf{x}^{\ast}\right\|^{2}\right] = O\left(\frac{1}{k}\right),~~\forall i.
	\end{align}}
	The communication cost is given by,
	{\small\begin{align*}
	\mathbb{E}\left[\sum_{t=1}^{k}\zeta_{t}\right]=O\left(k^{\frac{3}{4}+\frac{\epsilon}{2}}\right).
	\end{align*}}
	Thus, we achieve the communication rate to be,
	{\small\begin{align}
	\label{eq:12comm_rate}
	\mathbb{E}\left[\left\|\mathbf{x}_{i}(k)-{\mathbf{x}^{\star}}\right\|^{2}\right]=O\left(\frac{1}{\mathcal{C}_{k}^{\frac{4}{3}-\zeta}}\right).
	\end{align}}

\end{IEEEproof}
\section{Conclusion}
\label{sec:conc}
\noindent In this paper, we have developed and analyzed a novel class of methods for distributed stochastic optimization of
the zeroth and first order that are based on increasingly sparse randomized communication protocols.
We have established for both the proposed zeroth and first order methods explicit mean square error (MSE) convergence rates with respect to (appropriately defined) computational cost $C_{\mathrm{comp}}$ and communication cost $C_{\mathrm{comm}}$. Specifically,
the proposed zeroth order method achieves the $O(1/(C_{\mathrm{comm}})^{8/9-\zeta})$ MSE communication rate, which
significantly improves over the rates of existing methods, while maintaining the order-optimal
$O(1/(C_{\mathrm{comp}})^{2/3})$ MSE computational rate. Similarly, the proposed first order method achieves
the $O(1/(C_{\mathrm{comm}})^{4/3-\zeta})$ MSE communication rate, while maintaining the order-optimal
$O(1/(C_{\mathrm{comp}}))$ MSE computational rate.  Numerical examples on real data demonstrate the communication efficiency of the proposed methods.
\bibliographystyle{IEEEtran}
\bibliography{IEEEabrv,bibliographyJournal,dsprt,glrt,CentralBib}

\end{document}